\newtheorem{theorem}{Theorem}[section]
\newtheorem{lemma}[theorem]{Lemma}
\newtheorem{proposition}[theorem]{Proposition}
\newtheorem{definition}[theorem]{Definition}
\newtheorem{example}[theorem]{Example}
\newtheorem{remark}[theorem]{Remark}
\numberwithin{equation}{section}
\begin{document}

\title{Pattern generation problems arising in multiplicative integer systems}


\author{Jung-Chao Ban$^{\star}$}
\address{Department of Applied Mathematics, National Dong Hwa University, Hualien 97401, Taiwan}
\email{jcban@mail.ndhu.edu.tw}
\thanks{$^{\star}$The first author would like to thank the National Science Council, R.O.C. (Contract No. NSC 100-2115-M-259-009-MY2) and the National Center for Theoretical Sciences for partially supporting this research.}

\author{Wen-Guei Hu$^{\star\star}$}
\address{Department of Applied Mathematics, National Chiao Tung University, Hsinchu 300, Taiwan}
\email{wghu@mail.nctu.edu.tw}
\thanks{$^{\star\star}$The second author would like to thank the National Science Council, R.O.C. and
the ST Yau Center for partially supporting this research.}

\author{Song-Sun Lin$^{\dagger}$}
\address{Department of Applied Mathematics, National Chiao Tung University, Hsinchu 300, Taiwan}
\email{sslin@math.nctu.edu.tw}
\thanks{$^{\dagger}$The third author would like to thank the National Science Council, R.O.C. (Contract No. NSC 98-2115-M-009) and
the ST Yau Center for partially supporting this research.}

%

\begin{abstract}
This study investigates a multiplicative integer system by using a method that was developed for studying pattern generation problems. The entropy and the Minkowski dimensions of general multiplicative systems can thus be computed. A multi-dimensional decoupled system is investigated in three main steps. (I) Identify the admissible lattices of the system; (II) compute the density of copies of admissible lattices of the same length, and (III)
compute the number of admissible patterns on the admissible lattices.

 A coupled system can be decoupled by removing the multiplicative relation set and then performing procedures similar to those applied to a decoupled system. The admissible lattices are chosen to be the maximum graphs of different degrees which are mutually independent. The entropy can be obtained after the remaining error term is shown to approach zero as the degree of the admissible lattice tends to infinity.

\end{abstract}

\maketitle

\section{Introduction}

\hspace{0.4cm}Multiplicative integer systems have been intensively studied in recent years; see \cite{14-1,17,18,26,32,33,42,43,44} and the references therein. One of the main related issues is to compute Minkowski (box) dimension and Hausdorff dimension of such systems and to compare them. These two dimensions are equal in an additive shift. However, for most known examples of multiplicative integer systems, they are different. Since the computations of these two dimensions are difficult, effective methods for computing these dimensions for general multiplicative systems must be developed.

This investigation is motivated directly by the work of Kenyon \emph{et al.} \cite{33}, who utilized a variational method to obtain the results on

\begin{equation}\label{eqn:1.1}
\mathbb{X}_{2}^{0}=\left\{
(x_{1},x_{2},x_{3},\cdots )\in\{0,1\}^{\mathbb{N}} \mid x_{k}x_{2k}=0 \text{ for all } k\geq 1
 \right\},
\end{equation}
and also pointed out that the method fails for the system

\begin{equation}\label{eqn:1.0-2}
\mathbb{X}_{2,3}^{0}=\left\{
(x_{1},x_{2},x_{3},\cdots )\in\{0,1\}^{\mathbb{N}} \mid x_{k}x_{2k}x_{3k}=0 \text{ for all } k\geq 1
 \right\}.
\end{equation}
This work provides an approach to general multiplicative systems, including (\ref{eqn:1.1}) and (\ref{eqn:1.0-2}).

This study emphasizes the computation of entropy $h(\mathbb{X})$ of the multiplicative system $\mathbb{X}$. Let $N$ be the number of the symbols of system $\mathbb{X}$. The Minkowski dimension $\dim_{M}(\mathbb{X})$ is given by

\begin{equation}\label{eqn:1.0-3}
\dim_{M}(\mathbb{X})= \frac{1}{\log N}h(\mathbb{X}),
\end{equation}
where

\begin{equation}\label{eqn:1.0-4}
h(\mathbb{X})=\underset{m\rightarrow \infty}{\lim}\hspace{0.1cm}\frac{1}{m}\log|X_{m}|,
\end{equation}
$X_{m}$ is the set of $m$-blocks in $\mathbb{X}$ and $|X_{m}|$ is the number of $X_{m}$.

Definition (\ref{eqn:1.0-4}) is standard for symbolic dynamical systems \cite{38}, and it specifies the growth rate of $|X_{m}|$. However, $\mathbb{X}$ is not invariant under the shift map $\sigma $.
Bowen \cite{7-1} used $(n,\varepsilon )$-spanning and $(n,\varepsilon )$%
-separation sets to define the topological entropy $h_{top}^{B}(T,Z)$ for
an arbitrary set $Z$ in a topological dynamical system $(X,T)$. If $X$ is a
shift space with the shift map $\sigma _{X}$, then the topological entropy $%
h_{top}^{B}(\sigma _{X},X)$ equals the entropy $h(X)$ \cite{38}.
Hence, $h_{top}^{B}(\sigma _{\mathbb{X}},\mathbb{X})=h(\mathbb{X})$. Recently,
Feng and Huang \cite{18-1} defined several topological entropies of
subsets $Z$ in a topological dynamical system  $(X,T)$. In particular, they defined the upper capacity topological
entropy $h_{top}^{UC}(T,Z)$ and packing topological entropy
$h_{top}^{P}(T,Z)$. The variational principle for those entropies are proved
therein. For any $Z\subseteq X$, $h_{top}^{B}(T,Z)\leq
h_{top}^{P}(T,Z)\leq h_{top}^{UC}(T,Z)$ \cite{18-1}. If
$Z$ is $T$-invariant and compact, then they are coincident.

The multi-dimensional shifts of finite type have been studied intensively; see \cite{11,12,13,14,20,21,22,23,35,36,37-1,38,39,45,47,53,53-1} and the references therein. Among them, the authors studied pattern generation problems on multi-dimensional shifts of finite type and developed some efficient means of studying the generation of admissible patterns, and  then computing the topological entropy; see \cite{1,2,3,4,5,6,7,24,25,27,37}. This study shows that these methods can be used to study the multi-dimensional decoupled systems and one-dimensional coupled systems of multiplicative integers, including $\mathbb{X}_{2,3}^{0}$.

To illustrate our method, (\ref{eqn:1.1}) is investigated first.
The topological entropy $h(\mathbb{X}_{2}^{0})$ has been shown to be

\begin{equation}\label{eqn:1.2}
h(\mathbb{X}_{2}^{0})=\underset{k=1}{\overset{\infty}{\sum}}\frac{1}{2^{k+1}}\log a_{k},
\end{equation}
where $a_{k}$ is a Fibonacci number with $a_{1}=2$, $a_{2}=3$ and $a_{k+1}=a_{k}+a_{k-1}$ for all $k\geq 2$  \cite{17}. The derivation of (\ref{eqn:1.2}) is as follows. Denote by the multiplicative relation set $\mathbb{M}_{2}$ of the integers of 2-power, i.e.,

\begin{equation}\label{eqn:1.3}
\mathbb{M}_{2}=\left\{1,2,4,8, 16,32, \cdots , 2^{n},\cdots\right\}.
\end{equation}
Denote by the complementary index set $\mathcal{I}_{2}$ of $\mathbb{X}_{2}^{0}$ that contains all positive odd integers:

\begin{equation}\label{eqn:1.4}
\mathcal{I}_{2}=\left\{n\in\mathbb{N} \hspace{0.2cm} |\hspace{0.2cm} 2\nmid n\right\}=\left\{2k+1\right\}_{k=0}^{\infty}.
\end{equation}
The set of all natural number $\mathbb{N}$ can now be rearranged into

\begin{equation}\label{eqn:1.5}
\mathbb{N}=\underset{i\in\mathcal{I}_{2}}{\bigcup}i\mathbb{M}_{2},
\end{equation}
where $i\mathbb{M}_{2}=\left\{i,2i,2^{2}i,\cdots,2^{n}i,\cdots  \right\}$.
Clearly,
\begin{equation}\label{eqn:1.5-1}
i\mathbb{M}_{2}\cap j\mathbb{M}_{2}=\emptyset
\end{equation}
if $i,j\in\mathcal{I}_{2}$ and $i\neq j$.
More precisely, the right-hand side of (\ref{eqn:1.5}) can be expressed as an $\mathbb{N}\times \mathbb{N}$ table, Table 1.1.

\begin{equation*}
\begin{array}{c}
\psfrag{y}{$\mathbb{M}_{2}$}
\psfrag{z}{$\mathcal{I}_{2}$}
\includegraphics[scale=0.9]{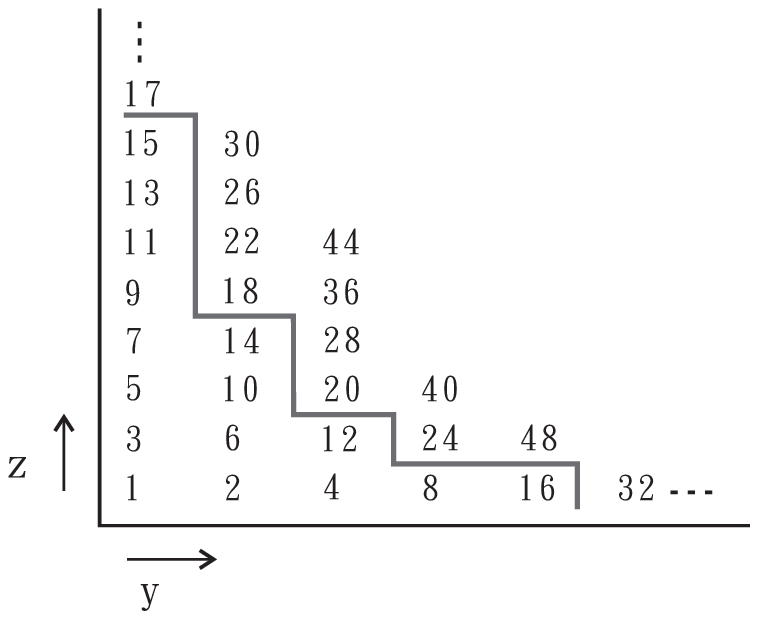}
\end{array}
\end{equation*}
\begin{equation*}
\text{Table 1.1.}
\end{equation*}

On integer lattice $\mathbb{Z}^{1}$, for $k\geq 1$, a $k$-lattice $\mathbb{Z}_{k}$ can be represented by $k$-cells as Fig. 1.1 (a) for drawing numbers or $k$-vertices as Fig. 1.1 (b) for drawing graph latter.


\begin{equation*}
\begin{array}{cccc}
\psfrag{a}{$\cdots$}
\psfrag{c}{$Z_{k}$}
\includegraphics[scale=1.0]{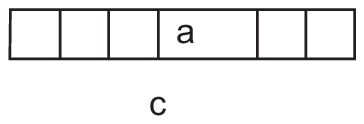}
& &&
\psfrag{c}{$Z_{k}$}
\includegraphics[scale=0.7]{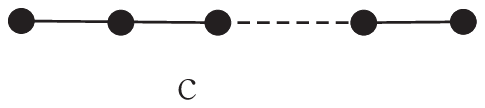}
\\
\text{Figure 1.1 (a).} & & &\text{Figure 1.1 (b).}
\end{array}
\end{equation*}
Let $M_{k}$ and $iM_{k}$ be the numbered lattices of the first $k$ elements in $\mathbb{M}_{2}$ and in $i\mathbb{M}_{2}$ on the $Z_{k}$, respectively.

\begin{equation*}
\begin{array}{c}
\psfrag{a}{$\cdots$}
\psfrag{b}{1}
\psfrag{c}{2}
\psfrag{d}{4}
\psfrag{e}{\footnotesize{$2^{k-1}$}}
\psfrag{f}{$i$}
\psfrag{g}{$2i$}
\psfrag{h}{$4i$}
\psfrag{j}{\tiny{$2^{k-1}i$}}
\psfrag{k}{$M_{k}$}
\psfrag{l}{$iM_{k}$}
\includegraphics[scale=1.6]{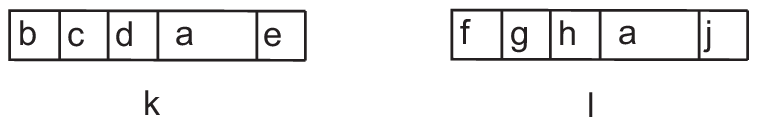}
\end{array}
\end{equation*}
\begin{equation*}
\text{Figure 1.2.}
\end{equation*}

Let

\begin{equation}\label{eqn:1.6}
\mathcal{N}(m)=\left\{k\in\mathbb{N} \hspace{0.2cm}| \hspace{0.2cm} 1\leq k\leq m\right\},
\end{equation}
be the set of natural numbers that are less than or equal to $m$. For each $n\geq 1$ and $1\leq i\leq 2^{n}$, let

\begin{equation}\label{eqn:1.6-1}
k_{n}(i)=\max\left\{k \hspace{0.2cm}| \hspace{0.2cm} i2^{k}\leq 2^{n}\right\}=\log_{2}\left\lfloor\frac{2^{n}}{i}\right\rfloor,
\end{equation}
where $\lfloor x \rfloor$ is the largest integer that is less than or equal to $x$.

Then, from Table 1.1, it is clear that

\begin{equation}\label{eqn:1.7}
\mathcal{N}(2^{n})=\underset{i\in\mathcal{I}_{2},1\leq i\leq 2^{n}}{\bigcup}iM_{k_{n}(i)}.
\end{equation}
For example, for $n=4$, from Table 1.1,

\begin{equation}\label{eqn:1.8}
\mathcal{N}(2^{4})=M_{5}\cup (3M_{3})\cup (5M_{2})\cup (7M_{2})\cup (9M_{1})\cup (11M_{1})\cup (13M_{1})\cup (15M_{1}).
\end{equation}

In terms of blank lattices, the numbers in $\mathcal{N}(2^{4})$ lie on

\begin{equation}\label{eqn:1.9}
 \text{one copy of }Z_{5},  \text{one copy of }Z_{3}, \text{two copies of }Z_{2} \text{ and } 2^{2} \text{ copies of }Z_{1}.
\end{equation}
The result of general $\mathcal{N}(2^{n})$ follows from the following proposition, which can be easily proven by mathematical induction.

\begin{proposition}
\label{proposition:1.1}
 For integers $Q\geq 2$ and $n\geq 1$,

\begin{equation}\label{eqn:1.10}
Q^{n}=(n+1)+n(Q-2)+(Q-1)^{2}\underset{k=1}{\overset{n-1}{\sum}}kQ^{n-1-k}.
\end{equation}
In particular,

\begin{equation}\label{eqn:1.11}
2^{n}=(n+1)+\underset{k=1}{\overset{n-1}{\sum}}k2^{n-1-k}.
\end{equation}

\end{proposition}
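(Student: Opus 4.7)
The plan is to prove the general identity (\ref{eqn:1.10}) by induction on $n$, as the author suggests, and then obtain (\ref{eqn:1.11}) by specializing $Q=2$, where $n(Q-2)=0$ and $(Q-1)^2=1$ make the second and third terms collapse.

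\textbf{Base cases.} First I would check $n=1$: the right-hand side reduces to $2+(Q-2)+0=Q$, matching $Q^1$. For $n=2$, the right-hand side is $3+2(Q-2)+(Q-1)^2\cdot 1\cdot Q^0=Q^2-2Q+1+2Q-1=Q^2$, as desired.

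\textbf{Inductive step.} Assume (\ref{eqn:1.10}) holds for $n$, and write $R(n)$ for its right-hand side. The plan is to compute $R(n+1)-R(n)$ and show it equals $Q^{n+1}-Q^n=(Q-1)Q^n$. The first two terms of $R$ contribute $1+(Q-2)=Q-1$, so it remains to prove
\begin{equation*}
(Q-1)^2\Bigl[\sum_{k=1}^{n} kQ^{n-k}-\sum_{k=1}^{n-1} kQ^{n-1-k}\Bigr]=(Q-1)Q^n-(Q-1).
\end{equation*}
Dividing by $Q-1$, the task reduces to evaluating the bracket, call it $T$. Reindexing the first sum via $j=k-1$, one sees
\begin{equation*}
T=\sum_{j=0}^{n-1}(j+1)Q^{n-1-j}-\sum_{k=1}^{n-1} kQ^{n-1-k}=Q^{n-1}+\sum_{j=1}^{n-1}Q^{n-1-j}=\sum_{i=0}^{n-1}Q^i=\frac{Q^n-1}{Q-1},
\end{equation*}
so $(Q-1)T=Q^n-1$, completing the inductive step.

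\textbf{Obstacle and alternative.} The only delicate point is the algebraic manipulation of the mixed sums $\sum kQ^{n-k}$ appearing on the right-hand side; the index shift $j=k-1$ is the key move that turns the difference of two such sums into a geometric series. A slightly slicker (non-inductive) alternative would be to compute $(Q-1)^2\sum_{k=1}^{n-1}kQ^{n-1-k}$ directly in closed form by the standard telescoping trick (multiply once by $Q-1$ to get a geometric series, then again), obtaining $Q^n-nQ+(n-1)$, and then adding $(n+1)+n(Q-2)$ yields $Q^n$ after cancellation; I would keep this derivation in reserve as a check.

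\textbf{Specialization.} Finally, setting $Q=2$ in (\ref{eqn:1.10}) gives (\ref{eqn:1.11}) immediately, which is the form actually needed for the decomposition (\ref{eqn:1.9}) of $\mathcal{N}(2^n)$ arising from Table 1.1.
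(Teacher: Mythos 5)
Your proof is correct: the base cases, the index shift $j=k-1$ in the inductive step, and the closed-form check $(Q-1)^2\sum_{k=1}^{n-1}kQ^{n-1-k}=Q^{n}-nQ+(n-1)$ all verify. The paper itself gives no proof, stating only that the identity ``can be easily proven by mathematical induction,'' so your argument simply supplies the details of exactly the approach the paper intends.
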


Therefore, (\ref{eqn:1.11}) states that the numbers in $\mathcal{N}(2^{n})$ are spread out on blank lattices with one copy of $Z_{n+1}$ and $2^{n-1-k}$ copies of $Z_{k}$, $1\leq k\leq n-1$. In particular, setting $n=4$ in (\ref{eqn:1.11}) yields (\ref{eqn:1.9}).

Now, consider again system $\mathbb{X}_{2}^{0}$ and the target formula (\ref{eqn:1.2}). For any $n\geq 1$, let $X_{n}$ be the set of all admissible $n$-sequences in $\mathbb{X}_{2}^{0}$:

\begin{equation}\label{eqn:1.12}
X_{n}=\left\{
(x_{1},x_{2},\cdots, x_{n})\in \{0,1\}^{\mathbb{Z}_{n}} \hspace{0.2cm}| \hspace{0.2cm} x_{k}x_{2k}=0 \text{ for all }k\geq 1 \text{ and } 2k\leq n
\right\}.
\end{equation}
Our purpose is to compute $|X_{n}|$, which is the number of elements in (\ref{eqn:1.12}).
The entropy $h(\mathbb{X}_{2}^{0})$ follows from

\begin{equation}\label{eqn:1.12-0}
h(\mathbb{X}_{2}^{0})=\underset{n\rightarrow \infty}{\lim}\hspace{0.1cm}\frac{1}{n}\log|X_{n}|.
\end{equation}

The constraint

\begin{equation}\label{eqn:1.12-1}
x_{k}x_{2k}=0
\end{equation}
in (\ref{eqn:1.12}) is the admissible condition of golden-mean shift on $\mathbb{Z}^{1}$, and it states that symbol 1 is not allowed to follow symbol 1 immediately. Then, the forbidden set on $Z_{2}$ is
$\left\{
\includegraphics[scale=0.5]{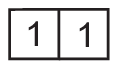}
\right\}$.
The transition matrix is

\begin{equation}\label{eqn:1.13}
G=
\left[
\begin{array}{cc}
1 & 1 \\
1 & 0
\end{array}
\right].
\end{equation}
Le $\Sigma_{k}$ be the set of all admissible patterns on $\mathbb{Z}_{k}$ with respect to (\ref{eqn:1.13}); then

\begin{equation}\label{eqn:1.14}
|\Sigma_{k}|=a_{k},
\end{equation}
which is the $k$-th Fibonacci number. Since the constraint (\ref{eqn:1.12-1}) applies to each $i\mathbb{M}_{2}$ independently for $i\in\mathcal{I}_{2}$,

\begin{equation}\label{eqn:1.15}
|X_{2^{n}}|=|\Sigma_{n+1}|\underset{k=1}{\overset{n-1}{\prod}}|\Sigma_{k}|^{2^{n-1-k}},
\end{equation}
which implies

\begin{equation}\label{eqn:1.16}
\frac{1}{2^{n}}\log|X_{2^{n}}|=\frac{1}{2^{n}}a_{n+1}+\underset{k=1}{\overset{n-1}{\sum}}\frac{1}{2^{k+1}}\log a_{k}.
\end{equation}
Hence (\ref{eqn:1.2}) follows easily from (\ref{eqn:1.16}).

By the similar argument, (\ref{eqn:1.10}) of Proposition \ref{proposition:1.1} also recovers the following results \cite{17}.

\begin{theorem}
\label{theorem:1.2}
 For any $Q\geq 2$, denote the multiplicative integers system

\begin{equation}\label{eqn:1.17}
\mathbb{X}_{Q}^{0}=\left\{
(x_{1},x_{2},\cdots )\in\{0,1\}^{\mathbb{N}} \mid x_{k}x_{Qk}=0 \text{ for all } k\geq 1
 \right\} ,
\end{equation}
then

\begin{equation}\label{eqn:1.18}
h(\mathbb{X}_{Q}^{0})=(Q-1)^{2}\underset{k=1}{\overset{\infty}{\sum}}\frac{1}{Q^{k+1}}\log a_{k}.
\end{equation}

 \end{theorem}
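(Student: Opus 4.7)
The plan is to mirror the derivation of \eqref{eqn:1.2} carried out just before, now using the general base-$Q$ identity \eqref{eqn:1.10} in place of the dyadic identity \eqref{eqn:1.11}. First I would partition the natural numbers as $\mathbb{N} = \bigcup_{i \in \mathcal{I}_Q} i\mathbb{M}_Q$, where $\mathbb{M}_Q = \{Q^j\}_{j\geq 0}$ and $\mathcal{I}_Q = \{n \in \mathbb{N} : Q \nmid n\}$; the orbits $i\mathbb{M}_Q$ are pairwise disjoint, and the constraint $x_k x_{Qk}=0$ couples only elements inside one orbit. Hence the admissibility problem decouples as a product over orbits, and on each orbit it reduces to the one-dimensional golden-mean shift with transition matrix \eqref{eqn:1.13}, so an orbit of length $k$ contributes exactly $|\Sigma_k| = a_k$ admissible patterns.

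Next I would count, for each $n$, how many orbits of each length contribute to $\mathcal{N}(Q^n)$. Using $k_n(i) = \lfloor \log_Q(Q^n/i) \rfloor$, an elementary case analysis shows that $\mathcal{N}(Q^n)$ is the disjoint union of one copy of $Z_{n+1}$ (from $i=1$), exactly $Q-2$ copies of $Z_n$ (from $i \in \{2,\dots,Q-1\}$), and, for each $1 \leq k \leq n-1$, exactly $(Q-1)^2 Q^{n-1-k}$ copies of $Z_k$ (from the elements of $\mathcal{I}_Q$ in the interval $(Q^{n-k}, Q^{n-k+1}]$). Proposition~\ref{proposition:1.1} is the consistency check that these lengths sum to $Q^n$. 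Combining this with the orbit independence yields
\begin{equation*}
|X_{Q^n}| = a_{n+1}\cdot a_n^{\,Q-2}\cdot\prod_{k=1}^{n-1} a_k^{(Q-1)^2 Q^{n-1-k}}.
\end{equation*}

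Taking logarithms, dividing by $Q^n$, and using $\log a_k = O(k)$ to absorb the boundary terms $\frac{1}{Q^n}\log a_{n+1}$ and $\frac{Q-2}{Q^n}\log a_n$, one obtains
\begin{equation*}
\lim_{n\to\infty}\frac{1}{Q^n}\log|X_{Q^n}| = (Q-1)^2\sum_{k=1}^{\infty}\frac{\log a_k}{Q^{k+1}},
\end{equation*}
which is precisely \eqref{eqn:1.18}.

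The main technical point I would watch is the passage from the subsequence $m=Q^n$ to arbitrary $m$ in the definition \eqref{eqn:1.0-4}. Since $|X_m|$ is nondecreasing (one can extend any admissible block by a trailing $0$), a naive sandwich between $Q^{n-1}$ and $Q^n$ only gives bounds of the form $[L/Q,\,QL]$, which is not tight. The fix, and the step I expect to require the most care, is the analogous refined decomposition for arbitrary $m$: writing $|X_m| = \prod_{i\in\mathcal{I}_Q,\,i\leq m} a_{\lfloor \log_Q(m/i)\rfloor+1}$, grouping the $i$ by the value of $\lfloor \log_Q(m/i)\rfloor$, and verifying that the resulting expression for $\frac{1}{m}\log|X_m|$ converges to the same series as $m \to \infty$ through all integers. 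This is the direct analogue of the step from \eqref{eqn:1.16} to \eqref{eqn:1.2} for the case $Q=2$.
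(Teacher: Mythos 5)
Your proposal is correct and follows essentially the same route as the paper, which obtains Theorem \ref{theorem:1.2} by ``the similar argument'': the decomposition of $\mathcal{N}(Q^{n})$ into independent orbits $iM_{k_{n}(i)}$, counted via (\ref{eqn:1.10}) of Proposition \ref{proposition:1.1}, combined with the golden-mean count $|\Sigma_{k}|=a_{k}$ on each orbit, exactly as in the displayed derivation of (\ref{eqn:1.2}) for $Q=2$. Your extra care in passing from the subsequence $m=Q^{n}$ to arbitrary $m$ in (\ref{eqn:1.0-4}) is a point the paper glosses over, but it does not change the method.
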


Consideration of the above reveals the following three main parts of our study of $\mathbb{X}_{2}^{0}$.

\begin{enumerate}
\item[(I)] Identify the numbered lattice $M_{k}$ and the admissible blank lattice $Z_{k}$ from the given system; see Figs. 1.1 and 1.2.

\item[(II)] Compute the numbers of copies of independent admissible lattices of the same length; see formulae (\ref{eqn:1.10}) and (\ref{eqn:1.11}).

\item[(III)] Determine the set of all admissible patterns $\Sigma_{k}$, which can be generated on $Z_{k}$, and compute the number of $|\Sigma_{k}|$.

\end{enumerate}

Notably, step (III) in the  study of $\mathbb{X}_{2}^{0}$ is the classical one-dimensional pattern generation problem; see \cite{38}.

Based on the above observations, the rest of this paper will consider the following two classes of systems.

\begin{enumerate}
\item[(i)] Multi-dimensional decoupled systems like

\begin{equation}\label{eqn:1.19}
\mathbb{X}_{2,3}^{0}=\left\{
(x_{1},x_{2},x_{3},\cdots )\in\{0,1\}^{\mathbb{N}} \mid x_{k}x_{2k}x_{3k}=0 \text{ for all } k\geq 1
 \right\}.
\end{equation}

\item[(ii)] One-dimensional coupled systems like

\begin{equation}\label{eqn:1.20}
\mathbb{X}_{2}^{A}=\left\{
(x_{1},x_{2},\cdots )\in\Sigma_{A} \mid x_{k}x_{2k}=0 \text{ for all } k\geq 1
 \right\},
\end{equation}
i.e.,
\begin{equation}\label{eqn:1.21}
\mathbb{X}_{2}^{A}=\mathbb{X}_{2}^{0}\cap \Sigma_{A},
\end{equation}
where $\Sigma_{A}$ is a proper additive shift of finite type.
\end{enumerate}

First, consider multi-dimensional decoupled systems. Let

\begin{equation}\label{eqn:1.20-1}
1<\gamma_{1}<\gamma_{2}<\cdots <\gamma_{d}
\end{equation}
be natural numbers, $d\geq 2$, such that $\gamma_{i}$ and $\gamma_{j}$ are relatively prime for all $i<j$, i.e.,

\begin{equation}\label{eqn:1.20-2}
(\gamma_{i},\gamma_{j})=1
\end{equation}
for all $1\leq i < j \leq d$, where $(a,b)$ is the greatest common divisor of natural numbers $a$ and $b$.

Denote by

\begin{equation}\label{eqn:1.20-3}
\Gamma \equiv \Gamma_{d}=\{\gamma_{1},\gamma_{2},\cdots , \gamma_{d}\}
\end{equation}
and

\begin{equation}\label{eqn:1.20-4}
\begin{array}{rl}
\mathbb{X}_{\Gamma}^{0} \equiv  & \mathbb{X}_{\gamma_{1},\gamma_{2},\cdots, \gamma_{d}}^{0} \\
&\\
 =& \left\{
(x_{1},x_{2},x_{3},\cdots )\in\{0,1\}^{\mathbb{N}} \mid x_{k}x_{\gamma_{1}k}x_{\gamma_{2}k}\cdots x_{\gamma_{d}k}=0 \text{ for all } k\geq 1
 \right\}.
 \end{array}
\end{equation}
Let
\begin{equation}\label{eqn:2.37}
\begin{array}{rl}
\mathbb{M}_{\Gamma}\equiv & \left\{ \gamma_{1}^{m_{1}}\gamma_{2}^{m_{2}}\cdots\gamma_{d}^{m_{d}} \hspace{0.2cm}|\hspace{0.2cm} m_{j}\geq 0    \right\} \\
& \\
\equiv & \left\{ q_{k}\right\}_{k=1}^{\infty}
\end{array}
\end{equation}
with $q_{k}<q_{j}$ if $k< j$.

Then, (\ref{eqn:2.37}) defines a sequence of $d$-dimensional numbered lattices $M_{k}$ of $k$ cells. The blank lattices $L_{k}$ are defined analogously; see (\ref{eqn:2.2}) and (\ref{eqn:2.39-10}). The complementary index set $\mathcal{I}_{\Gamma}$ of $\mathbb{M}_{\Gamma}$ is defined by

\begin{equation}\label{eqn:2.38}
\mathcal{I}_{\Gamma}=\left\{ n\in \mathbb{N}\hspace{0.2cm}|\hspace{0.2cm} \gamma_{j}\nmid n, 1\leq j\leq d    \right\}.
\end{equation}
Hence,

\begin{equation}\label{eqn:2.39}
\mathbb{N}= \underset{i\in \mathcal{I}_{\Gamma}}{\bigcup} i \mathbb{M}_{\Gamma}.
\end{equation}

The following theorem for multi-dimensional decoupled system is proven in Theorem \ref{theorem:2.10}.
\begin{theorem}
\label{theorem:1.10}
Let $\Gamma=\left\{ \gamma_{1},\gamma_{2},\cdots, \gamma_{d} \right\}$ satisfy (\ref{eqn:1.20-1}) and (\ref{eqn:1.20-2}). Then the entropy of $\mathbb{X}_{\Gamma}^{0}$ is given by

\begin{equation}\label{eqn:1.20-5}
h(\mathbb{X}_{\Gamma}^{0})=\underset{k=1}{\overset{\infty}{\sum}}\beta_{\Gamma} \left( \frac{1}{q_{k}}-\frac{1}{q_{k+1}}\right)\log|\Sigma_{k}|,
\end{equation}
where

\begin{equation}\label{eqn:1.20-6}
\beta_{\Gamma}=\frac{\sharp\left(\mathcal{I}_{\Gamma}\cap \left[1,\gamma_{1}\gamma_{2}\cdots \gamma_{d} \right]\right)}{\gamma_{1}\gamma_{2}\cdots \gamma_{d}}.
\end{equation}

\end{theorem}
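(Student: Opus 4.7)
The plan is to adapt the three-step decoupling used for $\mathbb{X}_2^0$, with the complementary index set $\mathcal{I}_\Gamma$ and the multiplicative semigroup $\mathbb{M}_\Gamma=\{q_k\}_{k=1}^{\infty}$ playing the roles of $\mathcal{I}_2$ and $\mathbb{M}_2$. Step (I) is encoded in the decomposition $\mathbb{N}=\bigcup_{i\in\mathcal{I}_\Gamma} i\mathbb{M}_\Gamma$ of (\ref{eqn:2.39}); the pairwise coprimality (\ref{eqn:1.20-2}) guarantees uniqueness of the factorization $n=i\gamma_1^{m_1}\cdots\gamma_d^{m_d}$ with $i\in\mathcal{I}_\Gamma$, so this union is actually disjoint. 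The constraint $x_k x_{\gamma_1 k}\cdots x_{\gamma_d k}=0$ only couples entries inside a single orbit $i\mathbb{M}_\Gamma$, so admissibility decouples across orbits, and the restriction of $\mathbb{X}_\Gamma^{0}$ to $i\mathbb{M}_\Gamma\cap\mathcal{N}(m)$ is a copy of the pattern generation problem on the blank lattice $L_{k_m(i)}$, where $k_m(i)=\max\{k:iq_k\leq m\}$. Grouping orbits by $k_m(i)$ gives the product formula
\[
|X_m|=\prod_{k=1}^{k_m(1)}|\Sigma_k|^{c_{m,k}},\qquad c_{m,k}=\#\bigl\{i\in\mathcal{I}_\Gamma:m/q_{k+1}<i\leq m/q_k\bigr\},
\]
with the convention $q_{k_m(1)+1}=\infty$ so that $c_{m,k_m(1)}\geq 1$ from the orbit $i=1$.

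Step (II) is the density estimate for $\mathcal{I}_\Gamma$: the Chinese remainder theorem and inclusion–exclusion applied to the pairwise coprime moduli $\gamma_1,\ldots,\gamma_d$ give $\#(\mathcal{I}_\Gamma\cap[1,N])=\beta_\Gamma N+O(1)$ with
\[
\beta_\Gamma=\prod_{j=1}^{d}\Bigl(1-\tfrac{1}{\gamma_j}\Bigr)=\frac{\#(\mathcal{I}_\Gamma\cap[1,\gamma_1\cdots\gamma_d])}{\gamma_1\cdots\gamma_d},
\]
matching (\ref{eqn:1.20-6}); the implied constant depends only on $\Gamma$. Applying this estimate to the interval $(m/q_{k+1},m/q_k]$ yields $c_{m,k}/m=\beta_\Gamma(1/q_k-1/q_{k+1})+O(1/m)$. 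Step (III), the evaluation of $|\Sigma_k|$ on $L_k$, is handled by the pattern generation machinery developed earlier in the paper and enters the final formula only symbolically.

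Taking $\tfrac{1}{m}\log$ yields $\tfrac{1}{m}\log|X_m|=\sum_{k=1}^{k_m(1)}(c_{m,k}/m)\log|\Sigma_k|$, and the main obstacle is to exchange the limit $m\to\infty$ with the sum since the number of nonzero terms $k_m(1)$ itself tends to infinity. The key is the crude uniform bound $c_{m,k}\leq m/q_k$, which gives the dominating estimate $(c_{m,k}/m)\log|\Sigma_k|\leq (\log|\Sigma_k|)/q_k$ that is independent of $m$. Since $|\Sigma_k|\leq 2^k$ while $q_k$ grows at least as fast as $\exp(c_\Gamma k^{1/d})$ (because $\mathbb{M}_\Gamma$ corresponds bijectively to $\mathbb{Z}_{\geq 0}^d$ via exponents, so the counting function of $\mathbb{M}_\Gamma$ in $[1,N]$ is $O((\log N)^d)$), the series $\sum_k(\log|\Sigma_k|)/q_k$ converges. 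Given $\varepsilon>0$, choose $K$ so that this tail is smaller than $\varepsilon$; then use step (II) to pass to the limit in the first $K$ terms and bound the tails of both sides by $\varepsilon$. Finally, the passage from the subsequence $m=q_n$ to all $m\to\infty$ is handled by the monotonicity $|X_m|\leq|X_{m+1}|$ (every admissible $m$-word extends by $0$) together with the uniform convergence above, producing (\ref{eqn:1.20-5}).
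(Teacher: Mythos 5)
Your proof is correct and follows essentially the same route as the paper's own argument (Theorem 2.6): the disjoint orbit decomposition $\mathbb{N}=\bigcup_{i\in\mathcal{I}_{\Gamma}}i\mathbb{M}_{\Gamma}$, the interval count $I_{\Gamma}(k;n)=\left(n/q_{k+1},\,n/q_{k}\right]\cap\mathcal{I}_{\Gamma}$ giving the density $\beta_{\Gamma}\left(1/q_{k}-1/q_{k+1}\right)$, and the product formula $|X_{\Gamma;n}|=\prod_{k}|\Sigma_{k}|^{\alpha_{\Gamma}(k;n)}$. Your explicit dominated-convergence justification of the limit/sum interchange (via $c_{m,k}\leq m/q_{k}$, $|\Sigma_{k}|\leq 2^{k}$ and $q_{k}\geq e^{c_{\Gamma}k^{1/d}}$) fills in a step the paper passes over with ``the result follows immediately.''
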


Finally, consider one-dimensional coupled system:
\begin{equation}\label{eqn:1.20-7}
\mathbb{X}_{Q}^{A}\equiv \mathbb{X}_{Q}^{0} \cap \Sigma_{A},
\end{equation}
where $\Sigma_{A}$ is a shift of finite type generated by transition matrix $A$.

Denote by $L_{Q;k}$  the degree $k$ blank lattice of the admissible numbered lattice $M_{k}(l)$; see Fig. 3.2 for $Q=2$ and Figs. 3.7 and 3.8 for $Q=3$.
The following theorem are proven by Theorems 3.7 and \ref{theorem:3.7}.

\begin{theorem}
\label{theorem:1.11}
For any $Q\geq 2$ and $k\geq 2$,

\begin{equation}\label{eqn:1.20-8}
\frac{Q-1}{Q\left(Q^{k}-1\right)}\log|\Sigma_{Q;k}|\leq h(\mathbb{X}_{Q}^{A})\leq\frac{Q-1}{Q\left(Q^{k}-1\right)}\left(\log|\Sigma_{Q;k}|+k\log 2\right),
\end{equation}
and

\begin{equation}\label{eqn:1.20-9}
h(\mathbb{X}_{Q}^{A}) = \underset{k\rightarrow \infty}{\lim}\frac{Q-1}{Q\left(Q^{k}-1\right)}\log|\Sigma_{Q;k}|,
\end{equation}
where $\Sigma_{Q;k}$ is the set of all admissible patterns on $L_{Q;k}$.
\end{theorem}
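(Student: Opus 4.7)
The plan is to establish both inequalities in (\ref{eqn:1.20-8}) for every fixed $k\geq 2$, after which (\ref{eqn:1.20-9}) follows immediately by sandwiching, since the gap $\frac{(Q-1)k\log 2}{Q(Q^k-1)}$ between the two bounds tends to $0$ as $k\to\infty$. The overall strategy mirrors the three-step program (I)--(III) in the introduction: identify admissible sublattices, count how many copies fit inside $\mathcal{N}(N)=\{1,\ldots,N\}$, and count admissible patterns on each copy. The new difficulty compared to the decoupled case is that the additive shift $\Sigma_A$ couples cells belonging to distinct multiplicative chains $i\mathbb{M}_Q$; the relevant sublattices must therefore be enlarged until they absorb those couplings, and this is exactly what the maximum independent degree-$k$ blank lattices $L_{Q;k}$ are designed to do.

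For the lower bound, I would place as many pairwise disjoint copies $L_{Q;k}(l)$ into $\mathcal{N}(N)$ as possible, with $l$ ranging over admissible indices in $\mathcal{I}_Q$. By an analogue of Proposition~\ref{proposition:1.1} adapted to the coupled setting, the density of such disjoint copies approaches $\frac{Q-1}{Q(Q^k-1)}$ as $N\to\infty$. Since distinct copies share no multiplicative relation and no nearest-neighbour additive relation of $\Sigma_A$, any assignment of one of the $|\Sigma_{Q;k}|$ admissible patterns on each copy produces an admissible global pattern (filling the residual cells with the symbol $0$, which is admissible both for $\mathbb{X}_Q^0$ and for $\Sigma_A$). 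This yields
\begin{equation*}
|X_N|\geq |\Sigma_{Q;k}|^{(1+o(1))\frac{Q-1}{Q(Q^k-1)}N},
\end{equation*}
and dividing by $N$ and letting $N\to\infty$ gives the lower bound in (\ref{eqn:1.20-8}).

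For the upper bound, every admissible $x\in X_N$ restricts to a pattern of $\Sigma_{Q;k}$ on each copy of $L_{Q;k}$, contributing at most $|\Sigma_{Q;k}|$ per copy. In addition, at most $k$ cells per copy lie on the boundary layer where the additive constraint links one copy of $L_{Q;k}$ to the next; since the symbol set is $\{0,1\}$, those cells contribute at most $2^k$ choices per block. Combined, this produces
\begin{equation*}
|X_N|\leq \bigl(|\Sigma_{Q;k}|\cdot 2^k\bigr)^{(1+o(1))\frac{Q-1}{Q(Q^k-1)}N},
\end{equation*}
which is the upper bound. The main obstacle is justifying the combinatorial claim that only $O(k)$ boundary cells per block suffice: this rests on choosing $L_{Q;k}$ as a maximum graph of degree $k$ (as indicated in Figs.~3.2, 3.7, 3.8) so that the multiplicative and additive couplings between distinct copies are confined to a thin boundary of thickness growing only linearly in $k$. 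Without this maximality property, additive leakage across multiplicative chains would contaminate the entropy count and prevent the two bounds from pinching together in the limit.
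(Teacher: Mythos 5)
Your proposal is correct and follows essentially the same route as the paper: the lower bound by packing mutually independent copies of the degree-$k$ lattice with asymptotic density $\frac{Q-1}{Q(Q^{k}-1)}$ and filling the remaining cells with $0$ (Lemmas 3.4--3.5 and (3.26)), the upper bound by charging the $k$ unused cells per copy (from $|L_{Q;k}|=\frac{Q(Q^{k}-1)}{Q-1}-k$, Lemma 3.11) at most $\log 2$ each (Lemma 3.6), and the limit by squeezing. The only cosmetic difference is that you describe the leftover cells as a boundary layer, whereas the paper counts them directly as the complement of the union of copies; the counts agree.
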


After we completed our study of (i) and (ii), we became aware of the work of Peres \emph{et al.} \cite{43} on (\ref{eqn:1.19}). These authors obtained the same results as ours for multi-dimensional system (i). Our methods for studying (i) differ from theirs by using the results from an investigation of pattern generation problems, and involving the three specified steps (I), (II) and (III). Moreover, a modification of these procedures enables us to study the one-dimensional coupled system (ii).

For the multi-dimensional coupled system like

\begin{equation}\label{eqn:1.21-1}
\begin{array}{rl}
\mathbb{X}_{2,3}^{A}= & \left\{
(x_{1},x_{2},\cdots )\in\Sigma_{A} \mid x_{k}x_{2k}x_{3k}=0 \text{ for all } k\geq 1
 \right\} \\
 & \\
 = &  \mathbb{X}_{2,3}^{0}\cap \Sigma_{A},
 \end{array}
\end{equation}
which is much more delicate. The dimension of $\Sigma_{A}$ is one and $\mathbb{X}_{2,3}^{0}$ is two. Difference of the dimensions induces an intrinsic difficulty
to decouple the system effectively. The problem of (\ref{eqn:1.21-1}) is still not solved by using our method which works well in studying one-dimensional coupled system (\ref{eqn:1.21}). See Section 4 for further discussion.

The rest of this paper is arranged as follows. Section 2 studies multi-dimensional systems. Section 3 studies one-dimensional coupled systems.
Section 4 studies multi-dimensional coupled systems.

\section{Multi-dimensional systems}

\hspace{0.4cm}This section concerns multi-dimensional systems. For simplicity, $\mathbb{X}_{2,3}^{0}$ is considered first. Recall that

\begin{equation}\label{eqn:2.1}
\mathbb{X}_{2,3}^{0}=\left\{
(x_{1},x_{2},x_{3},\cdots )\in\{0,1\}^{\mathbb{N}} \mid x_{k}x_{2k}x_{3k}=0 \text{ for all } k\geq 1
 \right\}.
\end{equation}

Firstly, the admissible numbered and blank lattices, determined by the constraint

\begin{equation}\label{eqn:2.2}
x_{k}x_{2k}x_{3k}=0,
\end{equation}
must be identified in $\mathbb{X}_{2,3}^{0}$.

Let $\mathbb{M}_{2,3}$ be the set of the numbers that are multiples of 2-power and 3-power:

 \begin{equation}\label{eqn:2.3}
\mathbb{M}_{2,3}=\left\{2^{k}3^{l} \hspace{0.2cm}|\hspace{0.2cm} k,l\geq 0\right\}.
\end{equation}
$\mathbb{M}_{2,3}$ can be expressed as follows.

\begin{equation*}
\begin{array}{cccc}
\includegraphics[scale=0.8]{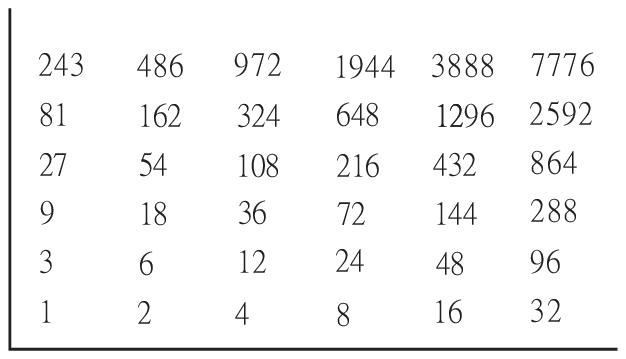} && &
\psfrag{1}{$q_{1}$}
\psfrag{2}{$q_{2}$}
\psfrag{3}{$q_{4}$}
\psfrag{4}{$q_{6}$}
\psfrag{5}{$q_{9}$}
\psfrag{7}{$q_{3}$}
\psfrag{8}{$q_{5}$}
\psfrag{9}{$q_{8}$}
\psfrag{a}{$q_{11}$}
\psfrag{b}{$q_{15}$}
\psfrag{d}{$q_{7}$}
\psfrag{e}{$q_{10}$}
\psfrag{f}{$q_{14}$}
\psfrag{g}{$q_{18}$}
\psfrag{h}{$q_{23}$}
\psfrag{k}{$q_{12}$}
\psfrag{l}{$q_{16}$}
\psfrag{m}{$q_{21}$}
\psfrag{n}{$q_{26}$}
\psfrag{o}{$q_{32}$}
\psfrag{q}{$q_{19}$}
\psfrag{r}{$q_{24}$}
\psfrag{s}{$q_{30}$}
\psfrag{t}{$q_{36}$}
\psfrag{u}{$q_{43}$}
\psfrag{v}{$q_{27}$}
\psfrag{w}{$q_{33}$}
\psfrag{x}{$q_{40}$}
\psfrag{y}{$q_{47}$}
\psfrag{z}{$q_{55}$}
\psfrag{A}{$q_{13}$}
\psfrag{B}{$q_{20}$}
\psfrag{C}{$q_{29}$}
\psfrag{D}{$q_{39}$}
\psfrag{E}{$q_{51}$}
\psfrag{F}{$q_{64}$}
\includegraphics[scale=0.8]{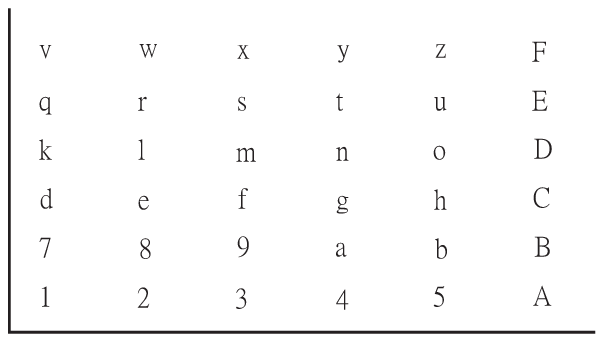}
\end{array}
\end{equation*}
\begin{equation*}
\mathbb{M}_{2,3}
\end{equation*}
\begin{equation*}
\text{Table 2.1.}
\end{equation*}

Since $\mathbb{M}_{2,3}$ inherits the natural ordering of natural numbers, the $k$-th number in $\mathbb{M}_{2,3}$ can be denoted by $q_{k}$, $k\in\mathbb{N}$. It seems that no known formula relates $2^{m}3^{n}$ to $q_{i}$ explicitly \cite{50}.

From Table 2.1, for any
$k\geq 1$, the L-shaped $k$-cell numbered lattice $M_{k}$ that contains $\{q_{1},q_{2},\cdots, q_{k}\}$ can be defined, and $k$-cell blank lattice $L_{k}$ can also be defined by deleting the numbers of $M_{k}$.
Indeed, for $k\geq 1$,

\begin{equation}\label{eqn:2.2}
L_{k}=\left\{(i,j)\in \mathbb{Z}^{2} \hspace{0.1cm} \mid \hspace{0.1cm} 2^{i}3^{j}\leq q_{k} \text{ for } i,j\geq 0\right\}.
\end{equation}
Fig. 2.1 presents the first eight numbered lattices $M_{k}$. Notably, the number $q_{k}$ in the bold square is the largest number in $M_{k}$.

\begin{equation*}
\begin{array}{llll}
\psfrag{a}{$M_{1}$ }
\includegraphics[scale=0.8]{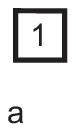} & \hspace{0.3cm}\psfrag{a}{$M_{2}$ }
\includegraphics[scale=0.8]{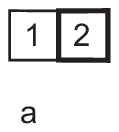}  & \hspace{0.3cm}\psfrag{a}{$M_{3}$ }
\includegraphics[scale=0.8]{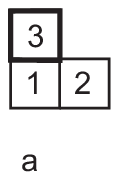}  &\hspace{0.3cm} \psfrag{a}{$M_{4}$ }
\includegraphics[scale=0.8]{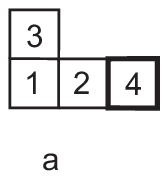}\\
& &　& \\
\psfrag{a}{$M_{5}$ }
\includegraphics[scale=0.8]{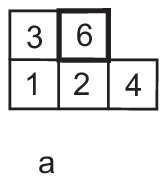} & \hspace{0.3cm}\psfrag{a}{$M_{6}$ }
\includegraphics[scale=0.8]{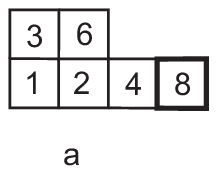}  &\hspace{0.3cm} \psfrag{a}{$M_{7}$ }
\includegraphics[scale=0.8]{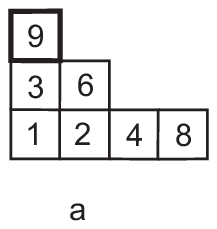}  &\hspace{0.3cm} \psfrag{a}{$M_{8}$ }
\includegraphics[scale=0.8]{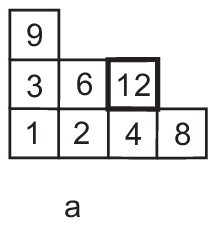}\\
\end{array}
\end{equation*}
\begin{equation*}
\text{Figure 2.1.}
\end{equation*}
In contrast to $\mathbb{X}_{2}^{0}$, all lattices $M_{k}$ and $L_{k}$ are now two-dimensional. Therefore, the system $\mathbb{X}_{2,3}^{0}$ can be regarded as a two-dimensional system; see \cite{3,4,5,6,37}.

Now, the complementary index set $\mathcal{I}_{2,3}$ of $\mathbb{X}_{2,3}^{0}$ is the set of all natural numbers that cannot be divided by two and three:

 \begin{equation}\label{eqn:2.4}
\mathcal{I}_{2,3} =  \left\{ n\in\mathbb{N} \hspace{0.2cm}|\hspace{0.2cm} 2\nmid n \text{ and } 3\nmid n \right\}
= \left\{6k+1,6k+5\right\}_{k=0}^{\infty}.
\end{equation}

The set $\mathbb{N}$ of natural numbers can be rearranged into the first octant of three-dimensional space as

 \begin{equation}\label{eqn:2.5}
\mathbb{N}=\underset{i\in\mathcal{I}_{2,3}}{\bigcup}i\mathbb{M}_{2,3},
\end{equation}
and for $i,j\in\mathcal{I}_{2,3}$ with $i\neq j$,

 \begin{equation}\label{eqn:2.6}
i\mathbb{M}_{2,3}\cap j\mathbb{M}_{2,3}=\emptyset;
\end{equation}
see Table 2.2.

\begin{equation*}
\begin{array}{c}
\psfrag{a}{$2^{m}$ }
\psfrag{b}{$\mathcal{I}_{2,3}$ }
\psfrag{c}{$3^{n}$ }
\includegraphics[scale=0.7]{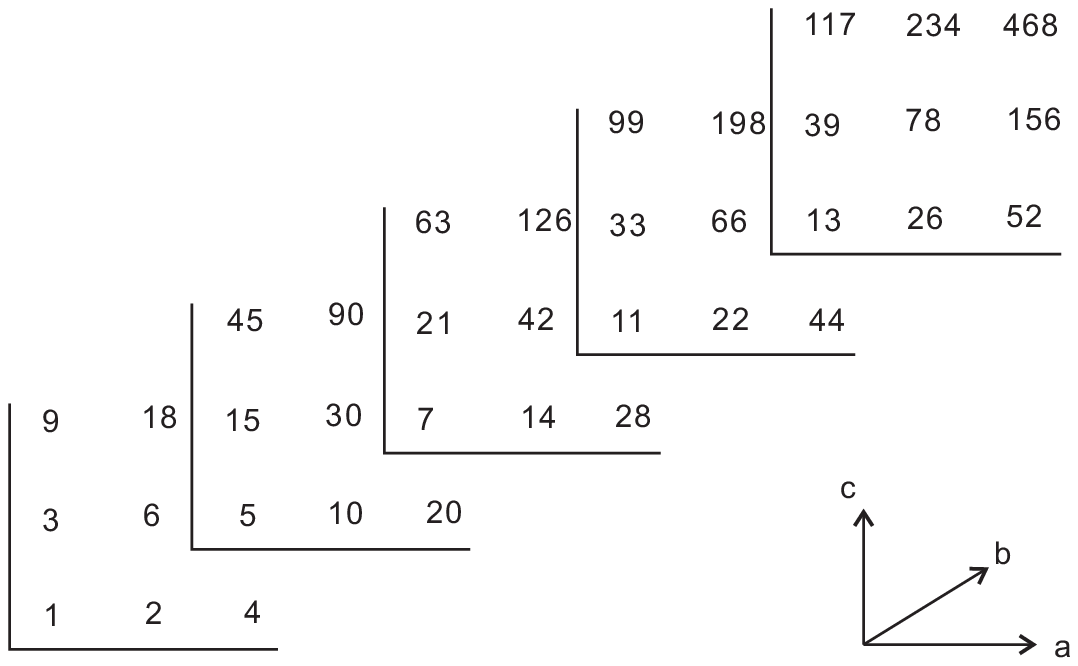}
\end{array}
\end{equation*}
\begin{equation*}
\text{Table 2.2.}
\end{equation*}

Therefore, for any $n\in\mathbb{N}$, there exists a unique $i=i(n)$ such that $n\in i\mathbb{M}_{2,3}$.

Next, proceed to step (II): compute the numbers of copies of $M_{k}$ for a given $\mathcal{N}(m)$, defined in (\ref{eqn:1.6}).

Let
\begin{equation}\label{eqn:2.7}
q_{K}=2^{m}3^{n}\in\mathbb{M}_{2,3}
\end{equation}
for some $K\geq 1$. Denote by

\begin{equation}\label{eqn:2.8}
J_{K}=\left\{ i\in\mathcal{I}_{2,3} \hspace{0.2cm}|\hspace{0.2cm} 1\leq i\leq q_{K}\right\}.
\end{equation}
Define

\begin{equation}\label{eqn:2.9}
q_{K}(i)=\max\left\{q\in\mathbb{M}_{2,3} \hspace{0.2cm}|\hspace{0.2cm} iq\leq q_{K}\right\},
\end{equation}
then $q_{K}(i)\geq 1$ for any $i\in J_{K}$.

Denote by the set

\begin{equation}\label{eqn:2.11}
I_{k}(K)=\left\{  i\in J_{K}  \hspace{0.2cm}|\hspace{0.2cm} q_{K}(i)=q_{k} \right\}
\end{equation}
for $1\leq k\leq K$. Clearly,
if $q_{K}(i)=q_{k}$, then
\begin{equation*}
iq_{k}\leq q_{K} < iq_{k+1}.
\end{equation*}
Therefore, a parallel result of (\ref{eqn:1.7}) and (\ref{eqn:1.10}) is as follows.

\begin{equation}\label{eqn:2.10}
\mathcal{N}(q_{K})=\underset{i\in J_{K}}{\bigcup}iM_{q_{K}(i)}=\underset{i\in I_{K}(k)}{\bigcup} i M_{k}
\end{equation}
with
\begin{equation}\label{eqn:2.12}
I_{k}(K)=\left( \frac{q_{K}}{q_{k+1}}, \frac{q_{K}}{q_{k}}\right]\cap \mathcal{I}_{2,3}.
\end{equation}

The following example illustrates (\ref{eqn:2.10}) and (\ref{eqn:2.12}).

\begin{example}
\label{example:2.4}
For $q_{14}=36$, it is easy to verify

\begin{equation*}
\left\{
\begin{array}{l}
I_{1}(14)=\{19,23,25,29,31,35\} \\ \\
I_{2}(14)=\{13,17\} \\ \\
I_{3}(14)=\{11\}, \hspace{0.2cm} I_{4}(14)=\{7\}, \hspace{0.2cm}I_{5}(14)=\{5\}, \hspace{0.2cm}I_{14}(14)=\{1\}.
\end{array}
\right.
\end{equation*}
The others are empty. Therefore,
\begin{equation*}
\mathcal{N}(36)=M_{14}\cup 5M_{5}\cup 7M_{4}\cup 11M_{3}\cup 13M_{2}\cup 17M_{2} \underset{i\in\{19,23,25,29,31,35\}}{\cup} iM_{1}.
\end{equation*}
\end{example}

For $1\leq k\leq K$, denote by

\begin{equation}\label{eqn:2.14}
\alpha_{k}(K)=|I_{k}(K)|,
\end{equation}
the number of copies of $M_{k}$ in $\mathcal{N}(q_{K})$.

Formula (\ref{eqn:2.12}) gives the density of copies of $M_{k}$ in $[1,q_{K}]$, which is crucial in computing the entropy. Indeed, following proposition is offered.

\begin{proposition}
\label{proposition:2.5}
On $\mathbb{X}_{2,3}^{0}$, for any $k\geq 1$,

\begin{equation}\label{eqn:2.15}
\underset{K\rightarrow\infty}{\lim}\frac{\alpha_{k}(K)}{q_{K}}=\beta_{2,3}\left(\frac{1}{q_{k}}-\frac{1}{q_{k+1}}\right),
\end{equation}
where
\begin{equation}\label{eqn:2.16}
\beta_{2,3}=\frac{\sharp\left\{\mathcal{I}_{2,3}\cap \left[1,2\cdot 3\right]\right\}}{2\cdot3}=\frac{1}{3}.
\end{equation}

\end{proposition}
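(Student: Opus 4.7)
The plan is to reduce the count $\alpha_k(K)$ to a routine density estimate for integers coprime to $6$ inside an interval, then let $q_K\to\infty$ with $k$ fixed.

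First I would use (\ref{eqn:2.12}) to rewrite
\begin{equation*}
\alpha_k(K) \;=\; \sharp\left(\mathcal{I}_{2,3} \cap \left(\tfrac{q_K}{q_{k+1}},\,\tfrac{q_K}{q_k}\right]\right).
\end{equation*}
Next I would establish a uniform density estimate: for all real $0\le a\le b$,
\begin{equation*}
\left|\,\sharp\!\left(\mathcal{I}_{2,3}\cap(a,b]\right) \;-\; \tfrac{1}{3}(b-a)\,\right| \;\le\; 2.
\end{equation*}
This follows directly from the explicit description in (\ref{eqn:2.4}) that $\mathcal{I}_{2,3}=\{6j+1,\,6j+5\}_{j\ge 0}$: in every block $(6m,\,6(m+1)]$ of six consecutive integers, exactly two belong to $\mathcal{I}_{2,3}$. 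Partitioning $(a,b]$ into a disjoint union of such length-$6$ blocks plus at most two residual pieces of length less than $6$ produces the stated bound.

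Applying the density estimate with $a=q_K/q_{k+1}$ and $b=q_K/q_k$ yields
\begin{equation*}
\alpha_k(K) \;=\; \tfrac{q_K}{3}\left(\tfrac{1}{q_k}-\tfrac{1}{q_{k+1}}\right) + O(1).
\end{equation*}
Dividing by $q_K$ and sending $K\to\infty$ (so that $q_K\to\infty$ while $k$, $q_k$, and $q_{k+1}$ remain fixed) causes the $O(1)/q_K$ term to vanish, producing the asserted limit. Since $\mathcal{I}_{2,3}\cap[1,6]=\{1,5\}$ gives $\beta_{2,3}=1/3$ by (\ref{eqn:2.16}), this matches the target formula exactly.

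No step here presents a substantive obstacle; the only technical care required is the bookkeeping of the boundary pieces when $q_K/q_k$ and $q_K/q_{k+1}$ are non-integers, and that contribution is uniformly $O(1)$ and therefore negligible after dividing by $q_K$. The argument is robust enough that it will transfer essentially verbatim to the general multi-dimensional setting of Theorem \ref{theorem:1.10}, with $\mathcal{I}_{2,3}$ replaced by $\mathcal{I}_{\Gamma}$ and the period $6$ replaced by $\gamma_1\gamma_2\cdots\gamma_d$.
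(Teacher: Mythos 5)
Your argument is correct and is essentially the paper's own proof: the paper simply cites the interval description $I_k(K)=\bigl(\tfrac{q_K}{q_{k+1}},\tfrac{q_K}{q_k}\bigr]\cap\mathcal{I}_{2,3}$ from (\ref{eqn:2.12}) together with (\ref{eqn:2.14}) and declares the limit immediate, leaving implicit exactly the periodic density count you spell out. (Your stated error constant $2$ should really be $4$ --- each of the two boundary pieces can contribute a discrepancy up to $2$ --- but any $O(1)$ bound suffices, so this is immaterial.)
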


\begin{proof}
For any fixed $k\geq 1$, from (\ref{eqn:2.12}) and (\ref{eqn:2.14}), (\ref{eqn:2.15}) follows. Clearly, (\ref{eqn:2.16}) follows from (\ref{eqn:2.4}).
\end{proof}

Furthermore, for $n\geq 1$, denote by
\begin{equation}\label{eqn:2.17}
\left\{
\begin{array}{l}
J_{n}=\left\{i\in \mathcal{I}_{2,3}\hspace{0.2cm}|\hspace{0.2cm} i\leq n\right\} , \\ \\
q(n;i)=\max\left\{q\in\mathbb{M}_{2,3} \hspace{0.2cm}|\hspace{0.2cm} iq\leq n\right\}, \\ \\
I(k;n)=\left\{i\in J_{n}  \hspace{0.2cm}|\hspace{0.2cm} q(n;i)=q_{k}\right\} \text{ and } \\ \\
\alpha(k;n)=|I(k;n)|.
\end{array}
\right.
\end{equation}
Now, the following proposition can be verified.
\begin{proposition}
\label{proposition:2.6}
For $k\geq 1$,

\begin{equation}\label{eqn:2.18}
\underset{n\rightarrow\infty}{\lim}\frac{\alpha(k;n)}{n}=\beta_{2,3}\left(\frac{1}{q_{k}}-\frac{1}{q_{k+1}}\right),
\end{equation}
where $\beta_{2,3}$ is given by (\ref{eqn:2.16}).

\end{proposition}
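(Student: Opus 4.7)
The proof will closely follow that of Proposition \ref{proposition:2.5}, with the essential difference being that $n$ is no longer restricted to the subsequence of elements of $\mathbb{M}_{2,3}$. The plan has two steps.

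First, I would identify $I(k;n)$ as an interval intersected with $\mathcal{I}_{2,3}$, in analogy with (\ref{eqn:2.12}). By definition, $q(n;i)=q_k$ means $i\in\mathcal{I}_{2,3}$ and $iq_k\leq n$ while $iq_{k+1}>n$ (here we use that $q_{k+1}$ is the next element of $\mathbb{M}_{2,3}$ after $q_k$). This is equivalent to
\begin{equation*}
\frac{n}{q_{k+1}}<i\leq \frac{n}{q_k},\qquad i\in\mathcal{I}_{2,3},
\end{equation*}
so that
\begin{equation*}
I(k;n)=\left(\frac{n}{q_{k+1}},\frac{n}{q_k}\right]\cap \mathcal{I}_{2,3}.
\end{equation*}

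Second, I would use the periodicity of $\mathcal{I}_{2,3}$ modulo $6$: from (\ref{eqn:2.4}), $\mathcal{I}_{2,3}$ consists of the two residue classes $1,5\pmod{6}$, so every interval of length $6$ contains exactly two elements of $\mathcal{I}_{2,3}$. Consequently, for every half-open interval $(a,b]\subseteq[0,\infty)$ there is a uniform bound
\begin{equation*}
\bigl|\,|\mathcal{I}_{2,3}\cap(a,b]|-\beta_{2,3}(b-a)\,\bigr|\leq C,
\end{equation*}
where $\beta_{2,3}=1/3$ as in (\ref{eqn:2.16}) and $C$ is an absolute constant (e.g., $C=2$). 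Applying this with $a=n/q_{k+1}$ and $b=n/q_k$ yields
\begin{equation*}
\alpha(k;n)=|I(k;n)|=\beta_{2,3}\,n\left(\frac{1}{q_k}-\frac{1}{q_{k+1}}\right)+O(1),
\end{equation*}
with the $O(1)$ error independent of $n$ (though it may depend on $k$ through the constant $C$, which it does not here). Dividing by $n$ and letting $n\to\infty$ gives (\ref{eqn:2.18}).

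There is no serious obstacle: the only substantive point beyond Proposition \ref{proposition:2.5} is passing from the subsequence $n=q_K$ to arbitrary $n$, and this is absorbed by the uniform periodic-density estimate for $\mathcal{I}_{2,3}$. The argument generalizes verbatim to the multi-dimensional setting of Theorem \ref{theorem:1.10}, with $6$ replaced by $\gamma_1\gamma_2\cdots\gamma_d$ and $\beta_{2,3}$ by $\beta_\Gamma$ defined in (\ref{eqn:1.20-6}).
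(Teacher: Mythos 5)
Your proposal is correct and follows essentially the same route as the paper: the paper's proof likewise establishes $I(k;n)=\left(\frac{n}{q_{k+1}},\frac{n}{q_{k}}\right]\cap\mathcal{I}_{2,3}$ and then asserts that (\ref{eqn:2.18}) ``follows immediately.'' Your second step merely supplies the density estimate for $\mathcal{I}_{2,3}$ that the paper leaves implicit, and it is correct.
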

\begin{proof}
It is easy to see that if $q(n;i)=q_{k}$, then
\begin{equation*}
iq_{k}\leq n < iq_{k+1}.
\end{equation*}
As (\ref{eqn:2.12}), we have
\begin{equation}\label{eqn:2.19-0}
I(k;n)=\left( \frac{n}{q_{k+1}}, \frac{n}{q_{k}}\right]\cap \mathcal{I}_{2,3}.
\end{equation}
Therefore, (\ref{eqn:2.18}) follows immediately.

\end{proof}

After the density of $M_{k}$ (\ref{eqn:2.18}) is obtained, step (II) is completed. Now, the final step (III) is to compute the admissible patterns on $L_{k}$ for all $k\geq 1$.

Previously, two-dimensional pattern generation problems on L-shaped lattices has been studied by Lin and Yang in \cite{37}. The basic lattice $\mathbb{L}_{2,3}$ is defined by

\begin{equation}\label{eqn:2.19-2}
\mathbb{L}_{2,3}=\left\{ (i,j)\in\mathbb{Z}^{2} \hspace{0.1cm} \mid\hspace{0.1cm} 0\leq i+j\leq 1 \text{ for } i,j\geq 0 \right\}=\{(0,0), (1,0),(0,1)\},
\end{equation}
i.e.,
\begin{equation*}
\psfrag{a}{$\mathbb{L}_{2,3}=$}
\psfrag{b}{or}
\psfrag{c}{,}
\includegraphics[scale=0.5]{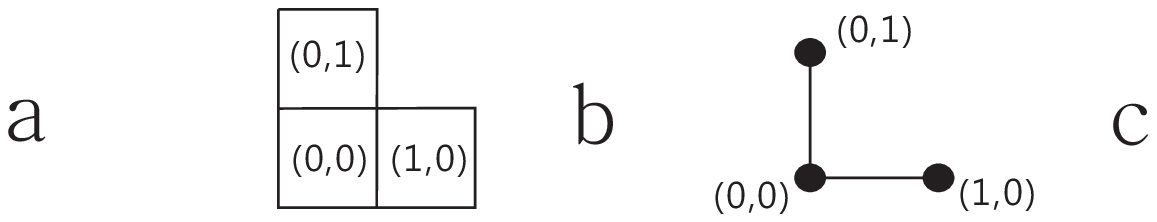}
\end{equation*}
the L-shaped lattice with origin $(0,0)$ as the corner vertex.

The constraint (2.2) implies that the forbidden local pattern on $\mathbb{L}_{2,3}$ is

\begin{equation}\label{eqn:2.19}
\mathcal{F}_{2,3}=\left\{
\begin{array}{c}
\includegraphics[scale=0.7]{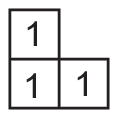}
\end{array}
 \right\}.
\end{equation}
Therefore, the basic set of admissible patterns is

\begin{equation}\label{eqn:2.20}
\mathcal{B}_{2,3}=\left\{
\begin{array}{ccccccc}
\includegraphics[scale=0.7]{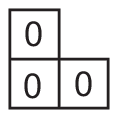}, & \includegraphics[scale=0.7]{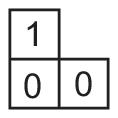}, &\includegraphics[scale=0.7]{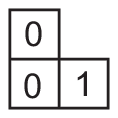}, &\includegraphics[scale=0.7]{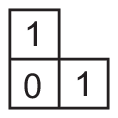}, &\includegraphics[scale=0.7]{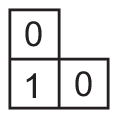}, &\includegraphics[scale=0.7]{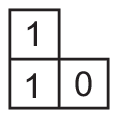}, &\includegraphics[scale=0.7]{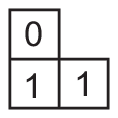}
\end{array}
 \right\};
\end{equation}
see \cite{5,6}.
For $k\geq1$, the set of all admissible patterns on $L_{k}$ that are determined by $\mathcal{B}_{2,3}$ is defined by
\begin{equation}\label{eqn:2.20-0}
\begin{array}{rl}
 & \Sigma_{k}= \Sigma_{k}(\mathcal{B}_{2,3}) \\
 & \\
 = & \left\{U\in \{0,1\}^{L_{k}} \hspace{0.1cm} : \hspace{0.1cm} U\mid_{L}\in\mathcal{B}_{2,3}\text{ for all }L=\mathbb{L}_{2,3}+\mathbf{v}\subseteq L_{k} \text{ with some }\mathbf{v}\in\mathbb{Z}^{2}  \right\}.
\end{array}
\end{equation}
Clearly, $\Sigma_{1}=\left\{
\begin{array}{cc}
\includegraphics[scale=0.7]{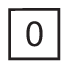}, & \includegraphics[scale=0.7]{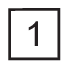}
\end{array}
 \right\}$ and $\Sigma_{2}=\left\{
\begin{array}{cccc}
\includegraphics[scale=0.7]{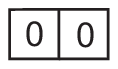}, & \includegraphics[scale=0.7]{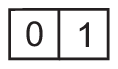},& \includegraphics[scale=0.7]{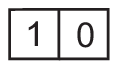},& \includegraphics[scale=0.7]{sigma2_4.eps}
\end{array}
 \right\}$, and no constraint applies on $L_{1}$ and $L_{2}$.

Recall that
\begin{equation*}
X_{2,3;n}=\left\{
(x_{1},x_{2},\cdots, x_{n})\in \{0,1\}^{\mathbb{Z}_{n}} \hspace{0.2cm}| \hspace{0.2cm} x_{k}x_{2k}x_{3k}=0 \text{ for all }k\geq 1 \text{ and } 3k\leq n
\right\}
\end{equation*}
and

\begin{equation*}
h(\mathbb{X}_{2,3}^{0})=\underset{n\rightarrow \infty}{\lim}\hspace{0.1cm}\frac{1}{n}\log|X_{2,3;n}|.
\end{equation*}

After the above procedures have been completed, the following result concerning the entropy $h(\mathbb{X}_{2,3}^{0})$ is obtained.

\begin{theorem}
\label{theorem:2.7}
The entropy of $\mathbb{X}_{2,3}^{0}$ is given by

\begin{equation}\label{eqn:2.23}
h(\mathbb{X}_{2,3}^{0})=\underset{k=1}{\overset{\infty}{\sum}}\beta_{2,3}\left(\frac{1}{q_{k}}-\frac{1}{q_{k+1}}\right)\log|\Sigma_{k}|,
\end{equation}
where $|\Sigma_{k}|$ is the number of all admissible patterns determined by $\mathcal{B}_{2,3}$ on $L_{k}$.
\end{theorem}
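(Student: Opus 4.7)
The plan is to exploit the disjoint decomposition (\ref{eqn:2.5}) of $\mathbb{N}$ into rays $i\mathbb{M}_{2,3}$ so that the global constraint $x_kx_{2k}x_{3k}=0$ decouples into a product of independent two-dimensional pattern problems on the L-shaped lattices $L_k$, and then to invoke Proposition~\ref{proposition:2.6} for the asymptotic density of each piece.

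First I would establish the factorization
\begin{equation*}
|X_{2,3;n}|=\prod_{k=1}^{K(n)}|\Sigma_k|^{\alpha(k;n)},\qquad K(n)=\max\{k:q_k\le n\}.
\end{equation*}
Under the bijection $i\cdot 2^a 3^b\leftrightarrow(a,b)$, the ray-segment $iM_{k(n;i)}$, where $k(n;i)$ is determined by $q(n;i)=q_{k(n;i)}$, is identified with $L_{k(n;i)}$, and the constraint $x_\ell x_{2\ell}x_{3\ell}=0$ with $\ell=i\cdot 2^a 3^b$ becomes precisely a translate of the forbidden pattern $\mathcal{F}_{2,3}$ at vertex $(a,b)$. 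A direct check shows that the condition $3\ell\le n$ (under which the constraint appears in $X_{2,3;n}$) holds if and only if all three vertices $(a,b)$, $(a+1,b)$, $(a,b+1)$ lie in $L_{k(n;i)}$, so the constraints active on each ray are exactly those defining $\Sigma_{k(n;i)}$. Because distinct rays are disjoint by (\ref{eqn:2.6}), the contributions multiply; grouping by $k$ via (\ref{eqn:2.14}) yields the identity.

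Taking logarithms and dividing by $n$ gives
\begin{equation*}
\frac{1}{n}\log|X_{2,3;n}|=\sum_{k=1}^{K(n)}\frac{\alpha(k;n)}{n}\log|\Sigma_k|,
\end{equation*}
and by Proposition~\ref{proposition:2.6} each coefficient converges to $\beta_{2,3}(1/q_k-1/q_{k+1})$ as $n\to\infty$. The substantive obstacle is justifying the interchange of this limit with the (growing) sum. I would do so by a dominated-convergence estimate: from (\ref{eqn:2.19-0}) one has $\alpha(k;n)\le n/q_k$, while $\log|\Sigma_k|\le k\log 2$ because $L_k$ consists of $k$ cells. Thus the $k$th summand is bounded uniformly in $n$ by $(k\log 2)/q_k$. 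Since the $3$-smooth numbers grow super-polynomially (the counting function of $\{2^a3^b\le N\}$ is of order $(\log N)^2$, so $q_k$ grows like $\exp(c\sqrt{k})$), the series $\sum_k k/q_k$ converges absolutely; an Abel summation then shows the target series in (\ref{eqn:2.23}) is also absolutely convergent, and the termwise passage to the limit is legitimate, yielding the stated formula.
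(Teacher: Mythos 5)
Your proposal is correct and follows essentially the same route as the paper's proof: decompose $\mathbb{N}$ into the disjoint rays $i\mathbb{M}_{2,3}$, obtain the factorization $|X_{2,3;n}|=\prod_{k}|\Sigma_{k}|^{\alpha(k;n)}$, and pass to the limit using the density statement of Proposition~\ref{proposition:2.6}. The only difference is that you spell out two points the paper treats as immediate, namely the identification of the active constraints on each ray with the L-shaped pattern problem on $L_{k(n;i)}$ and the justification of the limit--sum interchange via $\alpha(k;n)\leq n/q_{k}$, $\log|\Sigma_{k}|\leq k\log 2$ and the convergence of $\sum_{k}k/q_{k}$.
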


\begin{proof}
For any $n\geq 1$, let $X_{2,3;n}$ be the set of all admissible $n$-sequences in $\mathbb{X}_{2,3}^{0}$. From the condition (\ref{eqn:2.2}), it is easy to see that for any two $i_{1},i_{2}\in \mathcal{I}_{2,3}$, the admissible patterns on $i_{1}\mathbb{M}_{2,3}$ and the admissible patterns on $i_{2}\mathbb{M}_{2,3}$ are mutually independent. Then, we have that for any $n\geq 1$,

\begin{equation*}
|X_{2,3;n}|=\underset{k\in J(n)}{\prod}|\Sigma_{k}|^{\alpha(k;n)}.
\end{equation*}

Therefore, from Propositions 2.3,
\begin{equation*}
\begin{array}{rl}
h(\mathbb{X}_{2,3}^{0})= &  \underset{n\rightarrow \infty}{\lim} \frac{\log |X_{2,3;n}|}{n} \\
& \\
 = & \underset{n\rightarrow \infty}{\lim} \left(\underset{k\in J(n)}{\sum}\alpha(k;n) \log|\Sigma_{k}|\right)/ n \\
& \\
=& \underset{k=1}{\overset{\infty}{\sum}}\beta_{2,3}\left(\frac{1}{q_{k}}-\frac{1}{q_{k+1}}\right)\log|\Sigma_{k}|.
\end{array}
\end{equation*}
The proof is complete.
\end{proof}

\begin{remark}
\label{remark:2.7-1}

\item[(i)]
%
%

Denote by

\begin{equation}\label{eqn:2.22}
|\Sigma_{k}|=b_{k}
\end{equation}
the number of patterns in $\Sigma_{k}$. Since no exact formula relates $2^{m}3^{n}$ to $q_{i}$ for $\mathbb{M}_{2,3}$ in Table 2.1, unlike for a Fibonacci number, no recursive formula exists for $b_{k}$; see \cite{50}. This fact creates the most difficulty in computing entropy for a multi-dimensional system; see\cite{5,6,37}.

However, for relatively small $k$, $b_{k}$ can be computed using the transition matrices developed in \cite{5,6}. Table 2.3 presents few cases for $q_{k}=6^{l}$, $1\leq l\leq 4$.

\begin{equation*}
\begin{tabular}{|l|c|c|c|c|}
\hline
 $k$ & 5 & 14 & 26 & 43
\\\hline
$b_{k}$ & 14& 3722 & 5,434,757 &  172,749,984,030 \\
\hline
\end{tabular}
\end{equation*}

\begin{equation*}
\text{Table 2.3.}
\end{equation*}

\item[(ii)] Define the ratio of $|\Sigma_{k}|$ by

\begin{equation}\label{eqn:2.23-2}
r_{k}=|\Sigma_{k}|/ |\Sigma_{k-1}|= b_{k}/ b_{k-1}
\end{equation}
for $k\geq 2$. From Table 2.1 and Fig. 2.1, it is easy to verify that

\begin{equation}\label{eqn:2.23-3}
\begin{array}{ccc}
r_{k}=2 & \text{if} & q_{k}=2^{n},
\end{array}
\end{equation}
for some $n\geq 1$.

On the other hand, it can be shown that there exists $C\leq \frac{31}{16}$ such that

\begin{equation}\label{eqn:2.23-4}
\begin{array}{ccc}
r_{k}\leq C & \text{for} & q_{k}\neq 2^{n} \text{ for all }n\geq 2.
\end{array}
\end{equation}
Therefore, $\{r_{k}\}$ cannot have a limit as $k$ tends to $\infty$, unlike the Fibonacci sequence which has the limit $\frac{1+\sqrt{5}}{2}$. A further study of $\{r_{k}\}$ and $b_{k}$ is needed.

\end{remark}

In the following, an approximation of (\ref{eqn:2.23}) is given.
For $n \geq 1$, let
\begin{equation}\label{eqn:2.23-1}
h^{(n)}(\mathbb{X}_{2,3}^{0})=\underset{k=1}{\overset{n}{\sum}}\beta_{2,3}\left(\frac{1}{q_{k}}-\frac{1}{q_{k+1}}\right)\log|\Sigma_{k}|.
\end{equation}
Clearly, from Theorem 2.4, $h^{(n)}(\mathbb{X}_{2,3}^{0})$ is a lower bound of $h(\mathbb{X}_{2,3}^{0})$, and $h^{(n)}(\mathbb{X}_{2,3}^{0})$ increasingly approaches $h(\mathbb{X}_{2,3}^{0})$ as $n$ tends to infinity. Furthermore, let

\begin{equation}\label{eqn:2.23-2}
\begin{array}{rl}
E^{(n)}(\mathbb{X}_{2,3}^{0})\equiv &  \underset{k=n+1}{\overset{\infty}{\sum}}\beta_{2,3}\left(\frac{1}{q_{k}}-\frac{1}{q_{k+1}}\right)\log2^{k} \\
& \\
= & \beta_{2,3}  \left( \frac{n+1}{q_{n+1}} +3- \underset{k=1}{\overset{n+1}{\sum}}\frac{1}{q_{k}} \right)\log2,
\end{array}
\end{equation}
where $ \underset{k=1}{\overset{\infty}{\sum}}\frac{1}{q_{k}}=3$. Hence,

\begin{equation}\label{eqn:2.23-3}
h(\mathbb{X}_{2,3}^{0})-h^{(n)}(\mathbb{X}_{2,3}^{0}) \leq E^{(n)}(\mathbb{X}_{2,3}^{0}).
\end{equation}

Table 2.4 presents cases for $n$ with $q_{n}=6^{l}$ and $1\leq l\leq 4$.

\begin{equation*}
\begin{tabular}{|c|c|c|c|c|}
\hline
 $n$ &5 &14  &26 &43 \\
\hline
$h^{(n)}(\mathbb{X}_{2,3}^{0})$ & 0.319901 & 0.537229 & 0.620707 & 0.645733  \\
\hline
\end{tabular}
\end{equation*}

\begin{equation*}
\text{Table 2.4.}
\end{equation*}
Moreover, $h^{(153)}(\mathbb{X}_{2,3}^{0})\approx 0.654303$ and $E^{(153)}(\mathbb{X}_{2,3}^{0}) \approx 0.0000238741$.

Now, the theorem just established can be easily extended to general multiplicative systems. For simplicity, the result is stated and necessary modifications to its proof are only sketched.

First, consider $\mathbb{X}_{\Gamma}^{0}$ that satisfies (\ref{eqn:1.20-1}) and (\ref{eqn:1.20-2}).
$\mathbb{M}_{\Gamma}$ is defined in (\ref{eqn:2.37}). For $k\geq 1$, the blank $k$-cell lattice $L_{k}$ is defined by

\begin{equation}\label{eqn:2.39-10}
L_{k}=\left\{(i_{1},i_{2},\cdots, i_{d})\in \mathbb{Z}^{d} \hspace{0.1cm} \mid \hspace{0.1cm} \gamma_{1}^{i_{1}}\gamma_{2}^{i_{2}}\cdots \gamma_{d}^{i_{d}}\leq q_{k} \text{ for } i_{q}\geq 0, 1\leq q\leq d\right\}.
\end{equation}

After determining the $k$-cell lattice $L_{k}$ of $\mathbb{X}_{\Gamma}^{0}$, the basic lattice $\mathbb{L}_{\Gamma}$ of $\mathbb{X}_{\Gamma}^{0}$ is defined by
\begin{equation}\label{eqn:2.39-11}
\mathbb{L}_{\Gamma}=\left\{(i_{1},i_{2},\cdots, i_{d})\in \mathbb{Z}^{d} \hspace{0.1cm} \mid \hspace{0.1cm} 0\leq \underset{k=1}{\overset{d}{\sum}}\hspace{0.2cm}i_{k}\leq 1 \text{ for }i_{k}\geq 0, 1\leq k\leq d\right\},
\end{equation}
the $d$-dimensional L-shaped lattice with the origin $(0,0,\cdots,0)$ as the corner vertex; see (\ref{eqn:2.19-2}) for $\mathbb{X}_{2,3}^{0}$. From the constraint

\begin{equation}\label{eqn:2.40-99}
x_{k}x_{\gamma_{1}k}x_{\gamma_{2}k}\cdots x_{\gamma_{d}k}=0,
\end{equation}
we have the forbidden set
\begin{equation*}
\mathcal{F}_{\Gamma}=\left\{U=(u_{i_{1},i_{2},\cdots, i_{d}})\in \{0,1\}^{\mathbb{L}_{\Gamma}} \hspace{0.1cm} \mid \hspace{0.1cm} u_{i_{1},i_{2},\cdots, i_{d}}=1 \text{ for all }(i_{1},i_{2},\cdots, i_{d})\in\mathbb{L}_{\Gamma}   \right\}.
\end{equation*}
Then, the basic set of admissible patterns is defined by

\begin{equation}\label{eqn:2.40-10}
\mathcal{B}_{\Gamma}=\{0,1\}^{\mathbb{L}_{\Gamma}} \setminus \mathcal{F}_{\Gamma},
\end{equation}
which induces a $d$-dimensional shift of finite type $\Sigma(\mathcal{B}_{\Gamma})$ on the $d$-dimensional lattice space $\mathbb{Z}^{d}$.

Denote by $\Sigma_{k}(\mathcal{B}_{\Gamma})$ the set of all admissible patterns determined by $\mathcal{B}_{\Gamma}$ on $L_{k}$ and

\begin{equation*}
X_{\Gamma;n}=\left\{
(x_{1},x_{2},\cdots, x_{n})\in \{0,1\}^{\mathbb{Z}_{n}} \hspace{0.2cm}| \hspace{0.2cm} x_{k}x_{\gamma_{1}k}\cdots x_{\gamma_{d}k}=0 \text{ for all }k\geq 1 \text{ and } \gamma_{d}k\leq n
\right\}.
\end{equation*}

By a similar argument, the entropy of $\mathbb{X}_{\Gamma}^{0}$ can be obtained as follows.

\begin{theorem}
\label{theorem:2.10}
Let $\Gamma=\left\{ \gamma_{1},\gamma_{2},\cdots, \gamma_{d} \right\}$ satisfy (\ref{eqn:1.20-1}) and (\ref{eqn:1.20-2}). Then the entropy of $\mathbb{X}_{\Gamma}^{0}$ is given by

\begin{equation}\label{eqn:2.41}
h(\mathbb{X}_{\Gamma}^{0})=\underset{k=1}{\overset{\infty}{\sum}}\beta_{\Gamma} \left( \frac{1}{q_{k}}-\frac{1}{q_{k+1}}\right)\log|\Sigma_{k}|,
\end{equation}
where

\begin{equation}\label{eqn:2.41-0}
\beta_{\Gamma}=\frac{\sharp\left(\mathcal{I}_{\Gamma}\cap \left[1,\gamma_{1}\gamma_{2}\cdots \gamma_{d} \right]\right)}{\gamma_{1}\gamma_{2}\cdots \gamma_{d}}.
\end{equation}

\end{theorem}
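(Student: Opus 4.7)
The plan is to imitate the proof of Theorem \ref{theorem:2.7} verbatim, organized around the three steps (I), (II), (III). Step (I) is already accomplished by the definitions (\ref{eqn:2.37}), (\ref{eqn:2.39-10}), and (\ref{eqn:2.39-11}): the sequence $\{q_k\}$ enumerates $\mathbb{M}_{\Gamma}$ in increasing order, $L_k$ is the $d$-dimensional staircase of $k$ cells, and $\mathbb{L}_{\Gamma}$ is the L-shaped basic lattice on which $\mathcal{B}_{\Gamma}$ is defined through (\ref{eqn:2.40-10}). Since $\gamma_i,\gamma_j$ are pairwise coprime, the decomposition $\mathbb{N}=\bigcup_{i\in \mathcal{I}_{\Gamma}} i\mathbb{M}_{\Gamma}$ in (\ref{eqn:2.39}) is disjoint, which is the structural fact that makes the whole argument decouple into independent orbits.

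For Step (II), I would introduce the analogues of (\ref{eqn:2.17}):
\begin{equation*}
J_n=\{i\in\mathcal{I}_{\Gamma}:i\leq n\},\quad q(n;i)=\max\{q\in\mathbb{M}_{\Gamma}:iq\leq n\},
\end{equation*}
\begin{equation*}
I(k;n)=\{i\in J_n:q(n;i)=q_k\},\quad \alpha(k;n)=|I(k;n)|,
\end{equation*}
and then establish the density lemma
\begin{equation*}
\lim_{n\to\infty}\frac{\alpha(k;n)}{n}=\beta_{\Gamma}\left(\frac{1}{q_k}-\frac{1}{q_{k+1}}\right).
\end{equation*}
Exactly as in (\ref{eqn:2.19-0}), the condition $q(n;i)=q_k$ is equivalent to $i\in (n/q_{k+1},n/q_k]\cap\mathcal{I}_{\Gamma}$. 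The pairwise coprimality assumption (\ref{eqn:1.20-2}) lets me invoke the Chinese remainder theorem to conclude that $\mathcal{I}_{\Gamma}$ is a union of $\prod_{j=1}^{d}(\gamma_j-1)$ residue classes modulo $\gamma_1\gamma_2\cdots\gamma_d$, so it is periodic with natural density exactly $\beta_{\Gamma}$ as defined in (\ref{eqn:2.41-0}). Combining this periodicity with the interval length $n(1/q_k-1/q_{k+1})$ yields the density claim up to a bounded error.

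For Step (III), the constraint (\ref{eqn:2.40-99}) only couples entries inside a single orbit $i\mathbb{M}_{\Gamma}$, so the admissible configurations on distinct orbits are independent; hence
\begin{equation*}
|X_{\Gamma;n}|=\prod_{k\geq 1}|\Sigma_k(\mathcal{B}_{\Gamma})|^{\alpha(k;n)}.
\end{equation*}
Taking $\log$, dividing by $n$, and sending $n\to\infty$ formally gives (\ref{eqn:2.41}). The main obstacle is that this is an exchange of a limit and an infinite sum, so one has to control the tail uniformly in $n$. The trivial bound $|\Sigma_k|\leq 2^{|L_k|}=2^k$ together with the bookkeeping identity
\begin{equation*}
\sum_{k=1}^{N}k\left(\frac{1}{q_k}-\frac{1}{q_{k+1}}\right)=\sum_{k=1}^{N}\frac{1}{q_k}-\frac{N}{q_{N+1}},
\end{equation*}
and the convergent series $\sum_{k=1}^{\infty}\frac{1}{q_k}=\prod_{j=1}^{d}\frac{\gamma_j}{\gamma_j-1}$, show that the tail contribution $\sum_{k>N}\beta_{\Gamma}(1/q_k-1/q_{k+1})\log|\Sigma_k|$ is uniformly small. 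This reduces the computation to finite partial sums, where the density lemma of Step (II) applies term by term, and (\ref{eqn:2.41}) follows. The delicate point throughout is that no explicit formula relates $\gamma_1^{m_1}\cdots\gamma_d^{m_d}$ to $q_k$, so all estimates must be carried out intrinsically through the density $\beta_{\Gamma}$ and the crude bound $|\Sigma_k|\leq 2^k$.
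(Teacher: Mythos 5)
Your proposal follows the paper's own proof essentially verbatim: the same decomposition $\mathbb{N}=\bigcup_{i\in\mathcal{I}_{\Gamma}}i\mathbb{M}_{\Gamma}$, the same density lemma via $I(k;n)=\left(n/q_{k+1},\,n/q_{k}\right]\cap\mathcal{I}_{\Gamma}$, and the same independence/product formula for $|X_{\Gamma;n}|$. The only difference is that you explicitly justify the limit--sum interchange with the bound $|\Sigma_{k}|\leq 2^{k}$ and the convergence of $\sum 1/q_{k}$, a detail the paper dismisses with ``the result follows immediately'' (though its error estimate $E^{(n)}$ for $\mathbb{X}_{2,3}^{0}$ is exactly this computation), so your write-up is correct and, if anything, slightly more complete.
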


\begin{proof}

By (\ref{eqn:2.37}) and (\ref{eqn:2.38}), let
\begin{equation}\label{eqn:2.41-1}
\left\{
\begin{array}{l}
J_{\Gamma;n}=\left\{i\in \mathcal{I}_{\Gamma}\hspace{0.2cm}|\hspace{0.2cm} i\leq n\right\} , \\ \\
q_{\Gamma}(n;i)=\max\left\{q\in\mathbb{M}_{\Gamma} \hspace{0.2cm}|\hspace{0.2cm} iq\leq n\right\}, \\ \\
I_{\Gamma}(k;n)=\left\{i\in J_{\Gamma;n}  \hspace{0.2cm}|\hspace{0.2cm} q_{\Gamma}(n;i)=q_{k}\right\} \text{ and } \\ \\
\alpha_{\Gamma}(k;n)=|I_{\Gamma}(k;n)|.
\end{array}
\right.
\end{equation}
As the proof of Proposition 2.3, it can be verified that
\begin{equation*}
I_{\Gamma}(k;n)=\left(\frac{n}{q_{k+1}},\frac{n}{q_{k}}\right]\bigcap \mathcal{I}_{\Gamma}
\end{equation*}
and

\begin{equation}\label{eqn:2.41-10}
\underset{n\rightarrow \infty}{\lim} \frac{\alpha_{\Gamma}(k;n)}{n}=\beta_{\Gamma} \left( \frac{1}{q_{k}}-\frac{1}{q_{k+1}}\right).
\end{equation}

From the constraint (\ref{eqn:2.40-99}), for any $i_{1}\neq i_{2}\in \mathcal{I}_{\Gamma}$, the admissible patterns on $i_{1}\mathbb{M}_{\Gamma}$ and the admissible patterns on $i_{2}\mathbb{M}_{\Gamma}$ are mutually independent. And, the admissible patterns on $i\mathbb{M}_{\Gamma}$ are completely determined by $\mathcal{B}_{\Gamma}$.

Then, it can be verified that
\begin{equation*}
|X_{\Gamma;n}|=\underset{k\in J_{\Gamma;n}}{\prod}|\Sigma_{k}(\Gamma)|^{\alpha_{\Gamma}(k;n)}.
\end{equation*}
Therefore, the result follows immediately.
%
%
%
%

\end{proof}

The following three-dimensional system illustrates the methods and results.

\begin{example}
\label{example:2.11}
For $d=3$, consider
\begin{equation*}
\mathbb{X}_{2,3,5}^{0}=\left\{
(x_{1},x_{2},x_{3},\cdots )\in\{0,1\}^{\mathbb{N}} \mid x_{k}x_{2k}x_{3k}x_{5k}=0 \text{ for all } k\geq 1
 \right\}.
\end{equation*}

Then,

\begin{equation*}
\begin{array}{c}
\psfrag{a}{$2^{m}$}
\psfrag{b}{$3^{n}$}
\psfrag{c}{$5^{r}$}
\includegraphics[scale=0.8]{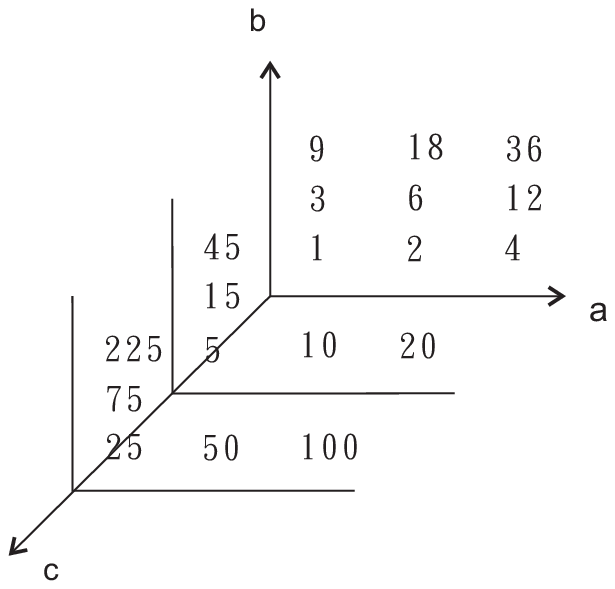}  \\
\mathbb{M}_{2,3,5}
\end{array}
\end{equation*}
\begin{equation*}
\text{Table 2.5.}
\end{equation*}
The first five numbered lattices are listed as follows.

\begin{equation*}
\begin{array}{lllll}
\psfrag{a}{$M_{1}$ }
\includegraphics[scale=0.4]{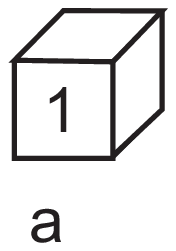} & \hspace{0.3cm}\psfrag{a}{$M_{2}$ }
\includegraphics[scale=0.4]{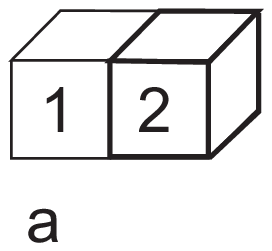}  & \hspace{0.3cm}\psfrag{a}{$M_{3}$ }
\includegraphics[scale=0.4]{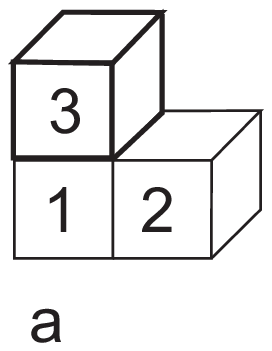}  & \hspace{0.3cm} \psfrag{a}{$M_{4}$ }
\includegraphics[scale=0.4]{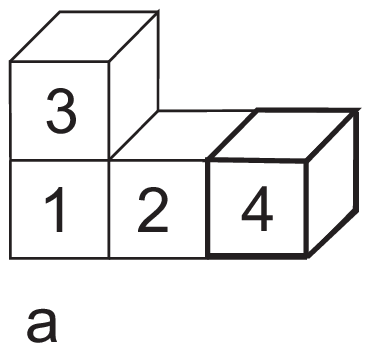} &  \psfrag{a}{$M_{5}$ }
\includegraphics[scale=0.4]{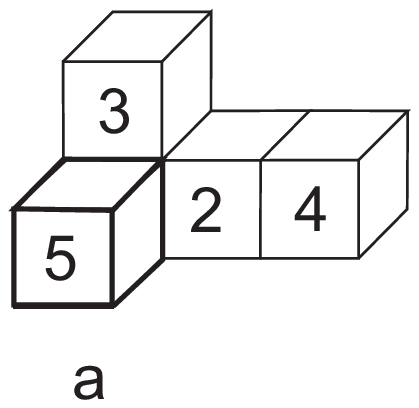}
\end{array}
\end{equation*}
\begin{equation*}
\text{Figure 2.2.}
\end{equation*}
The basic lattice is
\begin{equation*}
\psfrag{a}{$\mathbb{L}_{2,3,5}=$ }
\psfrag{b}{.}
\includegraphics[scale=0.4]{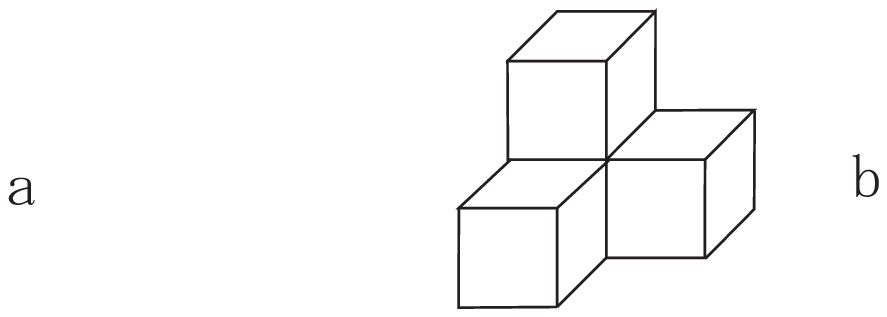}
\end{equation*}

Clearly,
\begin{equation*}
\mathcal{I}_{2,3,5}=\left\{30k+j\hspace{0.2cm}|\hspace{0.2cm} j\in\{1,7,11,13,17,19,23,29\} \text{ and }k\geq 0         \right\}.
\end{equation*}
Therefore, it can be verified that

\begin{equation}\label{eqn:2.41-1}
h(\mathbb{X}_{2,3,5}^{0})=\underset{k=1}{\overset{\infty}{\sum}}\beta_{2,3,5} \left( \frac{1}{q_{k}}-\frac{1}{q_{k+1}}\right)\log|\Sigma_{k}(2,3,5)|,
\end{equation}
where $\beta_{2,3,5}=\frac{4}{15}$ and the forbidden set of $\Sigma_{k}(2,3,5)$ is $\left\{\begin{array}{c}\includegraphics[scale=0.2]{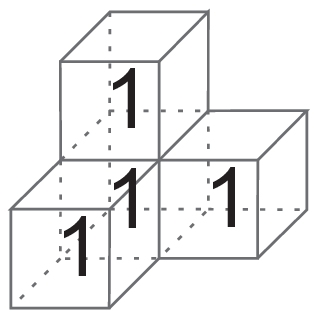}\end{array}\right\}$.

The $n$-th order approximation of (\ref{eqn:2.41-1}) is as follows. For $n\geq1$, let
\begin{equation*}
h^{(n)}(\mathbb{X}_{2,3,5}^{0})=\underset{k=1}{\overset{n}{\sum}}\beta_{2,3,5} \left( \frac{1}{q_{k}}-\frac{1}{q_{k+1}}\right)\log|\Sigma_{k}(2,3,5)|.
\end{equation*}
In Table 2.6, some cases for $|\Sigma_{n}(2,3,5)|$ in $1\leq n\leq 25$ are listed.

\begin{equation*}
\begin{array}{l}
\begin{tabular}{|c|c|c|c|c|c|}
\hline
 $n$ & 5 &10 & 15& 20& 25
\\\hline
$|\Sigma_{n}(2,3,5)|$ & 30& 904 & 25,720 & 738,816  & 19,959,552   \\
\hline
\end{tabular}
\end{array}
\end{equation*}

\begin{equation*}
\text{Table 2.6.}
\end{equation*}
Moreover, $h^{(25)}(\mathbb{X}_{2,3,5}^{0})\approx 0.548837$.
\end{example}

The previous idea also applies to system $\mathbb{X}_{\Gamma}^{0}$ that does not satisfy the conditions (\ref{eqn:1.20-1}) and (\ref{eqn:1.20-2}), where $\Gamma=\left\{\gamma_{1},\gamma_{2},\cdots, \gamma_{d}\right\}$.
Let
$C^{*}$ be the least common multiple of $\gamma_{1},\gamma_{2},\cdots, \gamma_{d}$.
Denote by

\begin{equation}\label{eqn:2.35-10}
\Gamma^{*}=\{p_{1},p_{2},\cdots, p_{Q}\}
\end{equation}
the set of prime factors of $C^{*}$ with $p_{1}<p_{2}<\cdots < p_{Q}$, $Q\geq 1$.
Clearly, $\Gamma^{*}$ satisfies both (\ref{eqn:1.20-1}) and (\ref{eqn:1.20-2}).

Then, $\mathbb{X}_{\Gamma}^{0}$ can be studied by using $\mathbb{M}_{\Gamma}^{*}$ and $\mathcal{I}_{\Gamma^{*}}$.
Denote by

\begin{equation}\label{eqn:2.35-11}
\begin{array}{rl}
\mathbb{M}_{\Gamma^{*}}= & \left\{ p_{1}^{m_{1}}p_{2}^{m_{2}}\cdots p_{Q}^{m_{Q}} \hspace{0.2cm}|\hspace{0.2cm} m_{j}\geq 0    \right\} \\
& \\
\equiv & \left\{ q^{*}_{k}\right\}_{k=1}^{\infty}
\end{array}
\end{equation}
with $q^{*}_{k}<q^{*}_{j}$ if $k< j$. The complementary index set $\mathcal{I}_{\Gamma^{*}}$ of $\mathbb{M}_{\Gamma^{*}}$ is defined by

\begin{equation}\label{eqn:2.35-12}
\mathcal{I}_{\Gamma^{*}}=\left\{ n\in \mathbb{N}\hspace{0.2cm}|\hspace{0.2cm} p_{j}\nmid n, 1\leq j\leq Q    \right\}.
\end{equation}
For $k\geq 1$, let the $k$-cell lattice $L_{k}^{*}$ of $\mathbb{X}_{\Gamma}^{0}$ be
\begin{equation}\label{eqn:2.39-20}
L_{k}^{*}=\left\{(i_{1},i_{2},\cdots, i_{Q})\in \mathbb{Z}^{Q} \hspace{0.1cm} \mid \hspace{0.1cm} p_{1}^{i_{1}}p_{2}^{i_{2}}\cdots p_{Q}^{i_{Q}}\leq q^{*}_{k} \text{ for } i_{q}\geq 0, 1\leq q\leq Q\right\}.
\end{equation}

Now, the constraint

\begin{equation}\label{eqn:2.40}
x_{k}x_{\gamma_{1}k}x_{\gamma_{2}k}\cdots x_{\gamma_{d}k}=0
\end{equation}
can be expressed in terms of $\Gamma^{*}$. Indeed,
define the basic lattice $\mathbb{L}_{\Gamma}$ of $\mathbb{X}_{\Gamma}^{0}$ by
\begin{equation}\label{eqn:2.39-11}
\mathbb{L}_{\Gamma}=\left\{(i_{1},i_{2},\cdots, i_{Q})\in \mathbb{Z}^{Q} \hspace{0.1cm} \mid \hspace{0.1cm}  p_{1}^{i_{1}}p_{2}^{i_{2}}\cdots p_{Q}^{i_{Q}}\in\{1,\gamma_{1},\gamma_{2},\cdots,\gamma_{d}\} \right\}.
\end{equation}
Then, the forbidden set $\mathcal{F}_{\Gamma}$ is given by
\begin{equation*}
\mathcal{F}_{\Gamma}=\left\{U=(u_{i_{1},i_{2},\cdots, i_{Q}})\in \{0,1\}^{\mathbb{L}_{\Gamma}} \hspace{0.1cm} \mid \hspace{0.1cm} u_{i_{1},i_{2},\cdots, i_{Q}}=1 \text{ for all }(i_{1},i_{2},\cdots, i_{Q})\in\mathbb{L}_{\Gamma}   \right\}.
\end{equation*}
Therefore, the basic set of admissible patterns can be written

\begin{equation}\label{eqn:2.40-10}
\mathcal{B}_{\Gamma}=\{0,1\}^{\mathbb{L}_{\Gamma}} \setminus \mathcal{F}_{\Gamma}.
\end{equation}
Notably, $\mathcal{B}_{\Gamma}$ induces a $Q$-dimensional shift of finite type $\Sigma(\mathcal{B}_{\Gamma})$.

Let $\Sigma_{k}(\mathcal{B}_{\Gamma})$ be the set of all admissible patterns that can be determined by  $\mathcal{B}_{\Gamma}$ on $L_{k}^{*}$, $k\geq 1$. In the following, Theorem 2.6 is generalized for $\mathbb{X}_{\Gamma}^{0}$ without the conditions (\ref{eqn:1.20-1}) and (\ref{eqn:1.20-2}).

\begin{theorem}
\label{theorem:2.10-1}
Let $\Gamma=\left\{ \gamma_{1},\gamma_{2},\cdots, \gamma_{d} \right\}$. Then, the entropy of $\mathbb{X}_{\Gamma}^{0}$ is given by

\begin{equation}\label{eqn:2.41}
h(\mathbb{X}_{\Gamma}^{0})=\underset{k=1}{\overset{\infty}{\sum}}\beta_{\Gamma^{*}} \left( \frac{1}{q^{*}_{k}}-\frac{1}{q^{*}_{k+1}}\right)\log|\Sigma_{k}(\mathcal{B}_{\Gamma})|,
\end{equation}
where

\begin{equation}\label{eqn:2.41-0}
\beta_{\Gamma^{*}}=\frac{\sharp\left(\mathcal{I}_{\Gamma^{*}}\cap \left[1,p_{1}p_{2}\cdots p_{Q} \right]\right)}{p_{1}p_{2}\cdots p_{Q}},
\end{equation}
$\Gamma^{*}$, $\mathbb{M}_{\Gamma^{*}}$ and $\mathcal{I}_{\Gamma^{*}}$ are given by (2.41), (2.42) and (2.43), respectively.

\end{theorem}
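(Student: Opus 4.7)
The plan is to reduce the general case to Theorem \ref{theorem:2.10} by working with $\Gamma^{*}$ in place of $\Gamma$, while keeping the constraint (\ref{eqn:2.40}) intact but re-encoded as a local forbidden set on the $Q$-dimensional lattice $\mathbb{Z}^{Q}$. The point is that even though the $\gamma_{j}$'s in $\Gamma$ may fail (\ref{eqn:1.20-1}) and (\ref{eqn:1.20-2}), each $\gamma_{j}$ still lies in $\mathbb{M}_{\Gamma^{*}}$ because every prime factor of $\gamma_{j}$ divides $C^{*}$; so the multiplicative structure induced on $\mathbb{N}$ by the primes $p_{1},\ldots,p_{Q}$ is exactly the one needed.

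First, I would reuse the decomposition $\mathbb{N}=\bigsqcup_{i\in\mathcal{I}_{\Gamma^{*}}}i\mathbb{M}_{\Gamma^{*}}$, which holds because $\Gamma^{*}$ satisfies (\ref{eqn:1.20-1})--(\ref{eqn:1.20-2}). The key observation is that if $k\in i\mathbb{M}_{\Gamma^{*}}$ for some $i\in\mathcal{I}_{\Gamma^{*}}$, then every $\gamma_{j}k$ also lies in $i\mathbb{M}_{\Gamma^{*}}$, since $\gamma_{j}\in\mathbb{M}_{\Gamma^{*}}$ and $\mathbb{M}_{\Gamma^{*}}$ is multiplicatively closed. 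Therefore the constraint $x_{k}x_{\gamma_{1}k}\cdots x_{\gamma_{d}k}=0$ couples only coordinates lying in the \emph{same} fiber $i\mathbb{M}_{\Gamma^{*}}$, and constraints on distinct fibers are mutually independent.

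Second, I would show that, on each fiber $i\mathbb{M}_{\Gamma^{*}}\cong \mathbb{Z}_{\geq 0}^{Q}$ (via the exponent map $p_{1}^{a_{1}}\cdots p_{Q}^{a_{Q}}\mapsto(a_{1},\ldots,a_{Q})$), the constraint (\ref{eqn:2.40}) becomes exactly the local rule of the $Q$-dimensional shift $\Sigma(\mathcal{B}_{\Gamma})$: the positions $\{1,\gamma_{1},\ldots,\gamma_{d}\}$ correspond precisely to the shape $\mathbb{L}_{\Gamma}\subseteq\mathbb{Z}^{Q}$ defined in (\ref{eqn:2.39-11}), and forbidding the all-ones pattern on $\mathbb{L}_{\Gamma}+\mathbf{v}$ for every translate $\mathbf{v}$ corresponds to the constraint at every $k\in i\mathbb{M}_{\Gamma^{*}}$. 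Consequently the set of admissible configurations on the numbered lattice $iM_{k}$ (the first $k$ elements of $i\mathbb{M}_{\Gamma^{*}}$) is in bijection with $\Sigma_{k}(\mathcal{B}_{\Gamma})$, the admissible patterns on the blank lattice $L_{k}^{*}\subseteq\mathbb{Z}^{Q}$.

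Third, I would repeat the density analysis of Proposition \ref{proposition:2.6} verbatim with $\mathbb{M}_{\Gamma^{*}}$, $\mathcal{I}_{\Gamma^{*}}$, and $\{q_{k}^{*}\}$ in place of $\mathbb{M}_{2,3}$, $\mathcal{I}_{2,3}$, $\{q_{k}\}$: the index set $I_{\Gamma^{*}}(k;n)$ equals $(n/q_{k+1}^{*},n/q_{k}^{*}]\cap\mathcal{I}_{\Gamma^{*}}$, and since $\mathcal{I}_{\Gamma^{*}}$ has natural density $\beta_{\Gamma^{*}}$ (because it is periodic modulo $p_{1}p_{2}\cdots p_{Q}$), one obtains
\begin{equation*}
\lim_{n\to\infty}\frac{\alpha_{\Gamma^{*}}(k;n)}{n}=\beta_{\Gamma^{*}}\left(\frac{1}{q_{k}^{*}}-\frac{1}{q_{k+1}^{*}}\right).
\end{equation*}
Combining the independence across fibers with this density then yields the product formula $|X_{\Gamma;n}|=\prod_{k\in J_{\Gamma^{*};n}}|\Sigma_{k}(\mathcal{B}_{\Gamma})|^{\alpha_{\Gamma^{*}}(k;n)}$, and taking $\frac{1}{n}\log$ and passing to the limit gives (\ref{eqn:2.41}) exactly as in the proof of Theorem \ref{theorem:2.10}.

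The main (and essentially only) subtlety is the second step: one must carefully check that $\mathbb{L}_{\Gamma}$ as embedded in $\mathbb{Z}^{Q}$ genuinely captures the geometry of $\{1,\gamma_{1},\ldots,\gamma_{d}\}$ and that translates by any $\mathbf{v}$ corresponding to some $k\in i\mathbb{M}_{\Gamma^{*}}$ stay within the fiber. Once this local-to-global translation is established, the counting argument and the asymptotic density are identical to the coprime case, so no new estimates are required and the convergence of the series follows from the same tail bound used before.
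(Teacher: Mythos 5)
Your proposal is correct and follows essentially the same route as the paper's proof: the disjoint decomposition $\mathbb{N}=\bigcup_{i\in\mathcal{I}_{\Gamma^{*}}}i\mathbb{M}_{\Gamma^{*}}$, the observation that each $\gamma_{j}\in\mathbb{M}_{\Gamma^{*}}$ so the constraint stays within a single fiber (giving independence across fibers), the density computation transplanted from Proposition \ref{proposition:2.6}, and the resulting product formula for $|X_{\Gamma;n}|$. The only difference is that you spell out the fiber-to-$\mathbb{Z}^{Q}$ translation of the local rule in more detail than the paper, which simply asserts that the patterns on $i\mathbb{M}_{\Gamma^{*}}$ are completely determined by $\mathcal{B}_{\Gamma}$.
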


\begin{proof}

%

First, from the construction of (\ref{eqn:2.35-10}), (\ref{eqn:2.35-11}) and (\ref{eqn:2.35-12}), it is clear that
\begin{equation*}
\mathbb{N}=\underset{i\in \mathcal{I}_{\Gamma^{*}}}{\bigcup}\hspace{0.2cm}i\mathbb{M}_{\Gamma^{*}}
\end{equation*}
and for $i,j\in \mathcal{I}_{\Gamma^{*}}$ with $i\neq j$,
\begin{equation*}
i\mathbb{M}_{\Gamma^{*}}\cap j\mathbb{M}_{\Gamma^{*}}=\emptyset.
\end{equation*}
It is easy to see that$\gamma_{q}\in \mathbb{M}_{\Gamma^{*}}$ for $1\leq q\leq d$. Moreover, if $n\in i\mathbb{M}_{\Gamma^{*}}$ for some $i\in \mathcal{I}_{\Gamma^{*}}$, then $\gamma_{q}n\in i\mathbb{M}_{\Gamma^{*}}$ for all $1\leq q\leq d$.
Hence, from the constraint (\ref{eqn:2.40}), the admissible patterns on $i_{1}\mathbb{M}_{\Gamma^{*}}$ and the admissible patterns on $i_{2}\mathbb{M}_{\Gamma^{*}}$ are mutually independent for $i_{1}\neq i_{2}\in \mathcal{I}_{\Gamma^{*}}$.

As in the proof of Theorem 2.6, we can define $J_{\Gamma^{*};n}$, $q_{\Gamma^{*}}(n;i)$, $I_{\Gamma^{*}}(k;n)$ and $\alpha_{\Gamma^{*}}(k;n)$. It can be proven that

\begin{equation*}
I_{\Gamma^{*}}(k;n)=\left(\frac{n}{q^{*}_{k+1}},\frac{n}{q^{*}_{k}}\right]\bigcap \mathcal{I}_{\Gamma^{*}}
\end{equation*}
and

\begin{equation*}
\underset{n\rightarrow \infty}{\lim} \frac{\alpha^{*}_{\Gamma}(k;n)}{n}=\beta_{\Gamma^{*}} \left( \frac{1}{q^{*}_{k}}-\frac{1}{q^{*}_{k+1}}\right).
\end{equation*}

Next, the constraint (\ref{eqn:2.40}) and the construction of $\mathcal{B}_{\Gamma}$ imply that the admissible patterns on $i\mathbb{M}_{\Gamma^{*}}$, $i\in\mathcal{I}_{\Gamma^{*}}$, are completely determined by $\mathcal{B}_{\Gamma}$.
Hence,
\begin{equation*}
|X_{\Gamma;n}|=\underset{k\in J_{\Gamma^{*};n}}{\prod}|\Sigma_{k}(\mathcal{B}_{\Gamma})|^{\alpha_{\Gamma^{*}}(k;n)},
\end{equation*}
where $\Sigma_{k}(\mathcal{B}_{\Gamma})$ the set of all admissible patterns determined by $\mathcal{B}_{\Gamma}$ on $L^{*}_{k}$.
Therefore, (\ref{eqn:2.41}) follows. The proof is complete.
\end{proof}

 %

%
%
%
%
%
%
%
%
%
%
%

%
%
%
%
%
%
%
%
%
%
%
%
%
%

The following example illustrates Theorem 2.8.

\begin{example}
\label{example:2.7-1}
%
%
%
%
%

Consider $\mathbb{X}_{2,8}^{0}$. It is easy to see that $\Gamma^{*}=\{2\}$. From (2.46), the basic lattice $\mathbb{L}_{2,8}=\{0,1,3\}=\includegraphics[scale=0.5]{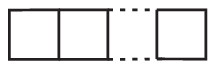}$, here the third cell is deleted. The forbidden set $\mathcal{F}_{2,8}$ is $\left\{\includegraphics[scale=0.5]{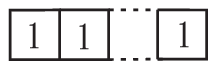}\right\}$ and
$\mathcal{B}_{2,8}=\{0,1\}^{\mathbb{L}_{2,8}}\setminus \mathcal{F}_{2,8}$. Define the associated transition matrix

\begin{equation*}
\psfrag{a}{$A(\mathcal{B}_{2,8})$=}
\psfrag{b}{.}
\includegraphics[scale=0.5]{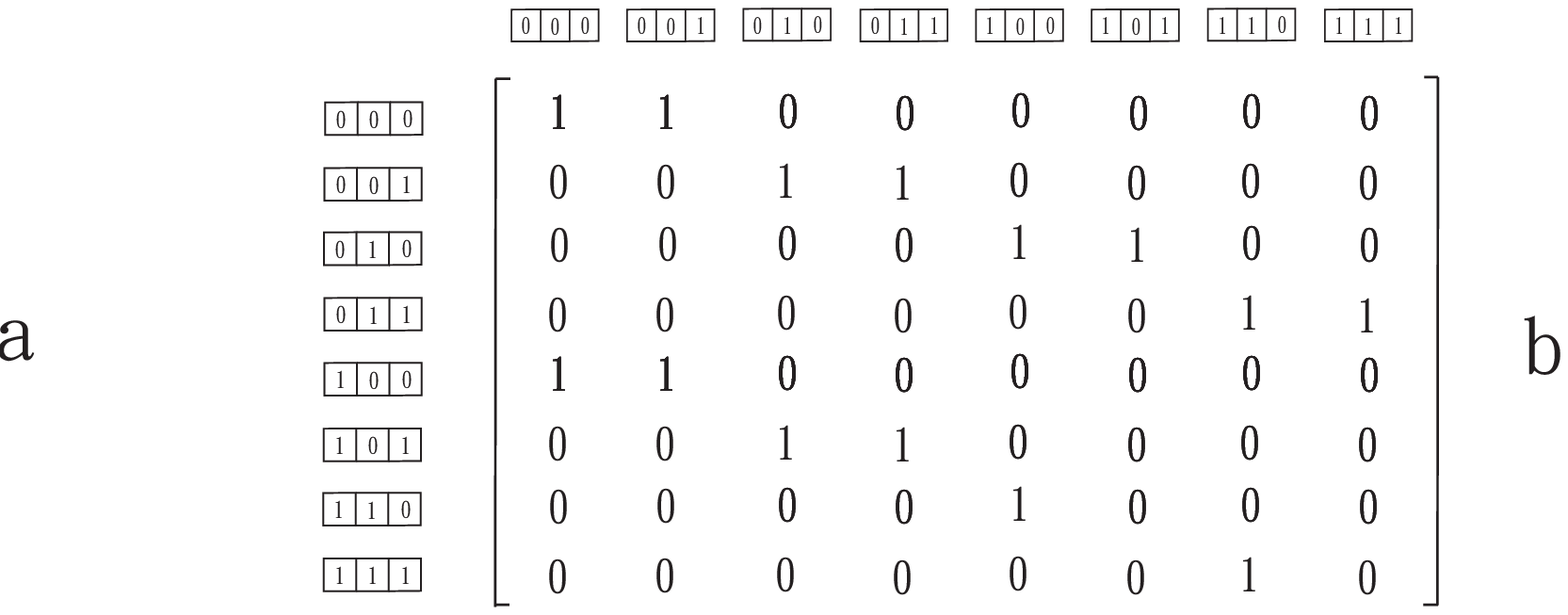}
\end{equation*}
Then,

\begin{equation*}
h(\mathbb{X}_{2,8}^{0})=\underset{k=1}{\overset{\infty}{\sum}}\frac{1}{2^{k+1}}\log |\Sigma_{k}(\mathcal{B}_{2,8})|,
\end{equation*}
where
$|\Sigma_{m}(\mathcal{B}_{2,8})|=2^{m}$ and $|\Sigma_{n}(\mathcal{B}_{2,8})|=\left|A(2,8)^{n-3}\right|$ for $1\leq m \leq 3$ and $n\geq 4$.

\end{example}

In the remaining of this section, the constraint (\ref{eqn:2.40}) is further relaxed. Therefore, we can study more general case than $\mathbb{X}_{\Gamma}^{0}$.
For simplicity, only $\Gamma$ that satisfies conditions (\ref{eqn:1.20-1}) and (\ref{eqn:1.20-2}) is studied.

For any $N\geq 2$, consider a multiplicative system is of $N$-symbols, $\{0,1,2,\cdots,N-1\}$.
For any $d\geq 1$, let the constraint set $\mathcal{C}$ be a subset of $\left\{0,1,\cdots,(N-1)^{d}\right\}$.
Denote by $\mathbb{X}_{\Gamma}(N,\mathcal{C})$ the multiplicative integer system with constraint set $\mathcal{C}$:

\begin{equation}\label{eqn:2.36}
\begin{array}{rl}
 & \mathbb{X}_{\Gamma}(N,\mathcal{C}) \\
 & \\
 = & \left\{
(x_{1},x_{2},\cdots )\in\{0,1,\cdots,N-1\}^{\mathbb{N}} \mid x_{k}x_{\gamma_{1}k}\cdots x_{\gamma_{d}k}\in \mathcal{C} \text{ for } k\geq 1
 \right\}.
 \end{array}
\end{equation}
Then, the basic set $\mathcal{B}_{\Gamma}(N,\mathcal{C})$ of admissible patterns on $\mathbb{L}_{\Gamma}$ is given by

\begin{equation}\label{eqn:2.36-2}
\begin{array}{rl}
& \mathcal{B}_{\Gamma}(N,\mathcal{C}) \\
& \\
= & \left\{U=(u_{i_{1},i_{2},\cdots, i_{d}})\in \{0,1,\cdots,N-1\}^{\mathbb{L}_{\Gamma}} \hspace{0.1cm} \mid \hspace{0.1cm} \underset{(i_{1},i_{2},\cdots, i_{d})\in \mathbb{L}_{\Gamma}}{\prod}u_{i_{1},i_{2},\cdots, i_{d}}\in\mathcal{C}    \right\}.
\end{array}
\end{equation}

The following theorem can be proven as Theorem 2.6.

\begin{theorem}
\label{theorem:2.11}
Let $\Gamma=\left\{ \gamma_{1},\gamma_{2},\cdots, \gamma_{d} \right\}$ satisfy (\ref{eqn:1.20-1}) and (\ref{eqn:1.20-2}) and  $\mathcal{C}\subseteq \left\{0,1,\cdots,(N-1)^{d}\right\}$. The entropy of $\mathbb{X}_{\Gamma}(N,\mathcal{C})$ is given by

\begin{equation}\label{eqn:2.41-20}
h(\mathbb{X}_{\Gamma}(N,\mathcal{C}))=\underset{k=1}{\overset{\infty}{\sum}}\beta_{\Gamma} \left( \frac{1}{q_{k}}-\frac{1}{q_{k+1}}\right)\log|\Sigma_{k}(\mathcal{B}_{\Gamma }(N,\mathcal{C}))|,
\end{equation}
where
$\Sigma_{k}(\mathcal{B}_{\Gamma }(N, \mathcal{C}))$ is the set of $d-$dimensional admissible local patterns that can be generated by $\mathcal{B}_{\Gamma }(N, \mathcal{C})$ on $L_{k}$.
\end{theorem}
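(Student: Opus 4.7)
The plan is to mimic the proof of Theorem \ref{theorem:2.10}, with the binary alphabet replaced by $\{0,1,\ldots,N-1\}$ and with the forbidden set $\mathcal{F}_{\Gamma}$ replaced by the complement of $\mathcal{B}_{\Gamma}(N,\mathcal{C})$ inside $\{0,1,\ldots,N-1\}^{\mathbb{L}_{\Gamma}}$. The three structural ingredients---orbit decomposition of $\mathbb{N}$, independence of constraints across orbits, and density of admissible lattices---are unaffected by enlarging the alphabet or by replacing the single forbidden word by an arbitrary $\mathcal{C}$, because we continue to assume (\ref{eqn:1.20-1}) and (\ref{eqn:1.20-2}).

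First, since $\Gamma$ satisfies pairwise coprimality, the disjoint decomposition $\mathbb{N}=\bigcup_{i\in\mathcal{I}_{\Gamma}} i\mathbb{M}_{\Gamma}$ from (\ref{eqn:2.39}) still holds, and each $\gamma_{j}$ belongs to $\mathbb{M}_{\Gamma}$. Hence if $k\in i\mathbb{M}_{\Gamma}$ then $\gamma_{j}k\in i\mathbb{M}_{\Gamma}$ for every $1\leq j\leq d$, so the constraint $x_{k}x_{\gamma_{1}k}\cdots x_{\gamma_{d}k}\in\mathcal{C}$ couples only indices within a single orbit. Configurations on orbits indexed by distinct $i_{1}\neq i_{2}\in\mathcal{I}_{\Gamma}$ are therefore mutually independent.

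Next, I would pull back each orbit $i\mathbb{M}_{\Gamma}$ to $\mathbb{Z}^{d}$ via the exponent embedding $\gamma_{1}^{m_{1}}\cdots\gamma_{d}^{m_{d}}\mapsto(m_{1},\ldots,m_{d})$. This converts the multiplicative constraint along the orbit into the shift-of-finite-type condition that every translate of $\mathbb{L}_{\Gamma}$ sitting inside the lattice carries a pattern from $\mathcal{B}_{\Gamma}(N,\mathcal{C})$, exactly as in (\ref{eqn:2.36-2}). The admissible restrictions of an orbit to its first $q_{k}$ elements are then precisely the patterns of $\Sigma_{k}(\mathcal{B}_{\Gamma}(N,\mathcal{C}))$. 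Letting $J_{\Gamma;n}$ and $\alpha_{\Gamma}(k;n)$ be as in (\ref{eqn:2.41-1}), independence of orbits yields
\begin{equation*}
|X_{\Gamma;n}(N,\mathcal{C})|=\prod_{k\in J_{\Gamma;n}}|\Sigma_{k}(\mathcal{B}_{\Gamma}(N,\mathcal{C}))|^{\alpha_{\Gamma}(k;n)}.
\end{equation*}

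Finally, I would take $\tfrac{1}{n}\log$ of the above and invoke the density estimate $\alpha_{\Gamma}(k;n)/n\to\beta_{\Gamma}(1/q_{k}-1/q_{k+1})$ already established in (\ref{eqn:2.41-10}) to deduce (\ref{eqn:2.41-20}) formally. The main obstacle---and the only point that is not pure bookkeeping---is justifying the exchange of the limit in $n$ with the infinite sum in $k$. This I would handle by the uniform majorization $|\Sigma_{k}(\mathcal{B}_{\Gamma}(N,\mathcal{C}))|\leq N^{|L_{k}|}\leq N^{k}$, which reduces the problem to showing that the tail $\sum_{k>K}\beta_{\Gamma}(1/q_{k}-1/q_{k+1})\,k\log N$ tends to $0$ as $K\to\infty$. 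This is the direct $N$-symbol analogue of the error bound (\ref{eqn:2.23-2}) used in Remark \ref{remark:2.7-1}, so dominated convergence closes the argument.
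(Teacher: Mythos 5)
Your proposal is correct and follows essentially the same route as the paper, which simply reduces Theorem \ref{theorem:2.11} to the argument of Theorem \ref{theorem:2.10} by observing that $\mathcal{B}_{\Gamma}(N,\mathcal{C})$ completely determines the patterns on each orbit $i\mathbb{M}_{\Gamma}$ and that the orbit decomposition, independence, and density estimates carry over verbatim. Your explicit justification of the limit--sum exchange via the majorization $|\Sigma_{k}|\leq N^{k}$ and the convergence of $\sum_{k}1/q_{k}$ is a detail the paper leaves implicit, and it is handled correctly.
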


\begin{proof}
This proof is similar to the proof of Theorem 2.6. The only difference between $\mathbb{X}_{\Gamma}^{0}$ and $\mathbb{X}_{\Gamma}(N,\mathcal{C})$ is their constraints.
By (\ref{eqn:2.36}), it is easy to see that the basic set $\mathcal{B}_{\Gamma }(N,\mathcal{C})$ can completely determine the patterns on $i\mathbb{M}_{\Gamma}$ for $i\in \mathcal{I}_{\Gamma}$.
Therefore, the result follows.

\end{proof}

The following example illustrates Theorem 2.10.

\begin{example}
\label{example:2.11-1}
Let $N=3$ and $\mathcal{C}=\{0,2\}$. Then

\begin{equation*}
\mathbb{X}_{2}(3,\mathcal{C})=\left\{
(x_{1},x_{2},x_{3},\cdots )\in\{0,1,2\}^{\mathbb{N}} \mid x_{k}x_{2k}\in \{0,2\} \text{ for all } k\geq 1
 \right\}.
\end{equation*}
The basic set of admissible local patterns is now given by
\begin{equation*}
\mathcal{B}_{2}(3, \mathcal{C})=\left\{
\begin{array}{ccccccc}
\includegraphics[scale=0.5]{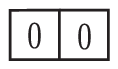}, & \includegraphics[scale=0.5]{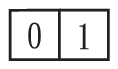}, & \includegraphics[scale=0.5]{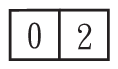}, & \includegraphics[scale=0.5]{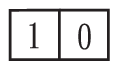}, &
\includegraphics[scale=0.5]{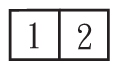}, & \includegraphics[scale=0.5]{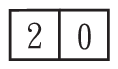}, & \includegraphics[scale=0.5]{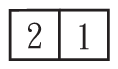}
\end{array}
\right\}.
\end{equation*}
The associated transition matrix is

\begin{equation*}
\psfrag{a}{$A(2;3,\mathcal{C})$=}
\psfrag{b}{.}
\includegraphics[scale=0.5]{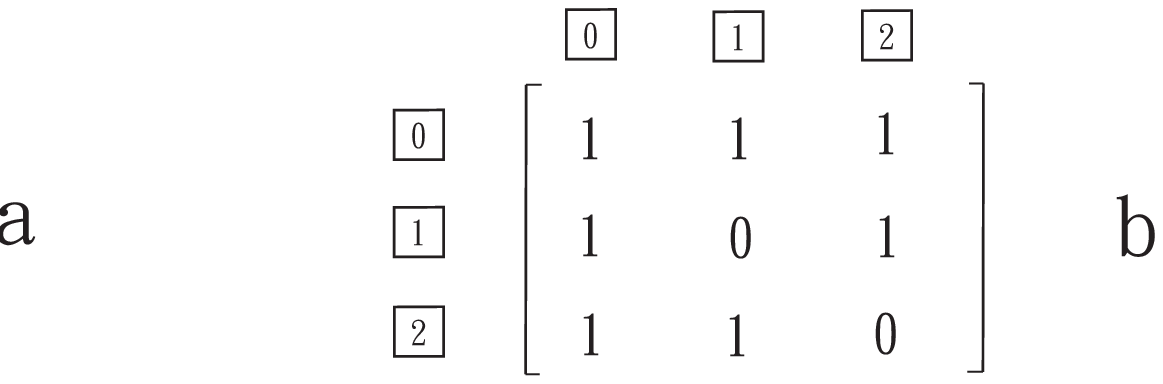}
\end{equation*}
Therefore, as in Theorem 1.2,

\begin{equation*}
h(\mathbb{X}_{2}(3,\mathcal{C}))=4\hspace{0.2cm}\underset{k=1}{\overset{\infty}{\sum}}\frac{1}{3^{k+1}}\log a_{k}(2;3,\mathcal{C}),
\end{equation*}
where $a_{1}(2;3,\mathcal{C})=3$, $a_{k}(2;3,\mathcal{C})=\left|A(2;3,\mathcal{C})^{k-1}\right|$ for all $k\geq 2$.

\end{example}
\section{One-dimensional coupled systems}
\hspace{0.4cm}This section investigates the one-dimensional coupled system which is an intersection of the multiplicative integer system $\mathbb{X}_{Q}^{0}$ with an additive proper shift of finite type $\Sigma_{A}$, i.e.,

\begin{equation}\label{eqn:3.1}
\mathbb{X}_{Q}^{A}\equiv \mathbb{X}_{Q}^{0} \cap \Sigma_{A}.
\end{equation}

A simple system is considered first; the findings are then extended to general systems. Consider

\begin{equation}\label{eqn:3.2}
\begin{array}{rl}
\mathbb{X}^{A}_{2}\equiv & \mathbb{X}_{2}^{0} \cap \Sigma_{A} \\
& \\
= & \left\{
(x_{1},x_{2},x_{3},\cdots )\in\{0,1\}^{\mathbb{N}} \mid x_{k}x_{2k}=0 \text{ for all } k\geq 1 \text{ and } (x_{1},x_{2},x_{3},\cdots )\in\Sigma_{A}
 \right\}.
 \end{array}
\end{equation}

To incorporate the effect of $\Sigma_{A}$, Table 1.1 is replaced by the following figure.

\begin{equation*}
\includegraphics[scale=0.6]{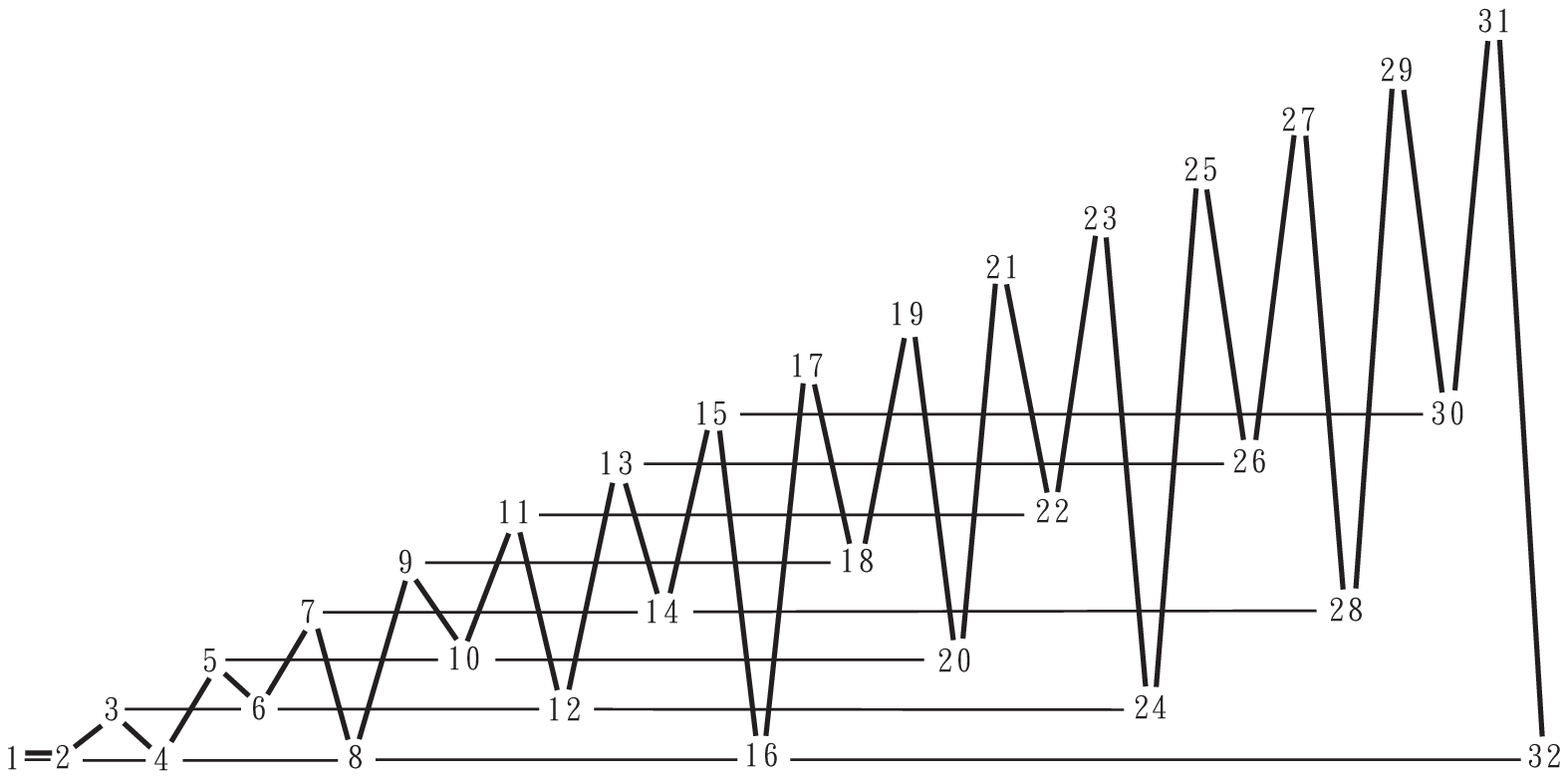}
\end{equation*}
\begin{equation*}
\text{Figure 3.1.}
\end{equation*}

As in Table 1.1, the horizontal lines in Fig. 3.1 connect the integers in $i\mathbb{M}_{2}$ for each $i\in\mathcal{I}_{2}$, the effect comes from $\mathbb{X}_{2}^{0}$. On the other hand, the bold zigzag line in Fig. 3.1 connects all natural integers comes from $\Sigma_{A}$. Therefore, for any $i\neq j$ in $\mathcal{I}_{2}$, $i\mathbb{M}_{2}$ and $j\mathbb{M}_{2}$ are no longer mutually independent. In fact, they are all coupled through the relation set $\mathbb{M}_{2}$. Therefore, (\ref{eqn:3.2}) is regarded as a coupled system.

Before the system $\mathbb{X}_{2}^{A}$ is decoupled, the following definition is needed.

\begin{definition}
\label{definition:3.0}
Two sets of integers of $M$ and $M'$ are mutually independent in $\mathbb{X}_{2}^{A}$ if
\begin{equation}\label{eqn:3.3-0}
M\cap M'=\emptyset
\end{equation}
and any numbers $m$ in $M$ and $m'$ in $M'$ are not consecutive and also not consecutive in $2$-power, i.e., if $m=2^{n}$ for some $n$ then $m'\neq 2^{n+1}$ and $2^{n-1}$.

\end{definition}

Then, the following lemma can be obtained.

\begin{lemma}
\label{lemma:3.0-1}
Suppose $M$ and $M'$ are mutually independent in $\mathbb{X}_{2}^{A}$. Then
\begin{equation}\label{eqn:3.3-1}
|\Sigma(M\cup M')|=|\Sigma(M)||\Sigma(M')|,
\end{equation}
where $\Sigma(M)$ is the set of all admissible patterns on lattice $M$, and $\Sigma(M')$ and $\Sigma(M\cup M')$ are defined analogously.
\end{lemma}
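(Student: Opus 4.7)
The plan is to construct an explicit bijection
$$\Phi\colon \Sigma(M)\times \Sigma(M') \longrightarrow \Sigma(M\cup M')$$
by piecing patterns together coordinatewise. Since $M\cap M'=\emptyset$, for any pair $(U,U')\in \Sigma(M)\times \Sigma(M')$ the concatenation
$$\Phi(U,U')(n) \;=\; \begin{cases} U(n), & n\in M,\\ U'(n), & n\in M',\end{cases}$$
is an unambiguously defined element of $\{0,1\}^{M\cup M'}$, and the inverse map is simply restriction $V\mapsto (V|_M, V|_{M'})$. Once $\Phi$ is shown to be well defined and surjective onto $\Sigma(M\cup M')$, the identity $|\Sigma(M\cup M')|=|\Sigma(M)|\,|\Sigma(M')|$ follows immediately from $M\cap M'=\emptyset$.

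The content of the lemma therefore reduces to verifying two implications about admissibility. First I would note that every constraint defining $\mathbb{X}_2^A$ has one of two shapes: a multiplicative constraint $x_k x_{2k}=0$, whose support is the pair $\{k,2k\}$ (adjacent in the column $i\mathbb{M}_2$ where $k$ lies), or an additive nearest-neighbour constraint from $\Sigma_A$, whose support is a pair $\{n,n+1\}$. Restriction to $M$ (or $M'$) trivially preserves any constraint whose support lies entirely inside $M$ (respectively $M'$), so $V\in \Sigma(M\cup M')$ implies $V|_M\in \Sigma(M)$ and $V|_{M'}\in \Sigma(M')$; this gives the ``easy'' direction.

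For the reverse direction I must show that if $U\in \Sigma(M)$ and $U'\in \Sigma(M')$, then $\Phi(U,U')$ satisfies every constraint whose support meets $M\cup M'$. The only danger is a constraint whose support has one endpoint in $M$ and the other in $M'$; the mutual independence hypothesis is exactly designed to rule this out. The condition that no $m\in M$ and $m'\in M'$ are consecutive integers eliminates all additive $\Sigma_A$ constraints that straddle the two sets, and the condition that they are not ``consecutive in $2$-power''---i.e.\ neither $m'=2m$ nor $m=2m'$ occurs---eliminates all multiplicative constraints $x_k x_{2k}=0$ with $\{k,2k\}$ straddling $M$ and $M'$. Every remaining constraint has support entirely inside $M$ or entirely inside $M'$, and is therefore satisfied by $U$ or by $U'$; thus $\Phi(U,U')\in \Sigma(M\cup M')$.

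The only delicate point---and the one I would be most careful about---is making the case analysis of constraint supports exhaustive, because the definition of ``mutually independent'' is stated somewhat informally (``not consecutive and not consecutive in $2$-power''). I would begin the proof by reformulating independence as the clean statement that no pair $\{n,n+1\}$ and no pair $\{k,2k\}$ has one element in $M$ and the other in $M'$; after that single structural observation, the bijection argument above is completely routine and the equality $|\Sigma(M\cup M')|=|\Sigma(M)|\,|\Sigma(M')|$ drops out.
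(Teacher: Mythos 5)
Your proof is correct and follows essentially the same route as the paper, whose entire argument is the one-line observation that independence of $M$ and $M'$ makes the patterns on the two lattices independent; you simply make the implicit bijection $\Sigma(M)\times\Sigma(M')\to\Sigma(M\cup M')$ and the case analysis of constraint supports explicit. Your reformulation of the (informally stated) independence condition as ``no constraint support $\{n,n+1\}$ or $\{k,2k\}$ straddles $M$ and $M'$'' is exactly the right way to make the paper's terse argument rigorous.
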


\begin{proof}
Since $M$ and $M'$ are decoupled in $\mathbb{X}_{2}^{A}$, the patterns in $\Sigma(M)$ are independent of the patterns in $\Sigma(M')$. Therefore, the result holds.
\end{proof}

The strategy for studying (\ref{eqn:3.2}) is to decouple the whole system into disjoint pieces which are located in some proper subset $\tilde{\mathbb{X}}_{2}^{A}$ of $\mathbb{X}_{2}^{A}$.

%
From the reduced system $\tilde{\mathbb{X}}_{2}^{A}$, a sequence $\left\{\mathbb{X}_{2}^{A}(m)\right\}_{m=1}^{\infty}$ of independent decoupled subsystems are chosen. Then, the entropy of the decoupled independent system $\mathbb{X}_{2}^{A}(m)$ can be computed easily. An appropriate choice of $\mathbb{X}_{2}^{A}(m)$ is demonstrated to enable the recovery of the entropy of $\mathbb{X}_{2}^{A}$, i.e.,

 \begin{equation}\label{eqn:3.5}
\underset{m\rightarrow\infty}{\lim}h(\mathbb{X}_{2}^{A}(m))=h(\mathbb{X}_{2}^{A}).
\end{equation}

As in decoupled system $\mathbb{X}_{2}^{0}$, the admissible numbered lattice $M_{k}$ in $\tilde{\mathbb{X}}_{2}^{A}$ is firstly picked up.
Indeed, in Fig. 3.2, some $M_{k}(l)$ are drawn for $1\leq k\leq 4$.

\begin{equation*}
\begin{array}{cccc}
\psfrag{a}{{\scriptsize(a) $M_{1}(3)$}}
\includegraphics[scale=0.5]{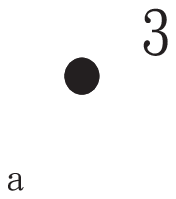} & \hspace{1.0cm}
\psfrag{a}{{\scriptsize(b) $M_{1}(l)$}}
\psfrag{l}{$l$}
\includegraphics[scale=0.5]{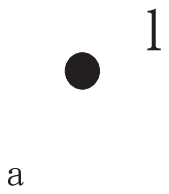} & \hspace{1.0cm}
\psfrag{a}{{\scriptsize (c) $M_{2}(3)$}}
\includegraphics[scale=0.5]{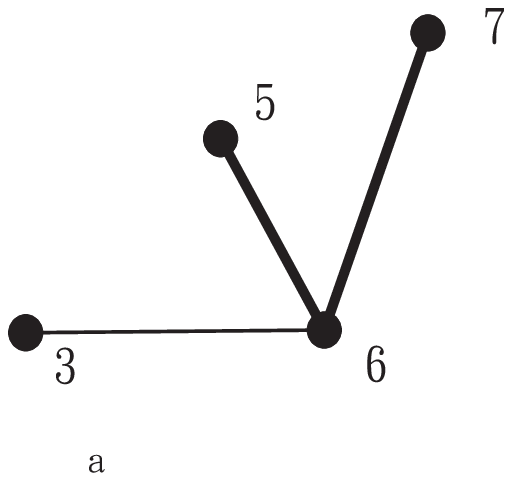}  & \hspace{1.0cm}
\psfrag{a}{{\scriptsize (d) $M_{2}(l)$}}
\psfrag{b}{{\scriptsize$l$}}
\psfrag{c}{{\scriptsize$2l$}}
\psfrag{d}{{\scriptsize$2l-1$}}
\psfrag{e}{\scriptsize{$2l+1$}}
\includegraphics[scale=0.5]{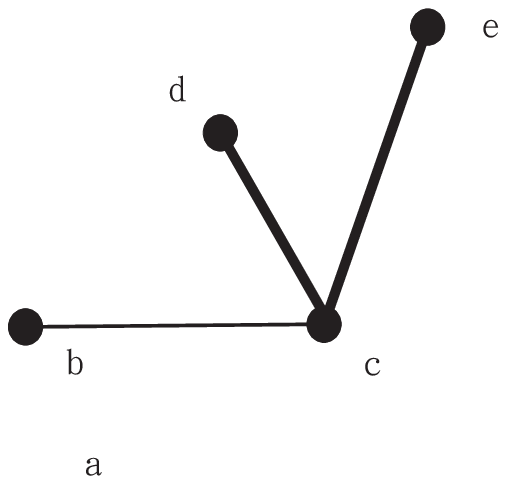}
\end{array}
\end{equation*}

\begin{equation*}
\begin{array}{cc}
\psfrag{a}{{\scriptsize (e) $M_{3}(3)$}}
\includegraphics[scale=0.3]{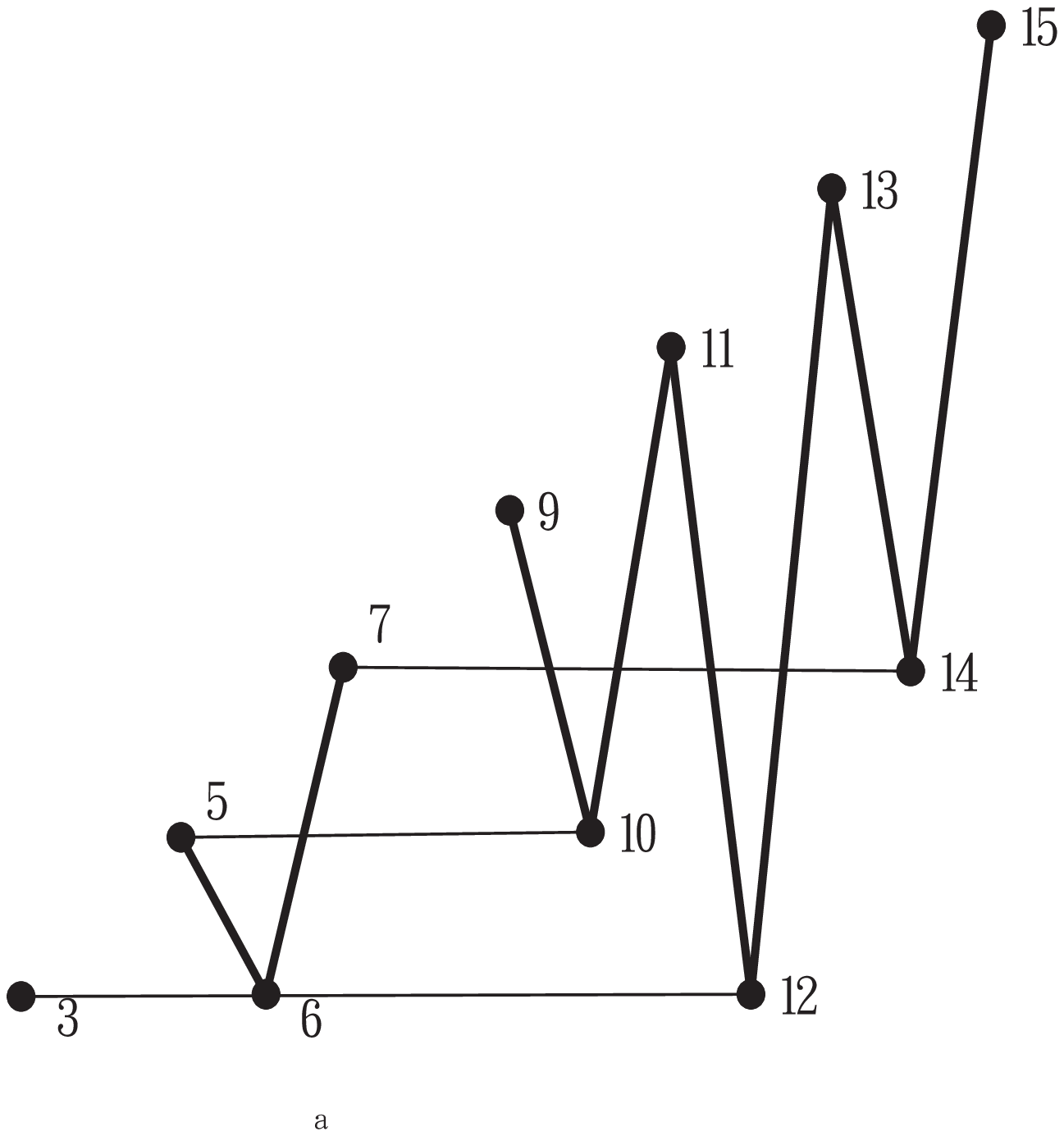} & \hspace{1.0cm}
\psfrag{a}{{\tiny (f) $M_{3}(l)$}}
\psfrag{b}{{\tiny$l$}}
\psfrag{c}{{\tiny$2l$}}
\psfrag{d}{{\tiny$4l$}}
\psfrag{e}{{\tiny$2l-1$}}
\psfrag{f}{{\tiny$4l-2$}}
\psfrag{g}{{\tiny$2l+1$}}
\psfrag{h}{{\tiny$4l+2$}}
\psfrag{j}{{\tiny$4l-3$}}
\psfrag{k}{{\tiny$4l-1$}}
\psfrag{l}{{\tiny$4l+1$}}
\psfrag{m}{{\tiny$4l+3$}}
\includegraphics[scale=0.3]{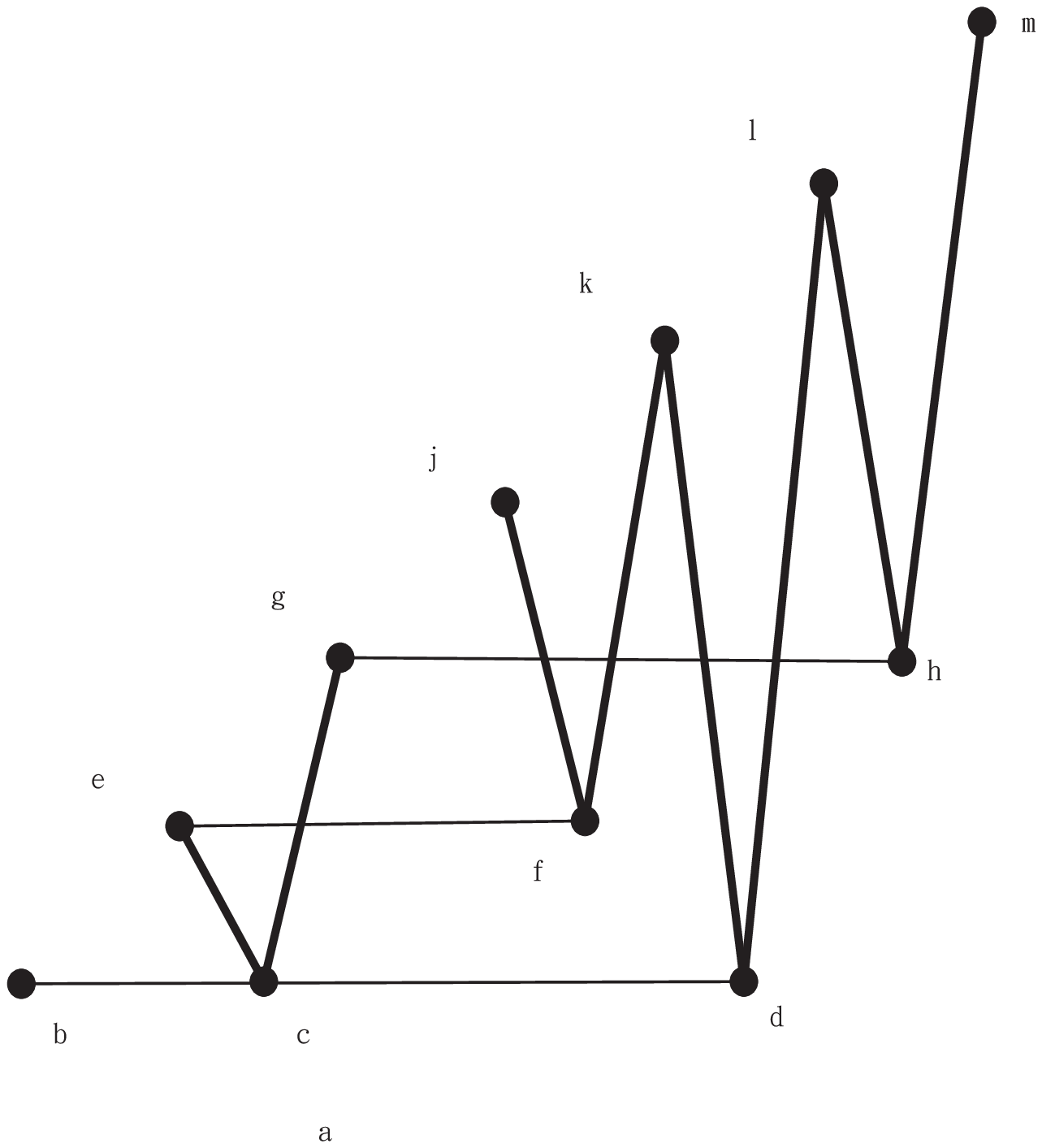}
\end{array}
\end{equation*}

\begin{equation*}
\psfrag{a}{{\scriptsize (g) $M_{4}(3)$}}
\includegraphics[scale=0.3]{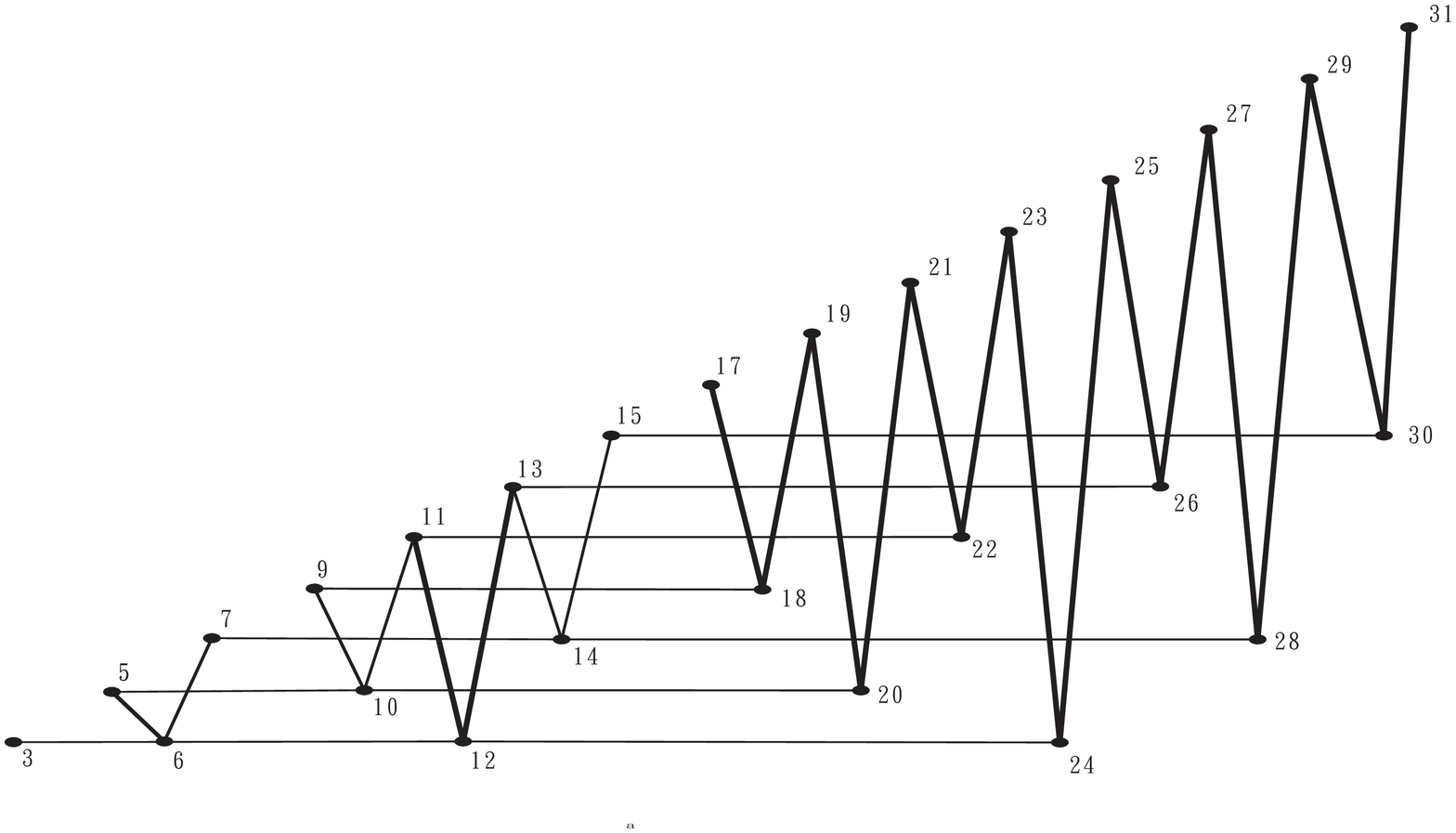}
\end{equation*}

\begin{equation*}
\text{Figure 3.2.}
\end{equation*}

The choice of $M_{k}$ is recursive and robotic. The basic idea is that any number can produce the next generation through $\mathbb{X}_{2}$ or $\Sigma_{A}$. More precisely, for each number $n$, if $n\in\mathcal{I}_{2}$, then $n$ can produce the next generation $2n\in n \mathbb{M}_{2}$. If $n \notin \mathcal{I}_{2}$ with $n=i 2^{m}$, $m\geq 1$, then $n$ can produce $2n=i 2^{m+1}\in i\mathbb{M}_{2}$ through $\mathbb{X}_{2}$ and
$n\pm 1=i 2^{m}\pm 1 \in \mathcal{I}_{2}$ through $\Sigma_{A}$. In summary, a complete production cycle is as follows. If $n\in\mathcal{I}_{2}$, then $n$ produces $2n$ and then $2n\pm 1$. If $n=i 2^{m}$, $m\geq 1$, then $n$ produces $i2^{m+1}$ and then $i2^{m+1}\pm 1$.

For example, $M_{1}(3)$ has one cell, and number $3$ is regarded as the number of first generation. $M_{2}(3)$ is constructed from $M_{1}(3)$ by producing number $6$ from $3$ through $3\mathbb{M}_{2}$. Immediately, $6$ creates numbers $5$ and $7$ as the descendanta through $\Sigma_{A}$. $M_{2}(3)$ is of degree 2 since there are two numbers $\{3,6\}$ on the horizontal line.

The construction of $M_{3}$ from $M_{2}$ is performed similarly: the number $6$ yields number 12 in $3\mathbb{M}_{2}$. On the same time, numbers $5$ and $7$ yield the numbers $10$ and 14 in $5\mathbb{M}_{2}$ and $7\mathbb{M}_{2}$, respectively. Next, numbers 10, 14 and 12 yield their descendants 9, 11, 13, 15 and 11, 13 in $\mathcal{I}_{2}$ through $\Sigma_{A}$, as presented in Fig. 3.2 (e).
Now, the three numbers $3$, $6$ and $12$ are in the lowest horizontal direction, and $M_{3}(3)$ is therefore of degree 3. On $M_{k}(i)$, the maximal number of numbers of cells in the horizontal direction is $k$, and $M_{k}(i)$ is of degree $k$.

Now, $M_{k}(l)$ can be defined formally as follows.
%
%

\begin{definition}
\label{Definition:3.1}

For each $l\in \mathcal{I}_{2}$, define $V_{1}(l)=\{l\}$. For $k\geq 1$, define

\begin{equation*}
V_{k+1}(l)=\left\{ 2i \text{ or }2i\pm 1 \hspace{0.2cm}\mid \hspace{0.2cm} i\in V_{k}(l) \right\}.
\end{equation*}
Then, define $M_{1}(l)=V_{1}(l)$ and for $k\geq 2$,

\begin{equation*}
M_{k+1}(l)\equiv M_{k}(l) \cup V_{k+1}(l).
\end{equation*}

\end{definition}
See Fig. 3.2 for $M_{k}(3)$, $1\leq k\leq 4$.

Notably, $M_{m}(l)$ and $M_{m}(l')$ are mutually independent when $l,l'\in\mathcal{I}_{2}$ and $|l-l'|\geq 2^{m+1}$.

After the lattices $M_{k}$ and $L_{k}$ are identified, in a given range $\mathcal{N}(2^{n})$, (II) is then to be carried out, i.e., the number of disjoint copies of $M_{k}(l)\subset \tilde{\mathbb{X}}_{2}^{A}$ with $l\in\mathcal{I}_{2}$ is computed. For example, in Fig. 3.3, $[1,32]\cap\mathbb{N}$ can be decoupled by $M_{2}(3)$, $M_{2}(9)$, $M_{2}(11)$, $M_{2}(13)$, $M_{2}(15)$ and the numbers in $\{1,2,4,8,10,12,14,16,20,24,28,32\}$ are not used. There are one copy $M_{2}(3)$ in $(2,2^{3})$ and four copies $M_{2}(9)$, $M_{2}(11)$, $M_{2}(13)$, $M_{2}(15)$ in $(2^{3},2^{5})$; see Fig. 3.3.

\begin{equation*}
\includegraphics[scale=0.6]{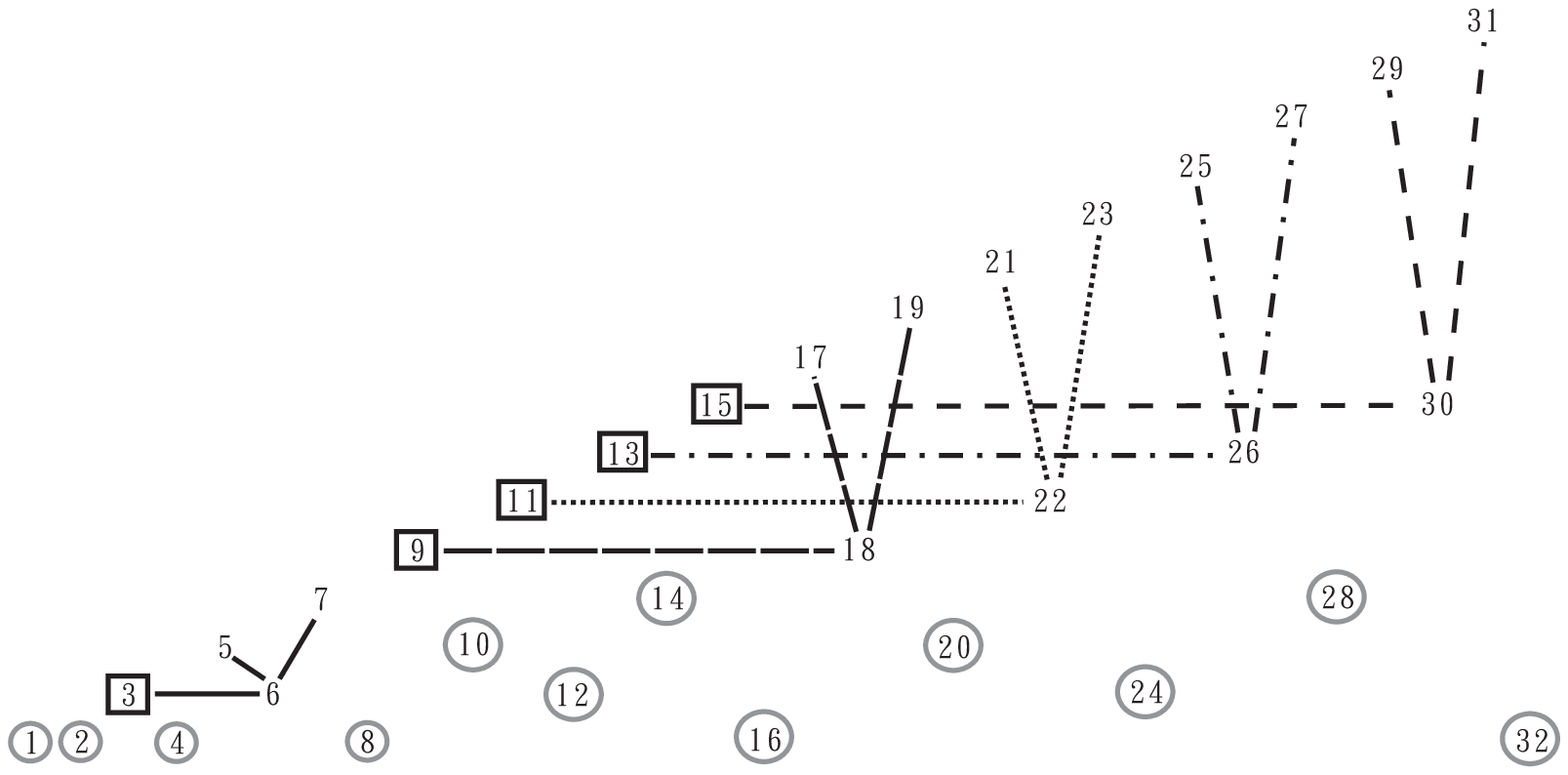}
\end{equation*}
\begin{equation*}
\text{Figure 3.3.}
\end{equation*}

The results for disjoint copies in $\left(1,2^{n}\right)$ can be proven as follows.

\begin{lemma}
\label{lemma:3.1}
Given $k\geq 1$ and $n\geq k+1$, define

\begin{equation}\label{eqn:3.5-50}
m^{*}= \left\lfloor\frac{n-1}{k}\right\rfloor -1.
\end{equation}
Then, within $\left(1,2^{n}\right)$, there are

 \begin{equation}\label{eqn:3.6}
\alpha_{k}(n)\equiv 2^{n-k-1}\left(1+\frac{1}{2^{k}}+\frac{1}{2^{2k}}+\cdots +\frac{1}{2^{m^{*}k}}\right)
 \end{equation}
mutually independent copies of $M_{k}(l)$ with $l\in\mathcal{I}_{2}$.

\end{lemma}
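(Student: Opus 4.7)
The plan is to exhibit an explicit collection of $\alpha_{k}(n)$ mutually independent copies by choosing their base points from carefully spaced geometric windows. For each $j\in\{0,1,\dots,m^{*}\}$, set $a_{j}=n-(j+1)k$ and let $\mathcal{L}_{j}=\mathcal{I}_{2}\cap (2^{a_{j}},2^{a_{j}+1})$; the choice $m^{*}=\lfloor(n-1)/k\rfloor-1$ is exactly what forces $a_{j}\ge 1$ for every admissible $j$, so each window is nonempty. Since for $a\ge 1$ the open interval $(2^{a},2^{a+1})$ contains $2^{a-1}$ odd integers, $|\mathcal{L}_{j}|=2^{n-(j+1)k-1}$, and the different windows are pairwise disjoint because consecutive ones are separated by the factor $2^{k}$. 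Summing over $j$ yields $|\mathcal{L}|=\sum_{j=0}^{m^{*}}2^{n-(j+1)k-1}=2^{n-k-1}\sum_{j=0}^{m^{*}}2^{-jk}=\alpha_{k}(n)$, which already accounts for the claimed count.

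The key structural input for the rest is the closed form $V_{j}(l)=\{2^{j-1}l+s : |s|\le 2^{j-1}-1\}$, obtained by a straightforward induction from the recursion $V_{1}(l)=\{l\}$, $V_{j+1}(l)=\{2i,\,2i\pm 1 : i\in V_{j}(l)\}$. Consequently $M_{k}(l)=\bigcup_{j=1}^{k}V_{j}(l)\subseteq[l,\,2^{k-1}(l+1)-1]$, and for $l\in\mathcal{L}_{j}$ the upper endpoint is at most $2^{k-1}\cdot 2^{a_{j}+1}-1=2^{n-jk}-1<2^{n}$, so each $M_{k}(l)$ lies in $(1,2^{n})$ as required. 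Before attacking pairwise independence I would dispatch the $2$-power clause of the independence definition: for $l\ge 3$ odd, the $2$-adic valuation of a nonzero $2^{j-1}l+s$ with $|s|\le 2^{j-1}-1$ equals $v_{2}(s)\le j-2$ and the resulting odd part strictly exceeds $1$, so $M_{k}(l)$ contains no power of $2$ and this clause is vacuous.

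For disjointness and non-adjacency, split into same-scale and cross-scale cases. Given $l,l'\in\mathcal{L}_{j}$ with $l<l'$, both odd, write $x=2^{i-1}l+s\in V_{i}(l)$ and $y=2^{i'-1}l'+s'\in V_{i'}(l')$; using the interval bound above and $l'-l\ge 2$, a short case analysis on $i$ versus $i'$ gives $|x-y|\ge 2$ in every configuration, with the tightest case ($i=i'$, $l'=l+2$) yielding a gap of exactly $2$. For $l\in\mathcal{L}_{j_{2}}$ and $l'\in\mathcal{L}_{j_{1}}$ with $j_{1}<j_{2}$, the maximum of $M_{k}(l)$ is at most $2^{n-j_{2}k}-1\le 2^{n-(j_{1}+1)k}-1$ while $\min M_{k}(l')=l'\ge 2^{n-(j_{1}+1)k}+1$, again leaving a gap of at least $2$. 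The main obstacle is identifying the correct spacing of the windows $\mathcal{L}_{j}$ (geometric with ratio $2^{k}$) so that both same-scale and cross-scale independence hold simultaneously; once this is set up, every remaining inequality reduces to an elementary endpoint computation with powers of two.
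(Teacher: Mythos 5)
Your proof is correct and follows essentially the same route as the paper's: the same geometric windows of base points $l\in\mathcal{I}_2\cap\left(2^{n-(j+1)k},2^{n-(j+1)k+1}\right)$, the same count $2^{n-(j+1)k-1}$ per window, and the same geometric sum, with you merely making explicit (via the closed form $V_j(l)=\{2^{j-1}l+s:|s|\le 2^{j-1}-1\}$) the disjointness and non-adjacency checks that the paper dismisses as ``easy to show.'' One small caution: in the same-window case with $i>i'$ the required inequality $2^{i-1}(l-1)\ge 2^{i'-1}(l'+1)$ rests on $l'\le 2l-3$, which comes from both base points lying in $\left(2^{a_j},2^{a_j+1}\right)$ and not merely from $l'-l\ge 2$ (indeed $M_2(3)$ and $M_2(5)$ intersect at $5$), so your case analysis must invoke the window containment rather than only the spacing of odd integers.
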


\begin{proof}


 We begin with $k=2$ and $k=3$. It is easy to show that in $\left(2^{n-2},2^{n}\right)$, each odd integer $l\in\left(2^{n-2},2^{n-1}\right)\cap\mathcal{I}_{2}$ can produce a $M_{2}(l)$ that lies in $\left(2^{n-2},2^{n}\right)$ and they are all disjoint; see Fig. 3.4. Therefore, there are totally $2^{n-3}$ copies of $M_{2}$. Similarly, as in Fig. 3.5,  between $\left(2^{n-3},2^{n}\right)$, each $l\in\left(2^{n-3},2^{n-2}\right)\cap\mathcal{I}_{2}$ produces a $M_{3}(l)$ that lies in $\left(2^{n-3},2^{n}\right)$. They are all disjoint too. The total number of copies of $M_{3}(l)$ in $\left(2^{n-3},2^{n}\right)$ is $2^{n-4}$.

\begin{equation*}
\begin{array}{cc}
\psfrag{a}{{\scriptsize$2^{n-2}$}}
\psfrag{b}{{\scriptsize$2^{n-1}$}}
\psfrag{c}{{\scriptsize$2^{n}$}}
\psfrag{f}{{\scriptsize$2^{n-4}$}}
\psfrag{e}{{\scriptsize$2^{n-3}$}}
\psfrag{d}{{\scriptsize$\mathcal{I}_{2}$}}
\includegraphics[scale=0.8]{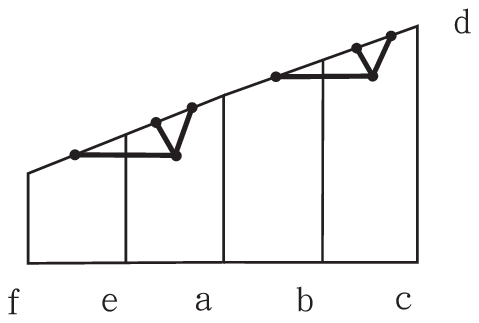}  & \hspace{1.3cm}\psfrag{a}{{\scriptsize$2^{n-2}$}}
\psfrag{b}{{\scriptsize$2^{n-1}$}}
\psfrag{c}{{\scriptsize$2^{n}$}}
\psfrag{e}{{\scriptsize$2^{n-3}$}}
\psfrag{f}{{\scriptsize$2^{n-4}$}}
\psfrag{g}{{\scriptsize$2^{n-5}$}}
\psfrag{h}{{\scriptsize$2^{n-6}$}}
\psfrag{d}{{\scriptsize$\mathcal{I}_{2}$}}
\includegraphics[scale=0.8]{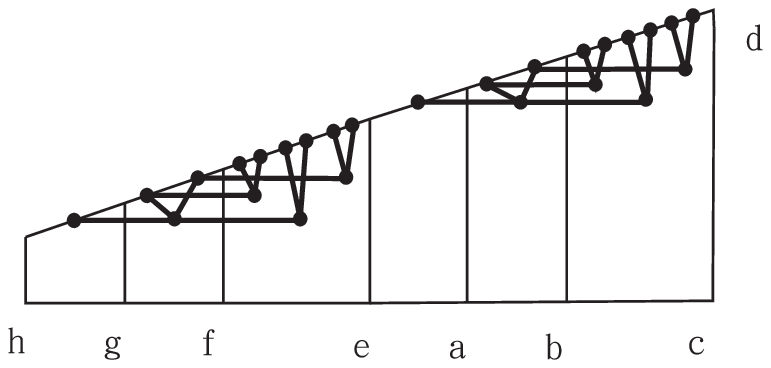}\\
& \\
\text{Figure 3.4.} & \hspace{1.3cm}\text{Figure 3.5.}
\end{array}
\end{equation*}
By using the similar argument, it can be verified that for any $k\geq 2$ and $0\leq l\leq m^{*}$, within $\left(2^{n-k(l+1)},2^{n-kl}\right)$, the number of disjoint copies of $M_{k}$ is

 \begin{equation}\label{eqn:3.6-10}
2^{n-k(l+1)-1}.
 \end{equation}
Therefore,
\begin{equation*}
\begin{array}{rl}
\alpha_{k}(n)= & \underset{l=0}{\overset{m^{*}}{\sum}}2^{n-k(l+1)-1} \\
& \\
= & 2^{n-k-1}\left(1+\frac{1}{2^{k}}+\frac{1}{2^{2k}}+\cdots +\frac{1}{2^{m^{*}k}}\right).
\end{array}
\end{equation*}
The proof is complete.
\end{proof}

Finally, for step (III), denote by $\Sigma_{k}$ the admissible patterns on $L_{k}$. Then, Lemma 3.4 yields the lower bound of the entropy $h(\mathbb{X}_{2}^{A})$.

\begin{lemma}
\label{lemma:3.2}
For any $k\geq 1$,

 \begin{equation}\label{eqn:3.7}
\frac{1}{2\left(2^{k}-1\right)}log|\Sigma_{k}|\leq h(\mathbb{X}_{2}^{A}).
 \end{equation}
\end{lemma}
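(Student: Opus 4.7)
The plan is to exhibit many disjoint, mutually independent copies of $M_k(l)$ inside $\mathcal{N}(2^n)$, count the admissible patterns generated on their union, and push the resulting lower bound on $|X_{2^n}|$ through to the entropy by letting $n\to\infty$.

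First, fix $k\geq 1$ and for each $n\geq k+1$ invoke Lemma \ref{lemma:3.1} to extract $\alpha_k(n)=2^{n-k-1}\bigl(1+2^{-k}+2^{-2k}+\cdots+2^{-m^{*}k}\bigr)$ pairwise mutually independent copies $M_k(l_1),\ldots,M_k(l_{\alpha_k(n)})$ lying inside $\left(1,2^{n}\right)$. Any two of these copies satisfy Definition \ref{definition:3.0} since they are disjoint and, by construction in Lemma \ref{lemma:3.1}, separated enough that no two of their elements are consecutive integers nor consecutive in $2$-power. Hence, iterating Lemma \ref{lemma:3.0-1} across the $\alpha_k(n)$ copies,
$$\Bigl|\Sigma\Bigl(\bigcup_{i=1}^{\alpha_k(n)} M_k(l_i)\Bigr)\Bigr|=\prod_{i=1}^{\alpha_k(n)} |\Sigma(M_k(l_i))|=|\Sigma_k|^{\alpha_k(n)}.$$

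Next, every admissible configuration on $\bigcup_i M_k(l_i)$ extends to an admissible element of $X_{2^{n}}$ by assigning the symbol $0$ to every cell in $\mathcal{N}(2^{n})\setminus\bigcup_i M_k(l_i)$: the multiplicative constraint $x_jx_{2j}=0$ is trivially satisfied on the completion, and the transitions demanded by $\Sigma_A$ are respected because the symbol $0$ is freely extendable in the proper additive shift. Thus the restriction map produces an injection $\Sigma(\bigcup_i M_k(l_i))\hookrightarrow X_{2^{n}}$, yielding the key bound $|X_{2^{n}}|\geq |\Sigma_k|^{\alpha_k(n)}$.

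Finally, taking $\frac{1}{2^{n}}\log$ of both sides and letting $n\to\infty$ (so $m^{*}\to\infty$), the geometric sum tends to $\frac{1}{1-2^{-k}}=\frac{2^{k}}{2^{k}-1}$, giving
$$\frac{\alpha_k(n)}{2^{n}}\longrightarrow\frac{1}{2^{k+1}}\cdot\frac{2^{k}}{2^{k}-1}=\frac{1}{2(2^{k}-1)}.$$
Since the entropy $h(\mathbb{X}_2^A)$ is recovered along the subsequence $n\mapsto 2^{n}$, we conclude $h(\mathbb{X}_2^A)\geq\frac{1}{2(2^{k}-1)}\log|\Sigma_k|$. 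The delicate step is the zero-completion, which requires that $\Sigma_A$ admit the all-zero pattern as a freely extendable background; once that is granted, the argument is a routine three-step execution of (I) identifying the admissible lattice $M_k$, (II) counting its independent copies via Lemma \ref{lemma:3.1}, and (III) multiplying the local counts.
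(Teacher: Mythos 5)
Your proposal is correct and follows essentially the same route as the paper: extract the $\alpha_k(n)$ mutually independent copies of $M_k(l)$ from Lemma \ref{lemma:3.1}, fill the unused cells with the symbol $0$ to get $|X_{2^n}|\geq|\Sigma_k|^{\alpha_k(n)}$, and pass to the limit $\alpha_k(n)/2^n\to\frac{1}{2(2^k-1)}$. You are somewhat more explicit than the paper about the two points it leaves implicit (the iterated use of Lemma \ref{lemma:3.0-1} and the fact that the zero-completion must be compatible with $\Sigma_A$), but the argument is the same.
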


\begin{proof}
Within $[1,2^{n}]$, we put symbol 0 on the cells that are not used in deriving (\ref{eqn:3.6}).
Then,
\begin{equation*}
|X_{2^{n}}|\geq |\Sigma_{k}|^{\alpha_{k}(n)}.
\end{equation*}
Now, it is easy to see
\begin{equation*}
\underset{n\rightarrow \infty}{\lim}\frac{\alpha_{k}(n)}{2^{n}}=\frac{1}{2\left(2^{k}-1\right)}.
\end{equation*}

Therefore,
\begin{equation*}
\begin{array}{rl}
h(\mathbb{X}_{2}^{A})= & \underset{n\rightarrow \infty}{\lim}\frac{1}{2^{n}}\log \left|X_{2^{n}}\right| \\
& \\
\geq & \left(\underset{n\rightarrow \infty}{\lim}\frac{\alpha_{k}(n)}{2^{n}} \right)\log \left|\Sigma_{k}\right| \\
& \\
= & \frac{1}{2\left(2^{k}-1\right)}log|\Sigma_{k}|
\end{array}
\end{equation*}
The proof is complete.
\end{proof}

Now, a good upper estimate of the entropy $h(\mathbb{X}_{2}^{A})$ remains to be found. From the derivation of Lemma 3.4, we need to estimate the contribution to entropy of the vertices that are not used in deriving (\ref{eqn:3.6}). The following lemma is obtained.

\begin{lemma}
\label{lemma:3.3}
For each $k\geq 1$, the number of the vertices of $L_{k}$ satisfies

 \begin{equation}\label{eqn:3.8}
 \begin{array}{rl}
|L_{k}|= & k+2|L_{k-1}| \\
& \\
=&2\left(2^{k}-1\right)-k.
\end{array}
 \end{equation}
Within $[1,2^{n}]$, the number of the vertices that are not used in deriving (\ref{eqn:3.6}) is

\begin{equation}\label{eqn:3.10}
 \frac{k}{2\left(2^{k}-1\right)}\left(2^{n}-2^{n-k(m^{*}+1)}\right)+2^{n-k(m^{*}+1)}.
\end{equation}
Moreover,

\begin{equation}\label{eqn:3.11}
h(\mathbb{X}_{2}^{A})\leq \frac{1}{2\left(2^{k}-1\right)}\log|\Sigma_{k}|+\frac{k}{2\left(2^{k}-1\right)}\log 2,
\end{equation}
where $|\Sigma_{k}|$ is the number of all admissible patterns on $L_{k}$.

\end{lemma}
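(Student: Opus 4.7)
The plan is to establish the three assertions of Lemma 3.3 in sequence, building on the geometry already set up in Definition 3.2 and Lemma 3.4.

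For the cardinality formula for $|L_k|$, I would argue by induction on $j$ that each generation set $V_j(l)$ is an interval of consecutive integers: if $V_j(l) = \{a, a+1, \ldots, b\}$, then $V_{j+1}(l) = \{2i, 2i\pm 1 : i \in V_j(l)\}$ is the interval $\{2a-1, 2a, \ldots, 2b, 2b+1\}$, so $|V_{j+1}(l)| = 2|V_j(l)| + 1$. Starting from $|V_1(l)| = 1$, this yields $|V_j(l)| = 2^j - 1$. For $l$ in the range relevant to Lemma 3.4 the successive $V_j$ lie in disjoint dyadic bands, so $M_k(l) = \bigcup_{j=1}^k V_j(l)$ is a disjoint union, and summing gives
\begin{equation*}
|L_k| = \sum_{j=1}^k (2^j - 1) = 2(2^k - 1) - k;
\end{equation*}
the recursion $|L_k| = k + 2|L_{k-1}|$ is then immediate.

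For the count of unused vertices in $[1, 2^n]$, I would work band by band using the decomposition from the proof of Lemma 3.4. Each band $B_\ell := (2^{n-k(\ell+1)}, 2^{n-k\ell}]$ has cardinality $2^{n-k(\ell+1)}(2^k - 1)$ and is packed with $2^{n-k(\ell+1)-1}$ disjoint copies of $M_k$, each of size $|L_k| = 2(2^k-1) - k$; subtracting gives $k\cdot 2^{n-k(\ell+1)-1}$ unused vertices per band. Summing the finite geometric series over $0 \le \ell \le m^{*}$ produces $\frac{k}{2(2^k-1)}(2^n - 2^{n-k(m^{*}+1)})$, to which I add the $2^{n-k(m^{*}+1)}$ vertices of the residual interval $[1, 2^{n-k(m^{*}+1)}]$, which is too short to contain any copy of $M_k$. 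This yields (3.10).

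For the upper bound on entropy, I would overestimate $|X_{2^n}|$ by discarding all constraints that involve unused vertices. Any $\omega \in X_{2^n}$ restricts to an element of $\Sigma_k$ on each $M_k(l)$ (since the constraints local to $M_k(l)$ are a subset of those satisfied by $\omega$) and to an arbitrary element of $\{0,1\}$ at each unused vertex. Injectivity of the joint restriction map gives $|X_{2^n}| \le |\Sigma_k|^{\alpha_k(n)} \cdot 2^{U(n,k)}$, where $U(n,k)$ is the count from (3.10). Taking $\frac{1}{2^n}\log$ and letting $n \to \infty$, the densities $\alpha_k(n)/2^n \to 1/[2(2^k-1)]$ (from Lemma 3.5) and $U(n,k)/2^n \to k/[2(2^k-1)]$ (since $m^{*} \to \infty$ forces $2^{n-k(m^{*}+1)}/2^n \to 0$) combine to give (3.11). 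The main obstacle is conceptual rather than computational: confirming that the restriction map is well-defined and injective, and that discarding the cross-constraints linking unused vertices to the copies of $M_k$ is a legitimate enlargement of the count. Once these observations are secured, the arithmetic and the limit are routine.
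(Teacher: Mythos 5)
Your proposal is correct and follows essentially the same route as the paper: the same induction for $|L_k|$, the same band-by-band count $\bigl(2^{n-kl}-2^{n-k(l+1)}\bigr)-2^{n-k(l+1)-1}|L_k|=k\,2^{n-k(l+1)-1}$ summed over $0\le l\le m^{*}$ plus the residual interval, and the same overcount $|X_{2^n}|\le|\Sigma_k|^{\alpha_k(n)}2^{U(n,k)}$ for the entropy bound. The only difference is that you spell out details the paper leaves implicit (the interval structure of $V_j(l)$ and the injectivity of the joint restriction map), which is fine but not a new argument.
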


\begin{proof}
(\ref{eqn:3.8}) is easily proved by induction. Then, for $0\leq l\leq m^{*}$, within $\left(2^{n-k(l+1)},2^{n-kl}\right]$, by (\ref{eqn:3.6-10}) and (\ref{eqn:3.8}), it can be verified that there are
\begin{equation*}
\begin{array}{rl}
& \left( 2^{n-kl}-2^{n-k(l+1)}\right)-\left[2^{n-k(l+1)-1}\left(2(2^{k}-1)-k\right)\right]  \\
 & \\
= & k \left(2^{n-k(l+1)-1}\right)
\end{array}
\end{equation*}
vertices that are not used in deriving (\ref{eqn:3.6}). Since
\begin{equation*}
[1,2^{n}]=\left(\underset{l=0}{\overset{m^{*}}{\bigcup}}\left(2^{n-k(l+1)},2^{n-kl}\right] \right)\bigcup \left[ 1,2^{n-k(m^{*}+1)}\right].
\end{equation*}
It is easy to see that there are

\begin{equation*}
\begin{array}{rl}
& \left(\underset{l=0}{\overset{m^{*}}{\sum}} \hspace{0.1cm}k \left(2^{n-k(l+1)-1}\right)\right)+2^{n-k(m^{*}+1)} \\
& \\
= & \frac{k}{2\left(2^{k}-1\right)}\left(2^{n}-2^{n-k(m^{*}+1)}\right)+2^{n-k(m^{*}+1)}
\end{array}
\end{equation*}
vertices in $[1,2^{n}]$ that are not used in deriving (\ref{eqn:3.6}).

Since two symbols 0 and 1 may appear on vertices in (\ref{eqn:3.10}), the contribution to the entropy of these unused vertices is at most

\begin{equation*}
\begin{array}{rl}
& \underset{n\rightarrow\infty}{\lim}\left[\left( \frac{k}{2\left(2^{k}-1\right)}\left(2^{n}-2^{n-k(m^{*}+1)}\right)+2^{n-k(m^{*}+1)} \right)\log2\right]/2^{n} \\ & \\= & \frac{k}{2(2^{k}-1)}\log 2.
\end{array}
\end{equation*}
The upper estimate (\ref{eqn:3.11}) of entropy follows.

\end{proof}

Lemmas 3.5 and 3.6 yield the following result.

\begin{theorem}
\label{theorem:3.4}
The entropy $h(\mathbb{X}_{2}^{A})$ is given by

\begin{equation}\label{eqn:3.12}
h(\mathbb{X}_{2}^{A})= \underset{k\rightarrow\infty}{\lim} \frac{1}{2\left(2^{k}-1\right)}\log|\Sigma_{k}|.
\end{equation}
Furthermore,

\begin{equation}\label{eqn:3.13}
\frac{1}{2\left(2^{k}-1\right)}\log|\Sigma_{k}|\leq h(\mathbb{X}_{2}^{A})\leq \frac{1}{2\left(2^{k}-1\right)}\log|\Sigma_{k}|+\frac{k}{2\left(2^{k}-1\right)}\log 2.
\end{equation}

\end{theorem}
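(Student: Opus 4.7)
The plan is straightforward: the two-sided estimate \eqref{eqn:3.13} is simply the concatenation of Lemma \ref{lemma:3.2} (giving the lower bound $\frac{1}{2(2^{k}-1)}\log|\Sigma_{k}|\leq h(\mathbb{X}_{2}^{A})$) and Lemma \ref{lemma:3.3} (giving the upper bound $h(\mathbb{X}_{2}^{A})\leq \frac{1}{2(2^{k}-1)}\log|\Sigma_{k}|+\frac{k}{2(2^{k}-1)}\log 2$). So the first step is just to cite these two lemmas and observe that the inequality \eqref{eqn:3.13} holds for every $k\geq 1$.

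For the limit formula \eqref{eqn:3.12}, I would rewrite \eqref{eqn:3.13} as
\begin{equation*}
h(\mathbb{X}_{2}^{A})-\frac{k}{2(2^{k}-1)}\log 2 \;\leq\; \frac{1}{2(2^{k}-1)}\log|\Sigma_{k}| \;\leq\; h(\mathbb{X}_{2}^{A}).
\end{equation*}
The error term $\frac{k}{2(2^{k}-1)}\log 2$ manifestly tends to $0$ as $k\to\infty$, so the squeeze theorem yields
\begin{equation*}
\lim_{k\to\infty}\frac{1}{2(2^{k}-1)}\log|\Sigma_{k}| = h(\mathbb{X}_{2}^{A}),
\end{equation*}
which is exactly \eqref{eqn:3.12}, and simultaneously confirms existence of the limit.

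There is no real obstacle at this stage of the argument, since the essential work was carried out in Lemmas \ref{lemma:3.1}, \ref{lemma:3.2} and \ref{lemma:3.3}: the combinatorial count of mutually independent copies of $M_{k}(l)$ in $[1,2^{n}]$, the lower bound from considering only admissible patterns placed on the decoupled copies of $L_{k}$ (with $0$'s on the unused cells), and the matching upper bound obtained by crudely assigning at most $2$ symbols to each of the unused vertices. The present theorem is only the packaging step: it reads off \eqref{eqn:3.13} from the two lemmas and then closes the sandwich by noting $k/(2^{k}-1)\to 0$. Accordingly the proof will be very short, essentially two sentences.
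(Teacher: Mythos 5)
Your proposal is correct and coincides with the paper's own proof, which simply states that Lemmas \ref{lemma:3.2} and \ref{lemma:3.3} imply \eqref{eqn:3.13} and that \eqref{eqn:3.12} then follows immediately. Your additional remark that the error term $\frac{k}{2(2^{k}-1)}\log 2$ tends to $0$ as $k\to\infty$ is exactly the (implicit) squeeze argument the paper relies on.
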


\begin{proof}
Lemmas 3.5 and 3.6 imply (\ref{eqn:3.13}), and then (\ref{eqn:3.12}) follows from (\ref{eqn:3.13}) immediately.
\end{proof}


\begin{example}
\label{example:3.4-1}
Consider the one-dimensional couple shifts $\mathbb{X}_{2}^{G}\equiv \mathbb{X}_{2}\bigcap \Sigma_{G}$ where

\begin{equation*}
G=\left[
\begin{array}{cc}
1 & 1 \\
1 & 0
\end{array}
\right]
\end{equation*}
and $\Sigma_{G}$ is the golden mean shift.

Table 3.1 presents a numerical approximation of (\ref{eqn:3.13}). For $n\geq 1$, let

\begin{equation*}
\begin{array}{ccc}
h^{(n)}(\mathbb{X}_{2}^{G})=\frac{1}{2\left(2^{n}-1\right)}\log|\Sigma_{n}| & \text{and} & \bar{h}^{(n)}(\mathbb{X}_{2}^{G})=h^{(n)}(\mathbb{X}_{2}^{G})+\frac{n}{2\left(2^{n}-1\right)}\log 2.
\end{array}
\end{equation*}

\begin{equation*}
\begin{tabular}{|c|c|c|c|}
\hline
 $n$  &2 & 3& 4
\\
\hline
 $|\Sigma_{n}|$   & 9 & 237 & 213624\\
\hline
$h^{(n)}(\mathbb{X}_{2}^{G})$   & 0.366204 & 0.390576  & 0.409066   \\
\hline
$\bar{h}^{(n)}(\mathbb{X}_{2}^{G})$   & 0.597253& 0.539107& 0.501485 \\
\hline
\end{tabular}
\end{equation*}

\begin{equation*}
\text{Table 3.1.}
\end{equation*}

\end{example}

\begin{remark}
\label{remark:3.5}
Whether or not $h(\mathbb{X}_{2}^{A})$ can be expressed in explicit form, as in (\ref{eqn:1.2}) for $h(\mathbb{X}_{2}^{0})$ is not clear.
\end{remark}

Now, the result for $\mathbb{X}_{2}^{A}$ is ready to extend to

\begin{equation}\label{eqn:3.14}
\mathbb{X}_{Q}^{A}=\left\{
(x_{1},x_{2},x_{3},\cdots )\in\{0,1\}^{\mathbb{N}} \mid x_{k}x_{Qk}=0 \text{ for all } k\geq 1 \text{ and }(x_{1},x_{2},\cdots )\in\Sigma_{A}
 \right\}
\end{equation}
for any integer $Q\geq 3$.


From the study of $\mathbb{X}_{2}^{A}$, the main steps for $\mathbb{X}_{Q}^{A}$ are:

\begin{enumerate}

\item[(I)$_{c}$] Identify the lattice $M_{k}$ ( and $L_{k}$), which is the maximal connected graph of degree $k$. All $M_{k}(l)$, $l\in\mathcal{I}_{Q}$, are disjoint.

\item[(II)$_{c}$]  Compute the unused vertices in

\begin{equation}\label{eqn:3.15}
\mathcal{N}(Q^{n})\setminus \underset{l\in\mathcal{I}_{Q}}{\bigcup}M_{k}(l).
\end{equation}

\item[(III)$_{c}$] Compute the number $\Sigma_{k}$ of admissible patterns on $L_{k}$.
\end{enumerate}

Step (I)$_{c}$ gives lower bound of $h(\mathbb{X}_{Q}^{A})$, and (II)$_{c}$ gives upper bound of $h(\mathbb{X}_{Q}^{A})$. Then, $h(\mathbb{X}_{Q}^{A})$ follows if the error term in (II)$_{c}$ approaches zero as $n$ tends to infinity.
To minimize the error in (II)$_{c}$, the amount of unused lattices should be as small as possible. Therefore, the choice of $M_{k}$ or the graph $L_{k}$ in (I)$_{c}$ should be as large as possible as far as they are decoupled.

$\mathbb{X}_{3}^{A}$ can be used to illustrate the procedures.
From

\begin{equation}\label{eqn:3.16}
\mathbb{N}=\underset{i\in\mathcal{I}_{3}}{\bigcup}i \mathbb{M}_{3},
\end{equation}
the following Fig. 3.6 is drawn, which is corresponding to Fig. 3.1 for $\mathbb{X}_{2}^{A}$.

\begin{equation*}
\includegraphics[scale=0.5]{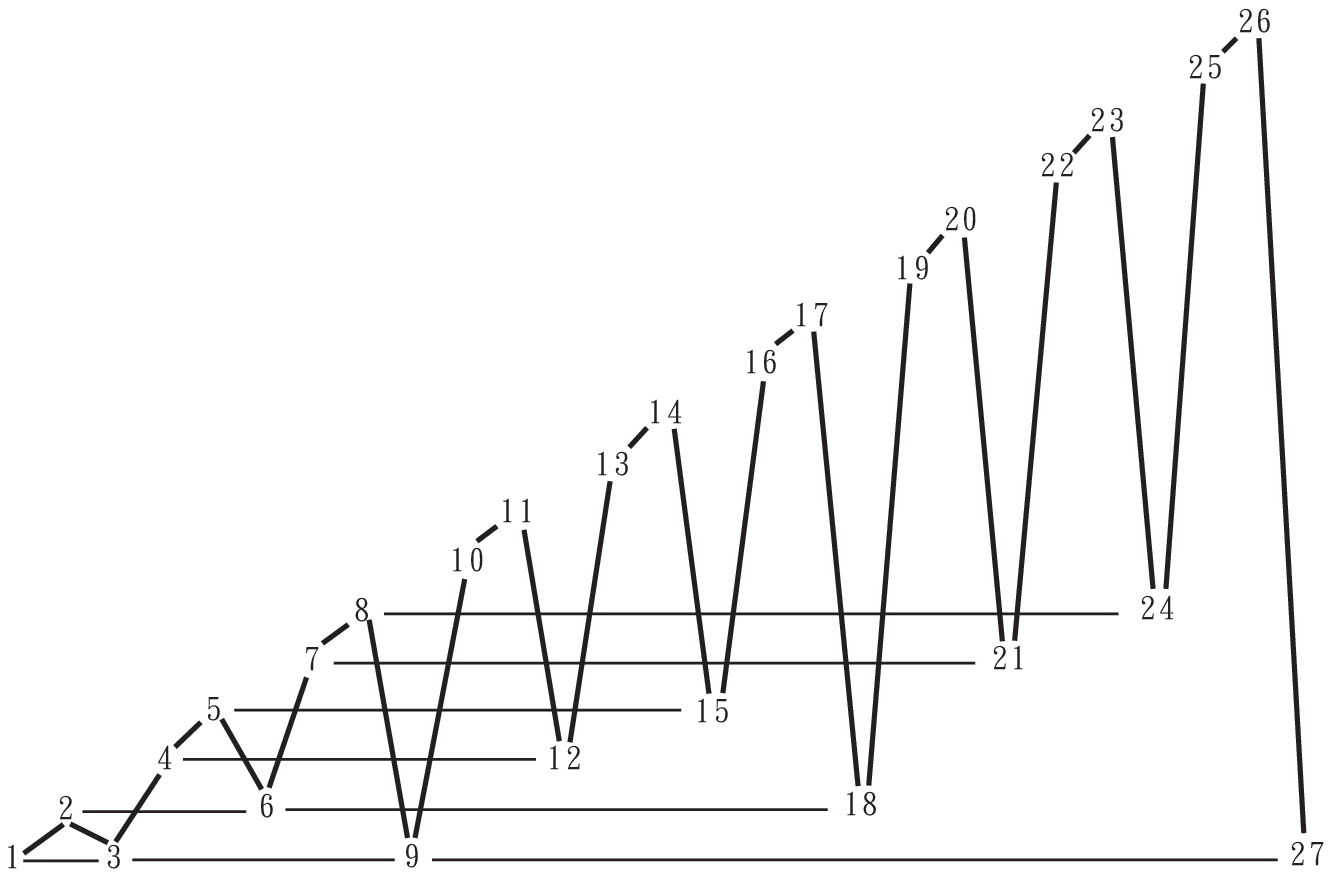}
\end{equation*}
\begin{equation*}
\text{Figure 3.6.}
\end{equation*}

Take $M_{3;1}$ as being on one vertex. In Fig. 3.7, a maximal connected graph $M_{3;2}$ with two horizontal vertices in $\underset{1<i\in\mathcal{I}_{3}}{\bigcup}i\mathbb{M}_{3}$ can be identified as follows. Notably, $M_{3;2}(4)$ and $M_{3;2}(7)$ are mutually independent.

\begin{equation*}
\begin{array}{ccccc}
\psfrag{a}{{\footnotesize $M_{3;2}(4)$}}
\includegraphics[scale=0.5]{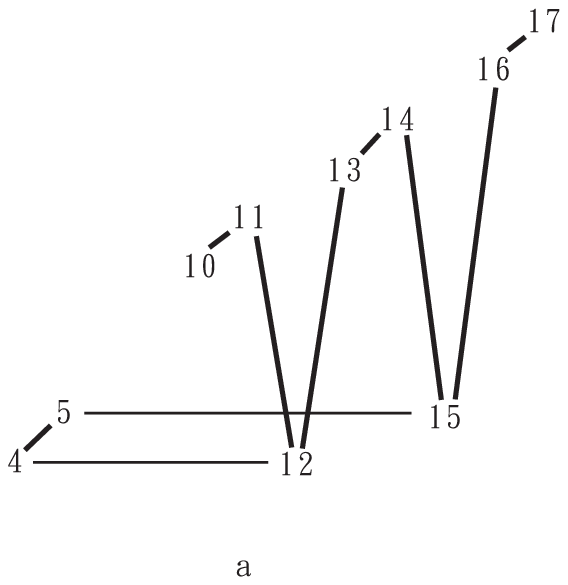} & &  &&
\psfrag{a}{{\footnotesize $M_{3;2}(7)$}}
\includegraphics[scale=0.5]{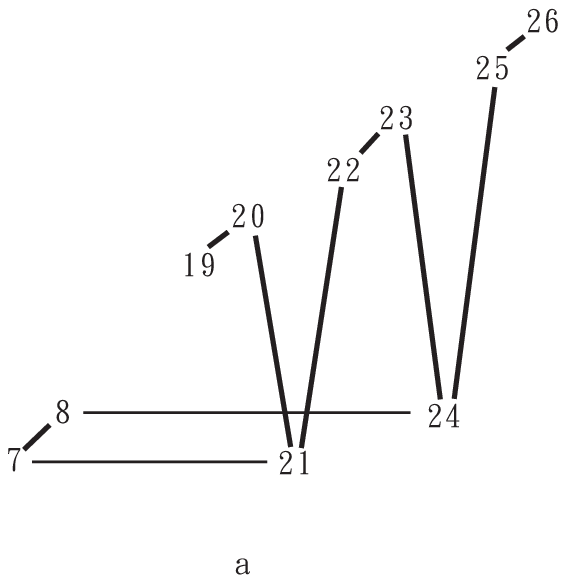}
\end{array}
\end{equation*}
\begin{equation*}
\text{Figure 3.7.}
\end{equation*}

Furthermore, $M_{3;3}(4)$ can be constructed from $M_{3;2}(4)$ as follows.

\begin{equation*}
\psfrag{a}{{\footnotesize $M_{3;3}(4)$}}
\includegraphics[scale=0.5]{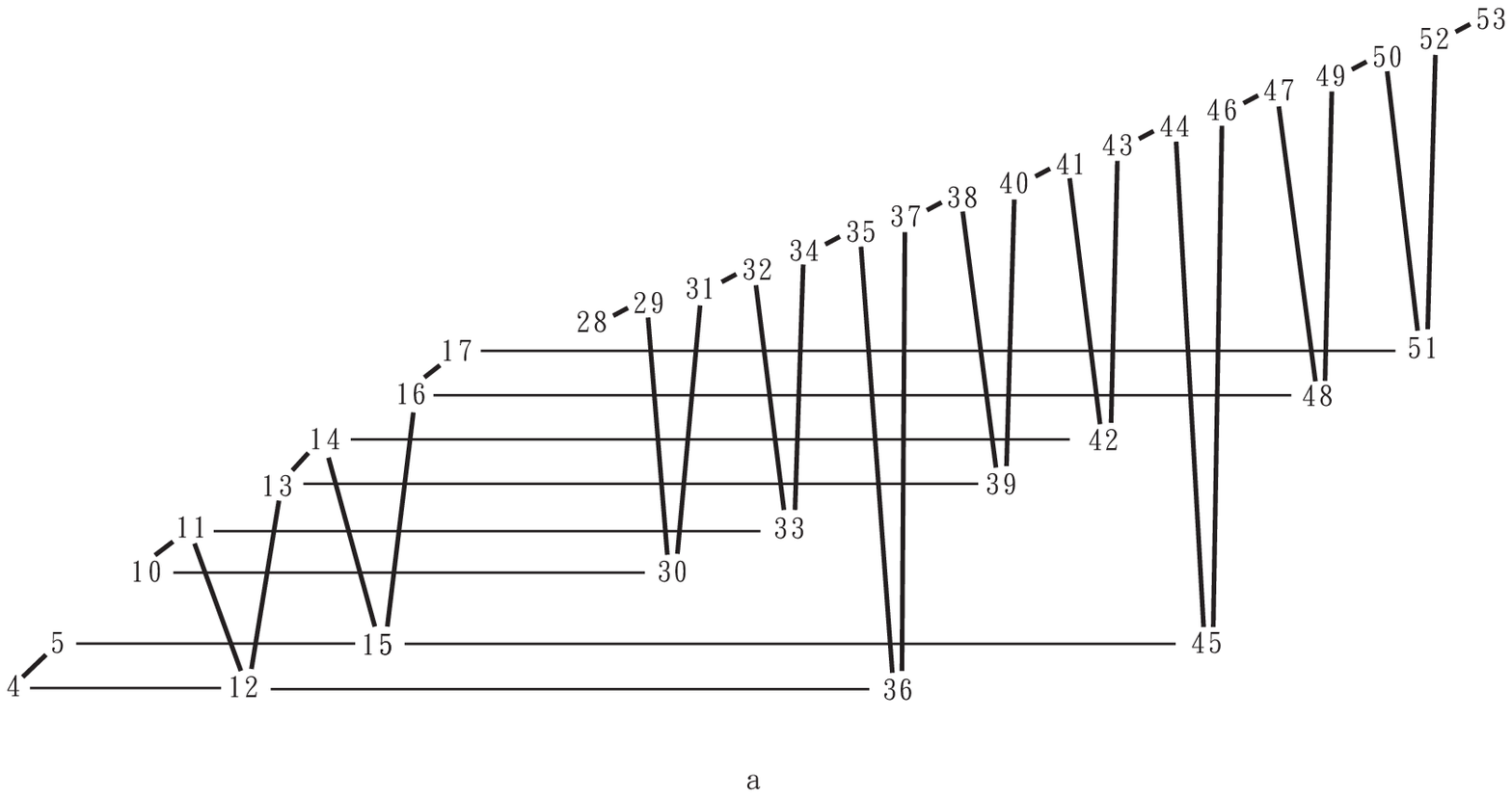}
\end{equation*}
\begin{equation*}
\text{Figure 3.8.}
\end{equation*}

Similar to Definition 3.3 for $Q=2$, for any $Q\geq 3$, $M_{Q;k}(l)$ can be defined as follows.

\begin{definition}
\label{Definition:3.6-10}
For any $Q\geq 3$ and $l\in \mathcal{I}_{Q}$, let $V_{Q;1}(l)=\{l,l+1,\cdots, l+Q-2\}$, and for $k\geq 1$, define

\begin{equation*}
V_{Q;k+1}(l)=\left\{ Qi, Qi\pm 1, Qi\pm 2, \cdots,Qi\pm (Q-1)   \hspace{0.2cm}\mid \hspace{0.2cm} i\in V_{Q;k}(l) \right\}.
\end{equation*}
Then, let $M_{Q;1}(l)=V_{Q;1}(l)$ and for $k\geq 2$, define

\begin{equation*}
M_{Q;k+1}(l)\equiv M_{Q;k}(l) \cup V_{Q;k+1}(l).
\end{equation*}

\end{definition}

For any $Q\geq 3$ and $k\geq 1$, denote $L_{Q;k}$ be the degree $k$ blank lattice. The following lemma gives the number of the vertices of $L_{Q;k}$, an extension of Lemma 3.6.

\begin{lemma}
\label{lemma:3.6}
For any $Q\geq 3$ and $k\geq 2$, the number $|L_{Q;k}|$ of the vertices of $L_{Q;k}$ is

\begin{equation}\label{eqn:3.17}
|L_{Q;k}|=\frac{Q\left(Q^{k}-1\right)}{Q-1}-k.
\end{equation}
\end{lemma}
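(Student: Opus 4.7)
The plan is to induct on $k$ using the recursive construction of $V_{Q;k}(l)$ in Definition 3.8, in complete parallel with the $Q=2$ argument of Lemma 3.6. The key intermediate claim is that, for any $l \in \mathcal{I}_Q$ large enough that successive generations avoid collisions, each $V_{Q;k}(l)$ is a block of $Q^k - 1$ consecutive integers and the blocks $V_{Q;1}(l), \ldots, V_{Q;k}(l)$ are pairwise disjoint. Once this is established, the abstract blank lattice $L_{Q;k}$ has
\[
|L_{Q;k}| \;=\; \sum_{j=1}^{k} (Q^j - 1) \;=\; \frac{Q(Q^k - 1)}{Q - 1} - k
\]
vertices, by the usual geometric-series identity.

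For the inductive step on cardinality, suppose $V_{Q;k}(l) = \{a, a+1, \ldots, a+m-1\}$ is a run of $m = Q^k - 1$ consecutive integers. By definition,
\[
V_{Q;k+1}(l) \;=\; \bigl\{\, Qi + j : a \le i \le a+m-1,\ -(Q-1) \le j \le Q-1 \,\bigr\}.
\]
The gap between successive centers $Qi$ and $Q(i+1)$ is $Q$, while each $Qi$ contributes the symmetric interval $[Qi - (Q-1), Qi + (Q-1)]$ of length $2Q - 1 \ge Q$, so the union is an unbroken run from $Qa - (Q-1)$ to $Q(a+m-1) + (Q-1)$, of cardinality $Qm + (Q-1)$. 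Thus $v_{k+1} = Qv_k + (Q-1)$ with $v_1 = Q-1$, whose closed-form solution is easily checked to be $v_k = Q^k - 1$.

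Disjointness of generations I would handle by tracking $s_k := \min V_{Q;k}(l)$ and $t_k := \max V_{Q;k}(l)$. The recursions $s_{k+1} = Qs_k - (Q-1)$ and $t_{k+1} = Qt_k + (Q-1)$ give the closed forms $s_k = (l-1)Q^{k-1} + 1$ and $t_k = (l + Q - 1)Q^{k-1} - 1$, consistent with $t_k - s_k + 1 = Q^k - 1$. The non-overlap condition $s_{k+1} > t_k$ then reduces to a simple linear inequality in $l$ that is satisfied for any $l \ge 3$ and $Q \ge 2$; since $L_{Q;k}$ is intrinsically the underlying abstract graph rather than depending on the numerical starting point, we may work with any valid $l$ in this range, and the vertex count is as claimed.

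The main obstacle is really only the bookkeeping for disjointness: ensuring that no integer belongs to two distinct generations $V_{Q;j}(l)$ and $V_{Q;j'}(l)$. Once this is in hand, the identity $|L_{Q;k}| = \frac{Q(Q^k-1)}{Q-1} - k$ is an elementary geometric sum, and the lemma follows along precisely the same lines as the $Q = 2$ case already treated in Lemma 3.6.
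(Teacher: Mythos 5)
Your proof is correct, and its skeleton --- induction on $k$ in which the $k$-th generation $V_{Q;k}(l)$ contributes $Q^{k}-1$ new vertices, followed by the geometric sum $\sum_{j=1}^{k}(Q^{j}-1)=\frac{Q(Q^{k}-1)}{Q-1}-k$ --- is the same as the paper's. The difference lies in how the increment is obtained. The paper reads the size of each generation off the pictures, grouping its vertices into pieces of sizes $a_{Q,j}=(Q-1)Q^{j-1}$, $1\le j\le k$, so that $|V_{Q;k}|=\sum_{j=1}^{k}a_{Q,j}=Q^{k}-1$; it carries this out explicitly only for $Q=3$ and small $k$ and dismisses the general case as a ``careful mathematical induction'' with details omitted. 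You instead prove directly that each $V_{Q;k}(l)$ is a run of consecutive integers whose length obeys $v_{k+1}=Qv_{k}+(Q-1)$, $v_{1}=Q-1$, giving $|V_{Q;k}|=Q^{k}-1$ in closed form for all $Q$ and $k$ simultaneously, and you explicitly check the point the paper never addresses: that distinct generations are disjoint as sets of integers (via $s_{k+1}>t_{k}$), so that $|L_{Q;k}|$ really equals the sum of the generation sizes even though adjacent parents share children within a generation. Your write-up is therefore somewhat more complete than the paper's; the only care needed is the one you already flag, namely that the interval and disjointness computations are performed for a specific admissible $l$ and then transferred to the abstract blank lattice $L_{Q;k}$, whose isomorphism type does not depend on the choice of $l$.
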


\begin{proof}
First, (\ref{eqn:3.17}) is proven for the case $Q=3$. The other cases can be treated analogously.

For $Q=3$, let

\begin{equation*}
a_{3,n}=2\cdot 3^{n}.
\end{equation*}

The blank lattice $L_{3;2}$ can be obtained from $M_{3;2}(4)$ in Fig. 3.7, and

\begin{equation*}
|L_{3;2}|=a_{3,1}+(a_{3,1}+a_{3,2})=10.
\end{equation*}
Now, $L_{3;3}$ is obtained from $M_{3;3}(4)$ in Fig. 3.8 and can be grouped as follows.

\begin{equation*}
\includegraphics[scale=0.3]{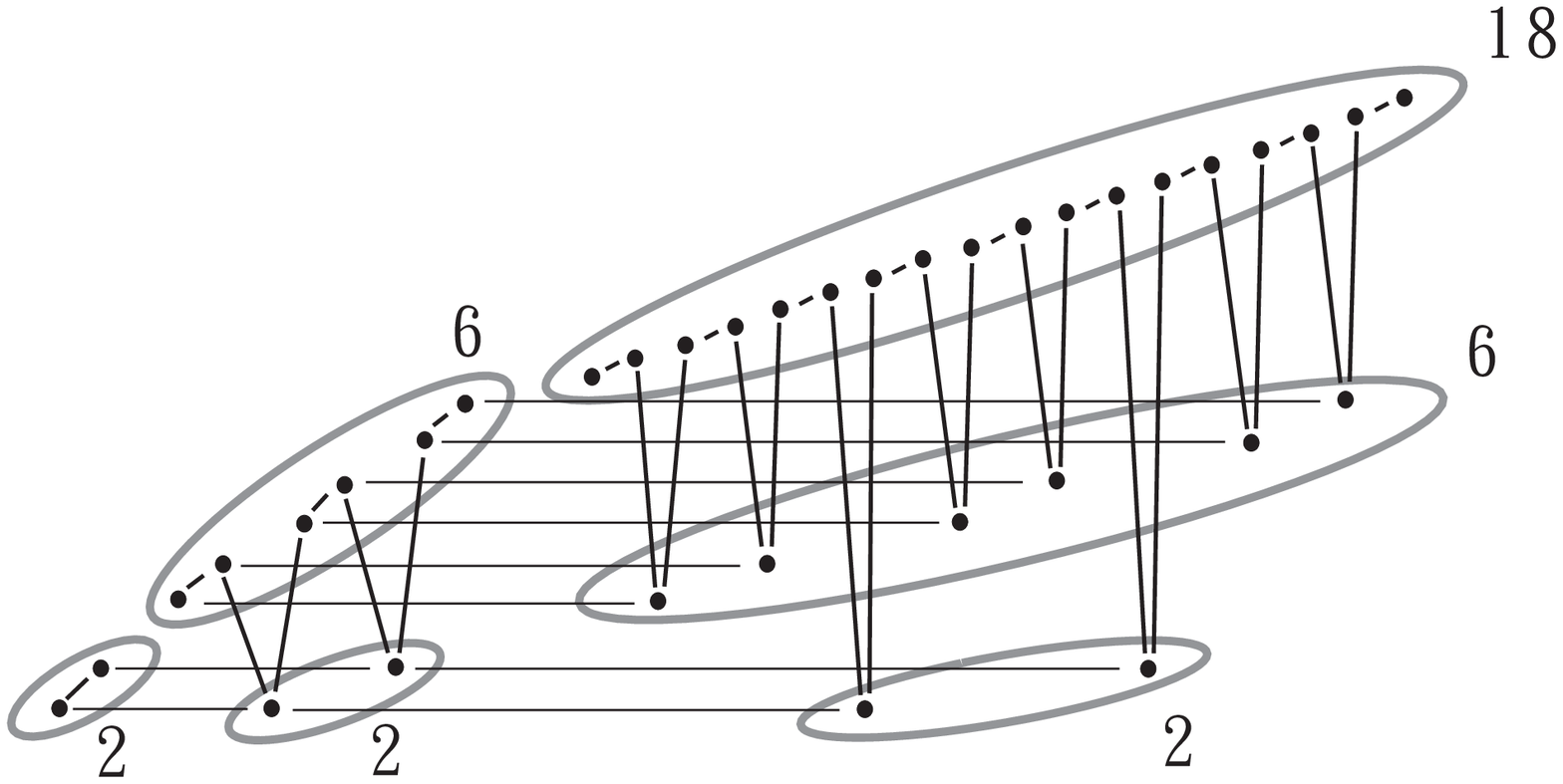}
\end{equation*}
\begin{equation*}
\text{Figure 3.9.}
\end{equation*}
According to Fig. 3.9, it is easy to see that

\begin{equation*}
\begin{array}{rl}
|L_{3;3}|= & \underset{l=1}{\overset{3}{\sum}}\underset{j=1}{\overset{l}{\sum}}a_{3,j} \\
 =& a_{3,1}+(a_{3,1}+a_{3,2})+(a_{3,1}+a_{3,2}+a_{3,3})\\
 & \\
 = & |L_{2;2}|+26.
\end{array}
\end{equation*}
By induction, it can be proven

\begin{equation}\label{eqn:3.18}
|L_{3;m}|=|L_{3;m-1}|+3^{m}-1.
\end{equation}
Therefore,
(\ref{eqn:3.17}) follows for $Q=3$.

For $Q=4$, $M_{4;2}(5)$ is as in Fig. 3.10.

\begin{equation*}
\psfrag{a}{{\footnotesize $M_{4;2}(5)$}}
\includegraphics[scale=0.5]{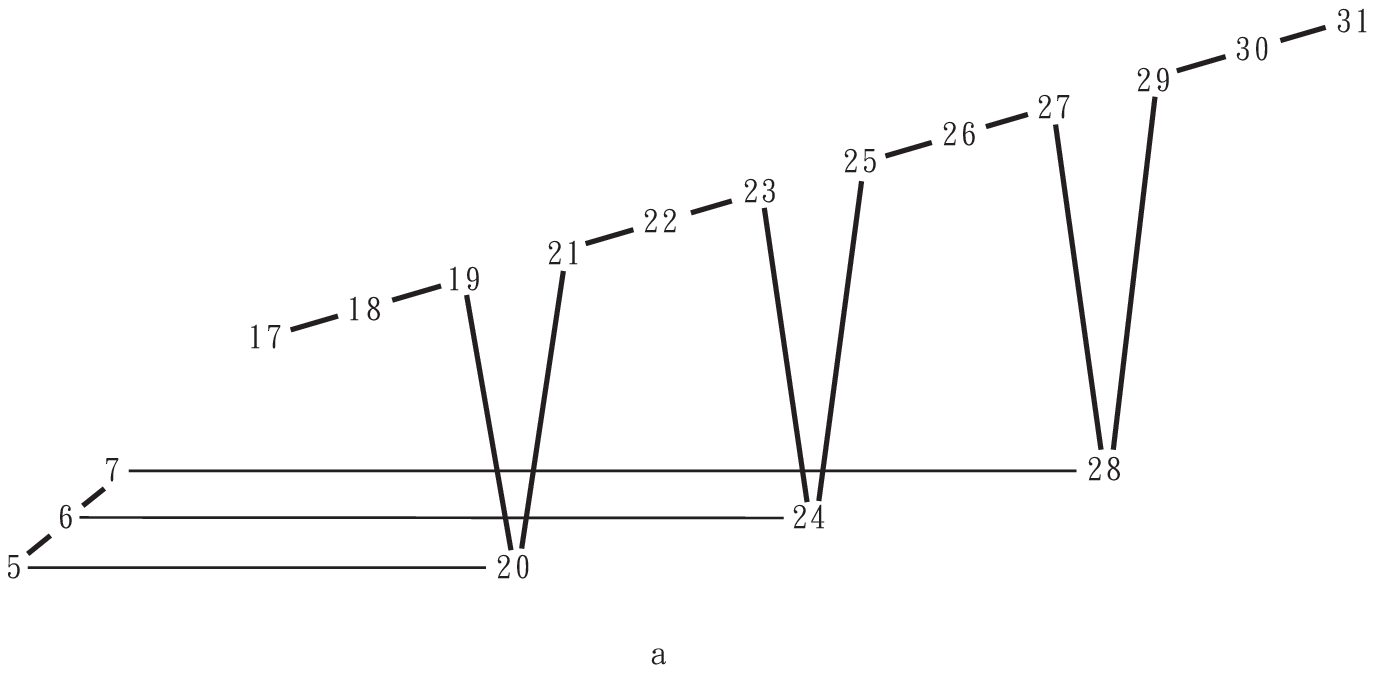}
\end{equation*}
\begin{equation*}
\text{Figure 3.10.}
\end{equation*}

Similarly, for any $Q\geq 4$, $M_{Q;2}(l)$ is given in Fig. 3.11.

\begin{equation*}
\psfrag{a}{}
\psfrag{1}{{\tiny $l$}}
\psfrag{2}{{\tiny $l+1$}}
\psfrag{3}{{\tiny $l+2$}}
\psfrag{4}{{\tiny $l+Q-2$}}
\psfrag{5}{{\tiny $Ql$}}
\psfrag{6}{{\tiny $Q(l+1)$}}
\psfrag{7}{{\tiny $Q(l+2)$}}
\psfrag{8}{{\tiny $Q(l+Q-2)$}}
\psfrag{b}{{\tiny $c_{1}$}}
\psfrag{c}{{\tiny $c_{2}$}}
\psfrag{d}{{\tiny $c_{3}$}}
\psfrag{e}{{\tiny $c_{4}$}}
\psfrag{f}{{\tiny $c_{5}$}}
\psfrag{g}{{\tiny $c_{6}$}}
\psfrag{h}{{\tiny $c_{7}$}}
\psfrag{j}{{\tiny $c_{8}$}}
\psfrag{k}{{\tiny $c_{9}$}}
\psfrag{l}{{\tiny $c_{10}$}}
\includegraphics[scale=0.7]{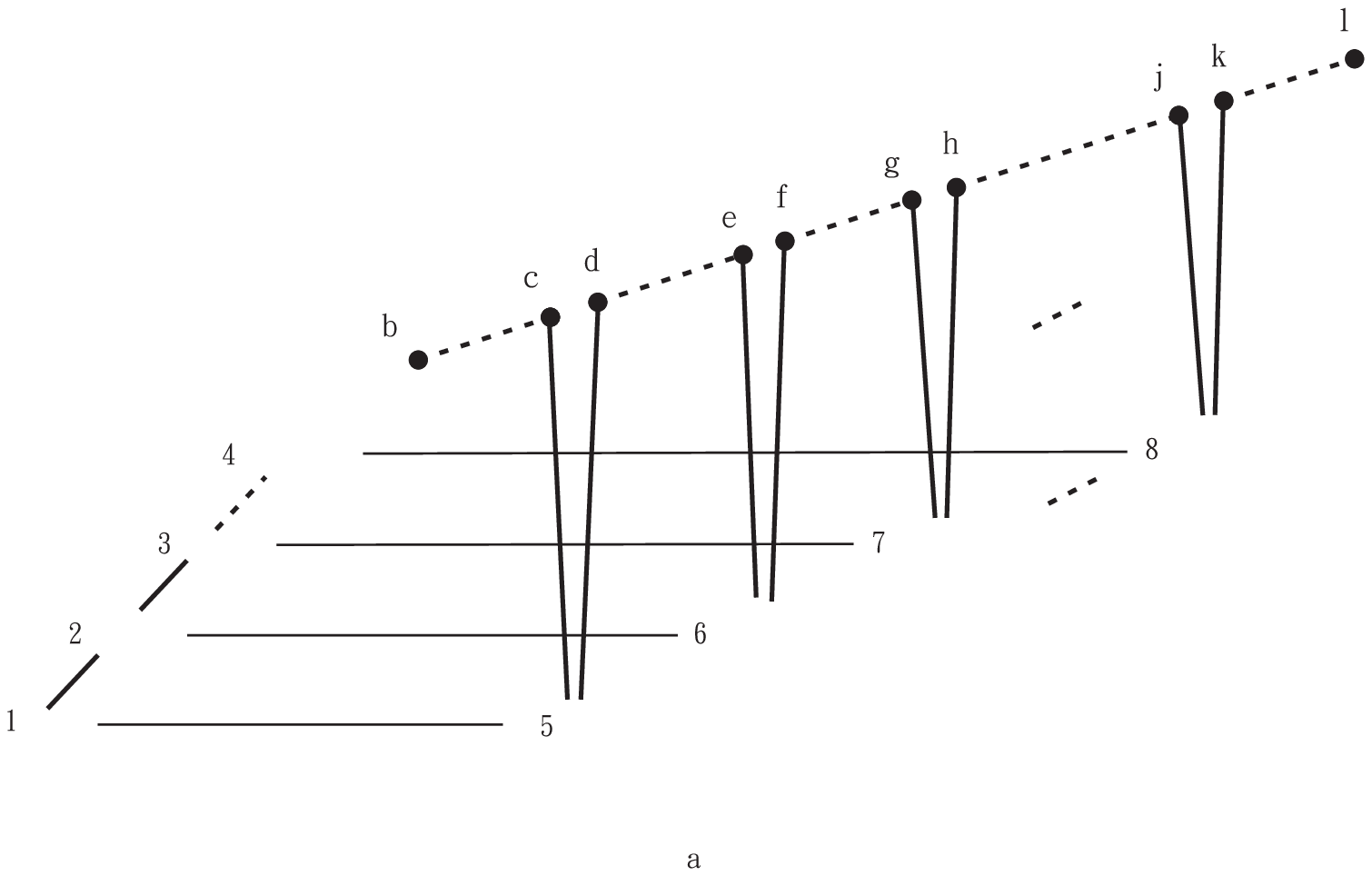}
\end{equation*}
{\footnotesize
\begin{equation*}
\begin{array}{lllll}
c_{1}= Q(l-1)+1,&  c_{2}=Ql-1, &  c_{3}= Ql+1, &  c_{4}= Q(l+1)-1, & c_{5}= Q(l+1)+1, \\
c_{6}= Q(l+2)-1, &  c_{7}= Q(l+2)+1, &  c_{8}= Q(l+Q-2)-1, &  c_{9}= Q(l+Q-2)+1, & c_{10}= Q(l+Q-1)-1 \\
\end{array}
\end{equation*}
}
\begin{equation*}
M_{Q;2}(l)
\end{equation*}
\begin{equation*}
\text{Figure 3.11. }
\end{equation*}

Define

\begin{equation*}
a_{Q,n}=(Q-1)Q^{n-1}.
\end{equation*}
Careful mathematical induction proves (\ref{eqn:3.17}). The details are omitted here.
\end{proof}

Now, we can extend Theorem 3.7 to all $Q\geq 3$, as follows.

\begin{theorem}
\label{theorem:3.7}
For any $Q\geq 3$ and $k\geq 2$,

\begin{equation}\label{eqn:3.19}
\frac{Q-1}{Q\left(Q^{k}-1\right)}\log|\Sigma_{Q;k}|\leq h(\mathbb{X}_{Q}^{A})\leq\frac{Q-1}{Q\left(Q^{k}-1\right)}\left(\log|\Sigma_{Q;k}|+k\log 2\right),
\end{equation}
and

\begin{equation}\label{eqn:3.20}
h(\mathbb{X}_{Q}^{A}) = \underset{k\rightarrow \infty}{\lim}\frac{Q-1}{Q\left(Q^{k}-1\right)}\log|\Sigma_{Q;k}|,
\end{equation}
where $\Sigma_{Q;k}$ is the set of all admissible patterns on $L_{Q;k}$.
\end{theorem}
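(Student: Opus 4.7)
The plan is to mirror the proof of Theorem 3.7 ($Q=2$) by producing two lemmas analogous to Lemmas 3.5 and 3.6, and then squeezing. The inputs I would use are Definition 3.8 (the recursive construction of the maximal connected graphs $M_{Q;k}(l)$) together with Lemma 3.10 (the cell count $|L_{Q;k}|=\frac{Q(Q^{k}-1)}{Q-1}-k$).

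For the lower bound I would first establish an analogue of Lemma 3.4. Fix $k\geq 2$ and set $m^{*}=\lfloor (n-1)/k\rfloor-1$. In each band $(Q^{n-k(j+1)},Q^{n-kj}]$ with $0\leq j\leq m^{*}$, pick as seeds the integers $l$ in the bottom subrange $(Q^{n-k(j+1)},Q^{n-k(j+1)+1}]$ with $l\equiv 1\pmod{Q}$, so that the whole initial cluster $V_{Q;1}(l)=\{l,l+1,\dots,l+Q-2\}$ is contained in $\mathcal{I}_{Q}$. By Definition 3.8 the generated copy $M_{Q;k}(l)$ stays inside that band, and the number of such seeds per band is $(Q-1)Q^{n-k(j+1)-1}$. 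Summing the resulting geometric series gives
\begin{equation*}
\alpha_{Q,k}(n)=(Q-1)Q^{n-k-1}\sum_{j=0}^{m^{*}}Q^{-kj},\qquad \lim_{n\to\infty}\frac{\alpha_{Q,k}(n)}{Q^{n}}=\frac{Q-1}{Q(Q^{k}-1)}.
\end{equation*}
Since the copies are mutually independent, $|X_{Q^{n}}|\geq |\Sigma_{Q;k}|^{\alpha_{Q,k}(n)}$ after placing the symbol $0$ on every unused vertex, and passing to the limit yields the lower bound in (3.19).

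For the upper bound I would carry out the analogue of Lemma 3.6 by estimating the leftover vertices. Each band contains $(Q^{k}-1)Q^{n-k(j+1)}$ integers, of which the packed $M_{Q;k}$'s use $(Q-1)Q^{n-k(j+1)-1}|L_{Q;k}|$; by Lemma 3.10 the difference per band simplifies to $k(Q-1)Q^{n-k(j+1)-1}$. Adding the trivial contribution from the initial segment $[1,Q^{n-k(m^{*}+1)}]$ and summing the geometric series gives an asymptotic unused-vertex density of $\tfrac{k(Q-1)}{Q(Q^{k}-1)}$. Since each unused vertex carries at most $\log 2$ of topological entropy, this upgrades the lower bound to the two-sided estimate (3.19). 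The limit (3.20) then follows because the gap $\tfrac{(Q-1)k}{Q(Q^{k}-1)}\log 2$ between the two sides of (3.19) tends to $0$ as $k\to\infty$.

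The main obstacle will be the disjointness and independence verification at the start. For $Q=2$ the remark after Definition 3.3 tells us that $M_{m}(l)$ and $M_{m}(l')$ are independent once $|l-l'|\geq 2^{m+1}$, but for $Q\geq 3$ the vertical fanout in Definition 3.8 involves the $2Q-1$ offsets $0,\pm 1,\dots,\pm(Q-1)$, so the buffer condition is more delicate: one must show that neighbouring seeds $l\equiv l'\equiv 1\pmod Q$ chosen inside a single band produce descendants in each generation $V_{Q;i}(l)$ and $V_{Q;i}(l')$ that remain both lattice-disjoint and non-adjacent (in the sense of a natural generalisation of Definition 3.2). Once this combinatorial bookkeeping is handled, the remainder of the argument is elementary counting together with a single geometric sum.
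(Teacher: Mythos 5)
Your proposal follows essentially the same route as the paper's proof: the paper likewise packs $(Q-1)Q^{n-k(j+1)-1}$ mutually independent copies of $M_{Q;k}$ into each band $\left(Q^{n-k(j+1)},Q^{n-kj}\right)$, sums the geometric series to get $\alpha_{Q;k}(n)$, and then uses Lemma 3.10's count $|L_{Q;k}|=\frac{Q(Q^{k}-1)}{Q-1}-k$ to bound the leftover-vertex density by $\frac{k(Q-1)}{Q(Q^{k}-1)}$, exactly as you do. The disjointness bookkeeping you flag as the main obstacle is also left implicit in the paper (``it can be verified that\dots''), so your write-up is, if anything, slightly more explicit about the seed choice $l\equiv 1\pmod{Q}$.
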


\begin{proof}
 Given $k\geq 2$ and $n\geq 1$, $m^{*}$ is defined in (\ref{eqn:3.5-50}). As the proof of Lemma 3.4, it can be verified that for $0\leq l\leq m^{*}$, within $\left(Q^{n-k(l+1)},Q^{n-kl}\right)$, the number of mutually independent copies of $M_{Q;k}$ is $(Q-1)Q^{n-k(l+1)-1}$.

Hence, within $\left(1,Q^{n}\right)$, there are

 \begin{equation}\label{eqn:3.20-10}
\alpha_{Q;k}(n)\equiv (Q-1)Q^{n-k-1}\left(1+\frac{1}{Q^{k}}+\frac{1}{Q^{2k}}+\cdots +\frac{1}{Q^{m^{*}k}}\right)
 \end{equation}
disjoint copies of $M_{Q;k}(l)$ with $l\in\mathcal{I}_{Q}$.

As the proof of Lemma 3.6, by (\ref{eqn:3.17}), the number of the remaining vertices that are not used in $[1,Q^{n}]$ is

\begin{equation*}
\frac{(Q-1)k}{Q\left(Q^{k}-1\right)}\left(Q^{n}-Q^{n-k(m^{*}+1)}\right)+Q^{n-k(m^{*}+1)}.
\end{equation*}
Therefore, the results follow immediately.
\end{proof}

Like Theorem 2.10, Theorem 3.12 can be generalized to any number of symbols, any constraints $\mathcal{C}$ and any additive shifts of finite type $\Sigma_{A}$. Indeed, let

\begin{equation}\label{eqn:3.22}
\begin{array}{rl}
 & \mathbb{X}_{Q}^{A}(N,\mathcal{C}) \\ & \\
 = &\left\{
(x_{1},x_{2},x_{3},\cdots )\in\{0,1,\cdots,N-1\}^{\mathbb{N}} \mid x_{k}x_{Qk}\in \mathcal{C} \text{ for all } k\geq 1 \text{ and } (x_{1},x_{2},x_{3},\cdots )\in\Sigma_{A}
 \right\},
 \end{array}
\end{equation}
where $\mathcal{C}\subseteq \left\{0,1,\cdots,(N-1)^{d}\right\}$ is a constraint set and $A$ is an $m\times m$ \hspace{0.1cm}$0-1$ matrix.

Then, the following theorem can easily be obtained. The details of the proof are omitted.
\begin{theorem}
\label{theorem:3.7-10}
For any $Q\geq 2$, $\mathcal{C}\subseteq \left\{0,1,\cdots,(N-1)^{d}\right\}$, $N\geq 2$, $d\geq 1$ and $k\geq 2$,

\begin{equation}\label{eqn:3.23}
\frac{Q-1}{Q\left(Q^{k}-1\right)}\log|\Sigma_{k}(Q;A;N,\mathcal{C})|\leq h\left(\mathbb{X}_{Q}^{A}(N,\mathcal{C})\right)\leq\frac{Q-1}{Q\left(Q^{k}-1\right)}\left(\log|\Sigma_{k}(Q;A;N,\mathcal{C})|+k\log N\right),
\end{equation}
and

\begin{equation}\label{eqn:3.24}
h\left(\mathbb{X}_{Q}^{A}(N,\mathcal{C})\right) = \underset{k\rightarrow \infty}{\lim}\frac{Q-1}{Q\left(Q^{k}-1\right)}\log|\Sigma_{k}(Q;A;N,\mathcal{C})|,
\end{equation}
where $\Sigma_{k}(Q;A;N,\mathcal{C})$ is the set of all admissible patterns on $L_{Q;k}$, the constraint of the vertices on the bold lines in $L_{Q;k}$ is given by $A$ and  the constraint of the vertices on thin lines in $L_{Q;k}$ is given by $N$ and $\mathcal{C}$.
\end{theorem}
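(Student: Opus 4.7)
The plan is to mirror the three-step program (I)$_c$, (II)$_c$, (III)$_c$ that was already executed for $\mathbb{X}_{Q}^{A}$ in Theorem 3.12, and to track carefully the two places where the generalization from $\{0,1\}$ with the simple multiplicative constraint to $\{0,1,\ldots,N-1\}$ with the constraint set $\mathcal{C}$ and additive transition matrix $A$ actually enters. The lattice construction $M_{Q;k}(l)$ from Definition 3.11 and the vertex count $|L_{Q;k}|=\frac{Q(Q^{k}-1)}{Q-1}-k$ from Lemma 3.11 depend only on the multiplicative index structure of $\mathbb{N}$ relative to $Q$, not on the alphabet or on the particular local rules, so both are reused verbatim. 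What must be generalized is only the admissibility relation used to count patterns on a single copy of $L_{Q;k}$ and on the leftover vertices.

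First I would produce the lower bound. Following the proof of Lemma 3.5 and Theorem 3.12, for each $n$ and each $0\le l\le m^{*}$ with $m^{*}=\lfloor(n-1)/k\rfloor-1$, within $(Q^{n-k(l+1)},Q^{n-kl})$ one selects the $(Q-1)Q^{n-k(l+1)-1}$ mutually independent copies of $M_{Q;k}$ coming from the odd-$Q$ starting points in $\mathcal{I}_{Q}$, giving
\begin{equation*}
\alpha_{Q;k}(n)=(Q-1)Q^{n-k-1}\Bigl(1+\tfrac{1}{Q^{k}}+\cdots+\tfrac{1}{Q^{m^{*}k}}\Bigr)
\end{equation*}
disjoint copies in $(1,Q^{n}]$. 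On each such copy, the combined constraint reduces to membership in $\Sigma_{k}(Q;A;N,\mathcal{C})$ by the construction of $M_{Q;k}$: bold edges carry the $A$-constraint and thin edges carry the $\mathcal{C}$-constraint. Since distinct copies are multiplicatively disjoint \emph{and} are spaced in $\mathbb{Z}$ by more than the range of $A$ and the $Q$-adic reach that defines $\mathcal{C}$, filling the unused vertices with any fixed valid symbol (playing the role of the symbol $0$ in Lemma 3.5) yields
\begin{equation*}
|X_{Q^{n}}|\ \ge\ |\Sigma_{k}(Q;A;N,\mathcal{C})|^{\alpha_{Q;k}(n)},
\end{equation*}
and dividing by $Q^{n}$ and letting $n\to\infty$ produces the lower bound in \eqref{eqn:3.23}, using $\lim_{n\to\infty}\alpha_{Q;k}(n)/Q^{n}=(Q-1)/(Q(Q^{k}-1))$.

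Next I would produce the upper bound by the counting argument of Lemma 3.6 adapted for $N$ symbols. The number of vertices in $[1,Q^{n}]$ not covered by the chosen disjoint copies is
\begin{equation*}
\tfrac{(Q-1)k}{Q(Q^{k}-1)}\bigl(Q^{n}-Q^{n-k(m^{*}+1)}\bigr)+Q^{n-k(m^{*}+1)},
\end{equation*}
as in Theorem 3.12, by the identity $|L_{Q;k}|=\frac{Q(Q^{k}-1)}{Q-1}-k$ and the geometric-series summation over $0\le l\le m^{*}$. Each such unused vertex can take at most $N$ values, so
\begin{equation*}
|X_{Q^{n}}|\ \le\ |\Sigma_{k}(Q;A;N,\mathcal{C})|^{\alpha_{Q;k}(n)}\cdot N^{\text{(unused count)}}.
\end{equation*}
Taking $\frac{1}{Q^{n}}\log$ and $n\to\infty$, the second factor contributes at most $\frac{(Q-1)k}{Q(Q^{k}-1)}\log N$, which is exactly the $k\log N$ term in \eqref{eqn:3.23}. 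Finally, \eqref{eqn:3.24} follows because that error term tends to $0$ as $k\to\infty$, so the sandwiched quantity has a limit equal to $h(\mathbb{X}_{Q}^{A}(N,\mathcal{C}))$.

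The main obstacle I expect is in step (I)$_c$: verifying rigorously that the selected disjoint copies of $M_{Q;k}$ remain mutually independent under the \emph{combined} constraint imposed by $A$ and by $\mathcal{C}$. In the decoupled case this was the observation (cf. Lemma 3.2) that successive copies are neither consecutive integers nor $Q$-adic neighbours; for general $A$ acting on integer neighbours and general $\mathcal{C}$ acting on $Q$-adic neighbours, one must check that the inter-copy separation built into Definition 3.11 is strictly greater than the interaction ranges of both $A$ and $\mathcal{C}$, so that the pattern counts multiply. Once this independence is established, the decoupled counting, the geometric-series bookkeeping, and the limit passage are exactly as in Theorem 3.12, which is why the proof is only sketched in the statement.
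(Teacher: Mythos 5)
Your proposal is correct and follows exactly the route the paper intends: the paper omits the details of this proof entirely, stating only that it follows as in Theorems 2.10 and 3.12, and your reconstruction --- reusing Definition 3.11 and Lemma 3.11 for the lattice and vertex count, the copy count $\alpha_{Q;k}(n)$ and its limit $(Q-1)/(Q(Q^{k}-1))$ for the lower bound, and the unused-vertex estimate with $\log 2$ replaced by $\log N$ for the upper bound and the $k\to\infty$ limit --- is precisely the intended adaptation. Your closing remark correctly isolates the one point the paper leaves implicit (that the combined $A$- and $\mathcal{C}$-constraints still decouple across the chosen copies, and that admissible patterns on the copies extend to $[1,Q^{n}]$ by a safe filler symbol), which is the same gap already present in the paper's proof of Lemma 3.5.
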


\section{Multi-dimensional coupled systems}

\hspace{0.4cm}This section discusses the multi-dimensional coupled systems and points out the difficulties when applying the method that works well for one-dimensional coupled
systems.

The multi-dimensional coupled system is
\begin{equation}\label{eqn:4.1}
\mathbb{X}_{\Gamma}^{A}=  \mathbb{X}_{\Gamma}^{0}\bigcap \Sigma_{A},
\end{equation}
where $\mathbb{X}_{\Gamma}^{0}$ is a multiplicative integer system and $\Sigma_{A}$ is a shift of finite type. For clarity, only $\mathbb{X}_{2,3}^{A}$ is considered.

Recall the strategy for studying one-dimensional coupled system.
\begin{enumerate}
\item[(I)] For $k\geq 1$, choose a suitable admissible numbered lattice $M_{k}(l)$ as basic elements for $\mathbb{X}_{Q}^{A}$; see Fig. 3.2 for $\mathbb{X}_{2}^{A}$.

\item[(II)] For each $k\geq 1$, split the natural numbers $\mathbb{N}$ into two parts:

\begin{equation}\label{eqn:4.2}
\mathbb{N}=U_{k}\cup W_{k},
\end{equation}
where $U_{k}$ is used to select the mutually independent admissible numbered lattice $M_{k}(l)$ for approximating the entropy, and $W_{k}$ is the set of the cells that is removed from $\mathbb{N}$ to
achieve independence of $M_{k}(l)$ in $U_{k}$. Good splittings require

\begin{equation}\label{eqn:4.3}
\underset{k\rightarrow\infty}{\lim}\underset{n\rightarrow\infty}{\lim}\frac{\left|W_{k}\cap [1,n]\right|}{\left|U_{k}\cap [1,n]\right|}=0;
\end{equation}
see Lemmas 3.4 and 3.6 and Theorems 3.7 and 3.12.

\item[(III)] Finally, compute the number of members of the set

\begin{equation}\label{eqn:4.4}
\mathcal{M}_{k}=\left\{ M_{k}(l)\subset U_{k} \hspace{0.1cm} \mid \hspace{0.1cm}  l \in \mathcal{I}_{\Gamma} \right\}.
\end{equation}
Let

\begin{equation*}
\alpha_{k}(n)=\left|\mathcal{M}_{k} \cap [1,n] \right|,
\end{equation*}
and

\begin{equation*}
\alpha_{k}^{*}=\underset{n\rightarrow \infty }{\lim}\frac{\alpha_{k}(n)}{n}.
\end{equation*}
Then, the number $\alpha_{k}^{*} \log |\Sigma_{k}|$ is an approximation to the entropy of $\mathbb{X}_{Q}^{A}$, where $\Sigma_{k}$ is the set of all admissible patterns on $M_{k}$. For $\mathbb{X}_{2}^{A}$, see Lemma 3.4.

\end{enumerate}

For multi-dimensional cases, firstly observe topological effect of dimensionality. Consider the model system

\begin{equation}\label{eqn:4.5}
\mathbb{X}_{2,3}^{A}=\mathbb{X}_{2,3}^{0}\cap \Sigma_{A},
\end{equation}
where $\mathbb{X}_{2,3}^{0}$ is given by (\ref{eqn:2.1}) and $\Sigma_{A}$ is a shift of finite type. $M_{k}$ is the L-shaped $k$-cell numbered lattices given in Table 2.1 and Fig. 2.1. $\mathcal{I}_{2,3}=\left\{6k+1,6k+5\right\}_{k=0}^{\infty}$ is given in (\ref{eqn:2.4}), and then $\mathbb{N}= \underset{i\in \mathcal{I}_{2,3}}{\bigcup}i\mathbb{M}_{2,3}$ given in (\ref{eqn:2.5}) is obtained.

Recall $iM_{k}$, $k\in\{1,2,3,4\}$, as follows.

\begin{equation*}
\begin{array}{c}
\psfrag{a}{$iM_{1}$ }
\psfrag{b}{$iM_{2}$ }
\psfrag{c}{$iM_{3}$ }
\psfrag{d}{$iM_{4}$ }
\psfrag{e}{$i$ }
\psfrag{f}{$2i$ }
\psfrag{g}{$3i$ }
\psfrag{h}{$4i$ }
\includegraphics[scale=1.0]{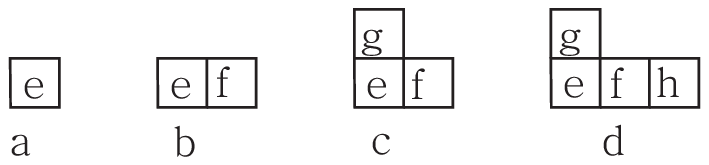}
\end{array}
\end{equation*}
\begin{equation*}
\text{Figure 4.1.}
\end{equation*}

Two sets $M$ and $M'$ of natural integers are called consecutive or connected if there exist $m\in M$ and $m'\in M'$ such that

\begin{equation}\label{eqn:4.6}
|m-m'|=1.
\end{equation}
First, we prove the consecutive results for $iM_{k}$, $i\in\mathcal{I}_{2,3}$ and $1\leq k\leq4$.

\begin{lemma}
\label{Lemma:4.1}
For any $l\geq 0$, the following pairs are consecutive:

\begin{equation*}
\begin{array}{l}
(1) \hspace{0.2cm} \text{(i)} \hspace{0.1cm}(6l+1)M_{2}  \text{ and }  (12l+1)M_{1}, \hspace{0.3cm} \text{(ii)} \hspace{0.1cm}(6l+5)M_{2}  \text{ and }  (12l+11)M_{1}, \\
\\
(2) \hspace{0.2cm} \text{(i)} \hspace{0.1cm}(24l+5)M_{3} \text{ and }  (36l+7)M_{2}, \hspace{0.3cm} \text{(ii)} \hspace{0.1cm}(24l+11)M_{3}  \text{ and }  (36l+17)M_{2} \\
 \\
 \hspace{0.5cm} \text{(iii)} \hspace{0.1cm}(24l+13)M_{3}  \text{ and }  (36l+19)M_{2},  \hspace{0.3cm} \text{(iv)} \hspace{0.1cm}(24l+19)M_{3}  \text{ and }  (36l+29)M_{2},\\
 \\
 (3) \hspace{0.2cm}\text{(i)} \hspace{0.1cm}(18l+1)M_{4}  \text{ and }   (72l+5)M_{1},  \hspace{0.3cm}\text{(ii)} \hspace{0.1cm}(18l+17)M_{4}  \text{ and }  (72l+67)M_{1}, \\
 \\
 (4) \hspace{0.2cm} \text{(i)} \hspace{0.1cm}(54l+23)M_{4}  \text{ and } (72l+31)M_{3},  \hspace{0.3cm}\text{(ii)} \hspace{0.1cm}(54l+31)M_{4}  \text{ and }  (72l+41)M_{3}.\\
\end{array}
\end{equation*}
%
%
%
%
%
%

\end{lemma}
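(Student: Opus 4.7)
The lemma is a family of explicit arithmetic assertions, so my plan is a direct element-by-element verification, one pair at a time. The key preliminary observation is that the first four members of $\mathbb{M}_{2,3}$ are $1,2,3,4$, so for the small values $k=1,2,3,4$ the lattices collapse to arithmetic progressions:
\begin{equation*}
M_{1}=\{1\},\quad M_{2}=\{1,2\},\quad M_{3}=\{1,2,3\},\quad M_{4}=\{1,2,3,4\},
\end{equation*}
and therefore $iM_{k}=\{i,2i,\ldots,ki\}$. To prove that $iM_{a}$ and $jM_{b}$ are consecutive it suffices to exhibit integers $1\leq \alpha\leq a$, $1\leq \beta\leq b$ with $|\alpha i-\beta j|=1$.

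For each of the ten listed pairs I would produce the witness by matching the two largest elements of the lattices. In group~(1), where $M_{1}$ appears, the witness is the top element of $iM_{2}$: one has $2(6l+1)=12l+2$ adjacent to $12l+1$, and $2(6l+5)=12l+10$ adjacent to $12l+11$. In group~(2) the witness pairs the top of $iM_{3}$ with the top of $jM_{2}$: for instance $3(24l+5)=72l+15$ and $2(36l+7)=72l+14$, and the three remaining cases in this group are identical in form. In group~(3) the witness pairs the top of $iM_{4}$ with the single element of $jM_{1}$: $4(18l+1)=72l+4$ lies beside $72l+5$, and $4(18l+17)=72l+68$ lies beside $72l+67$. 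In group~(4) we pair the top of $iM_{4}$ with the top of $jM_{3}$: $4(54l+23)=216l+92$ and $3(72l+31)=216l+93$, with (ii) handled the same way. Each line is a one-step check.

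Alongside these witnesses I must also verify that all multipliers appearing in the statement actually belong to $\mathcal{I}_{2,3}=\{6k+1,6k+5\}_{k\geq 0}$, i.e.\ are coprime to $6$. This reduces to a residue computation modulo~$6$ for each listed expression, e.g.\ $12l+1\equiv 1$, $36l+17\equiv 5$, $24l+19\equiv 1$, and $72l+67\equiv 1\pmod{6}$, all immediate. There is no conceptual obstacle anywhere in the argument; the entire lemma is bookkeeping around the Diophantine identity $\alpha i-\beta j=\pm 1$ between extremal vertices of two L-shaped lattices. The only thing that requires care is organizing the ten cases, which is best done in a small table indexed by the group and by the residue class modulo~$6$ of the relevant multiplier in $\mathcal{I}_{2,3}$, so that the witness and the coprimality check for each case can be read off at a glance.
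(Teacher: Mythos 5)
Your proposal is correct and follows essentially the same route as the paper: the paper's proof also reduces each pair to the single Diophantine check $|ik-jl|=1$ between the extremal elements $iq_{k}$ and $jq_{l}$ (using $q_{1},q_{2},q_{3},q_{4}=1,2,3,4$) and declares the ten verifications straightforward. Your added residue-mod-$6$ check that every multiplier lies in $\mathcal{I}_{2,3}$ is a harmless extra not present in the paper.
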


\begin{proof}
That all pairs $iM_{k}$ and $jM_{l}$ satisfy

\begin{equation}\label{eqn:4.7}
|ik-jl|=1
\end{equation}
can be straightforwardly verified; consecutiveness follows.
\end{proof}

The consequence of Lemma 4.1 is that $\left\{iM_{4}\right\}_{i\in \mathcal{I}_{2,3}}$ are tied closely. The following proposition is asserted.

\begin{proposition}
\label{Proposition:4.2}
For each $i\in\mathcal{I}_{2,3}$, a decreasing finite sequence $\{i_{k}\}_{k=0}^{N}\subset \mathcal{I}_{2,3}$ with $i_{0}=i$ and $i_{N}=1$ exists such that $i_{k}M_{4}$ and $i_{k+1}M_{4}$ are consecutive
for each $0\leq k\leq N$.

\end{proposition}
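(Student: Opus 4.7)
The plan is to prove this by strong induction on $i\in\mathcal{I}_{2,3}$. The base case $i=1$ is trivial (take $N=0$ and the singleton sequence). For the inductive step, given $i>1$ in $\mathcal{I}_{2,3}$, I would locate some $j\in\mathcal{I}_{2,3}$ with $j<i$ such that $iM_{4}$ and $jM_{4}$ are consecutive; applying the induction hypothesis at $j$ then supplies a decreasing chain from $j$ down to $1$, and prepending $i$ finishes the chain for $i$.

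A preliminary reduction streamlines the use of Lemma 4.1: from Table 2.1 one reads $M_4=\{1,2,3,4\}$, so $iM_4=\{i,2i,3i,4i\}$, and $iM_r\subseteq iM_4$ whenever $1\le r\le 4$. Consequently every consecutive pair of the form $iM_r,\,jM_s$ appearing in Lemma 4.1 lifts at once to a consecutive pair $iM_4,\,jM_4$, so all those adjacencies are available at the $M_4$-level even though the lemma is stated for various smaller lattices.

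The heart of the argument is then a finite residue check modulo $72$. The set $\mathcal{I}_{2,3}$ meets exactly $24$ residue classes mod $72$, and I claim the families in Lemma 4.1 cover them all. Parts (1)(i) and (1)(ii) handle the $12$ residues with $i\equiv 1,\,11\pmod{12}$; parts (2)(i)--(2)(iv) handle the $8$ further residues $i\equiv 7,17,19,29\pmod{36}$; and parts (3)(i), (3)(ii), (4)(i), (4)(ii) handle the remaining four residues $i\equiv 5,67,31,41\pmod{72}$. In every case the partner $j$ produced by Lemma 4.1 is manifestly an element of $\mathcal{I}_{2,3}$ strictly less than $i$ (for instance $6l+1<12l+1$ for $l\ge 1$, $24l+5<36l+7$ for $l\ge 0$, $18l+1<72l+5$ and $54l+23<72l+31$, etc.), and the smallest values $i=5,7,11,\dots$ are disposed of by taking $l=0$ in the relevant case.

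The only substantive obstacle is the coverage assertion just made. Parts (3) and (4) of Lemma 4.1 genuinely require the extra vertex $4i\in iM_4$ that is absent from $iM_3$, and it is exactly these parts that mop up the residues $5,31,41,67\pmod{72}$ missed by parts (1) and (2); at the $M_3$-level (or smaller) the descent would have gaps. Once the residue bookkeeping has been verified — the bulk of the work being routine arithmetic of the kind already carried out in the proof of Lemma 4.1 — the strong induction runs on its own, and the resulting decreasing chain from $i$ to $1$ has length $O(\log i)$.
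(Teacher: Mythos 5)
Your proposal is correct and follows essentially the same route as the paper: both arguments reduce to showing that every residue class of $\mathcal{I}_{2,3}$ (the paper works mod $12$ and then refines $12m+5$, $12m+7$ mod $72$; you count $12+8+4=24$ classes mod $72$ directly) admits a strictly smaller consecutive partner via Lemma 4.1, and then descend by induction. Your explicit observation that $iM_{r}\subseteq iM_{4}$ for $r\leq 4$, so that adjacencies stated in Lemma 4.1 for smaller lattices lift to the $M_{4}$ level, is left implicit in the paper but is the same mechanism.
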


\begin{proof}
It is easy to verified that
\begin{equation}\label{eqn:4.8}
\begin{array}{rl}
\mathcal{I}_{2,3}= & \left\{6k+1,6k+5\right\}_{k=0}^{\infty} \\
& \\
=& \left\{12m+1,12m+5,12m+7,12m+11\right\}_{m=0}^{\infty}.
\end{array}
\end{equation}
Now,

\begin{equation}\label{eqn:4.9}
\begin{array}{rl}
 &\left\{12m+5\right\}_{m=0}^{\infty}  \\
& \\
=& \left\{72l+5,72l+17,72l+29,72l+41,72l+53,72l+65\right\}_{l=0}^{\infty}.
\end{array}
\end{equation}
and
\begin{equation}\label{eqn:4.10}
\begin{array}{rl}
 &\left\{12m+7\right\}_{m=0}^{\infty}  \\
& \\
=& \left\{72l+7,72l+19,72l+31,72l+43,72l+55,72l+67\right\}_{l=0}^{\infty}.
\end{array}
\end{equation}
Firstly, we show that for any $i\in\mathcal{I}_{2,3}$, $iM_{4}$ is consecutive to $i_{1}M_{_{4}}$ for some $i_{1}\in\mathcal{I}_{2,3}$ with $i_{1}< i$ by taking one of the pairs which appear in (1)$\sim$(4) in Lemma 4.1.

Indeed, if $i=12m+1$ or $i=12m+11$, then $iM_{1}$ is consecutive to $jM_{2}$ by choosing $j=6m+1$ or $j=6m+5$ in (1) (i) and (ii), respectively. If $i=12m+5$, then (4.9) and Lemma 4.1 implies $(72l+5)M_{1}$, $(72l+17)M_{2}$, $(72l+53)M_{2}$, $(72l+29)M_{2}$, $(72l+65)M_{2}$ and $(72l+41)M_{3}$ can be consecutive to $jM_{q}$ which appears in (2), (3) and (4) of Lemma 4.1. Similar results hold for $i=12m+7$ by using (4.10) and the results in (2), (3) and (4) of Lemma 4.1.

By induction, there is a finite sequence $i_{2}> i_{3}> \cdots > i_{N}=1$ such tat $i_{k}\in \mathcal{I}_{2,3}$ with $i_{k}M_{4}$ and $i_{k+1}M_{4}$ are consecutive for each $4\leq k\leq N$. The proof is complete.

\end{proof}

The consecutive diagram of $\{iM_{4}\}$ for $i\in [1,19]\cap \mathcal{I}_{2,3}$ is drawn in Fig. 4.2 (i). Furthermore, the projection of consecutive diagram of $\{iM_{4}\}$ in $\mathcal{I}_{2,3}$  for $i\in [1,41]\cap\mathcal{I}_{2,3}$ is drawn in Fig. 4.2 (ii).

\begin{equation*}
\begin{array}{c}
\includegraphics[scale=0.6]{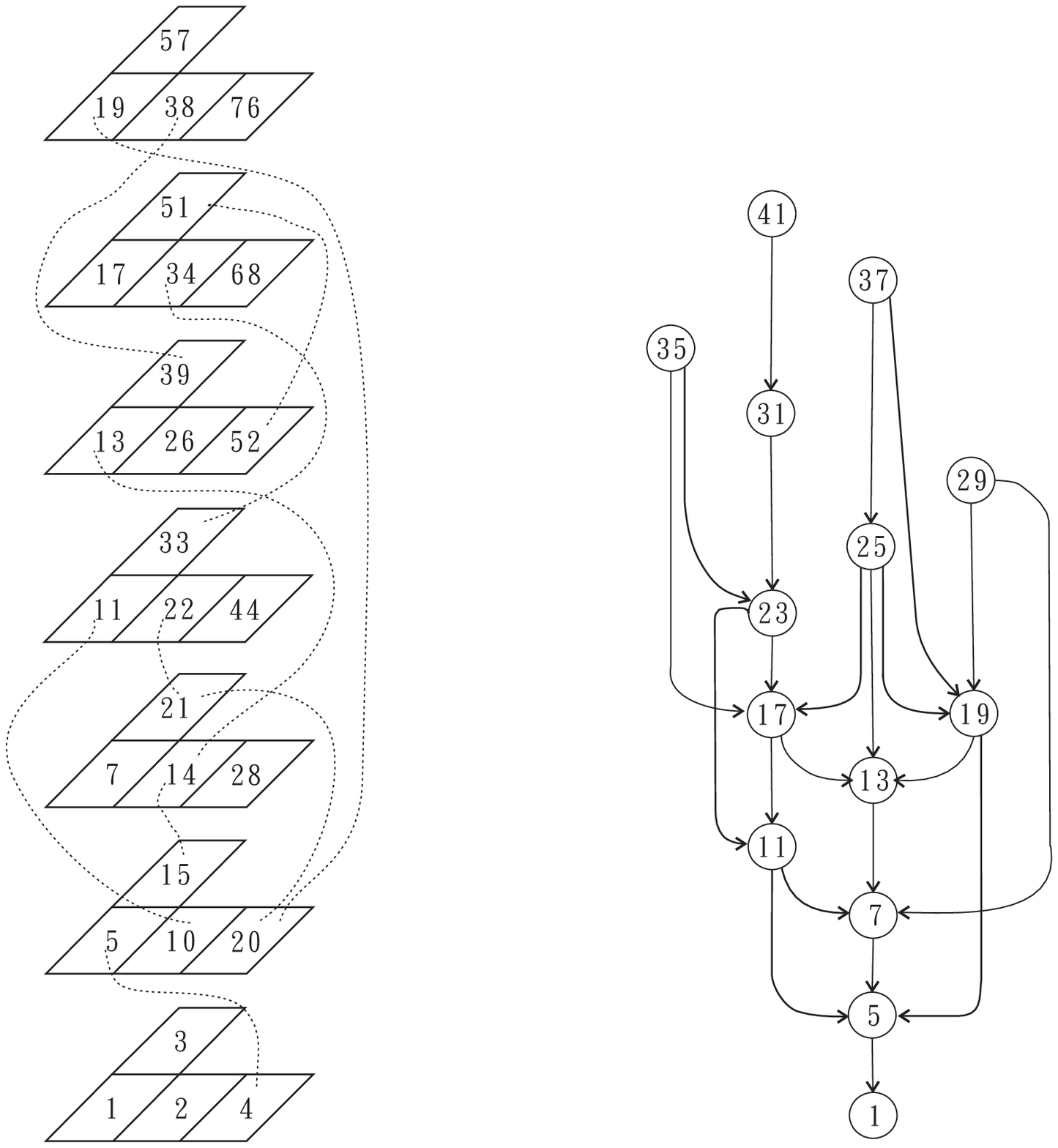}
\end{array}
\end{equation*}
\begin{equation*}
\hspace{-0.5cm}\text{Figure 4.2 (i).} \hspace{4.0cm}\text{Figure 4.2 (ii).}
\end{equation*}

In the following, the consecutiveness of $iM_{4}$, $i\in\mathcal{I}_{2,3}$, is considered further. Indeed, $iM_{4}$ looks like an octopus with eight arms.

\begin{equation*}
\begin{array}{c}
\psfrag{a}{$i$ }
\psfrag{b}{$2i$ }
\psfrag{c}{$4i$ }
\psfrag{d}{$3i$ }
\psfrag{e}{{\footnotesize$i-1$ }}
\psfrag{f}{{\footnotesize$i+1$ }}
\psfrag{g}{{\scriptsize$3i-1$ }}
\psfrag{h}{{\scriptsize$3i+1$ }}
\psfrag{k}{{\scriptsize$2i-1$ }}
\psfrag{l}{{\scriptsize$2i+1$ }}
\psfrag{m}{{\scriptsize$4i+1$ }}
\psfrag{n}{{\scriptsize$4i-1$ }}
\includegraphics[scale=0.8]{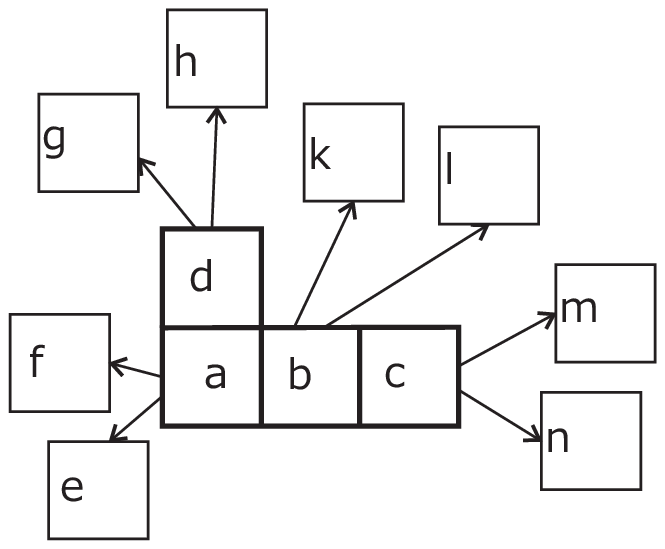}
\end{array}
\end{equation*}
\begin{equation*}
\text{Figure 4.3.}
\end{equation*}

Whether these arms can reach some $jM_{4}$ is of interest, as a determination would reveal which elements in $\{i\pm 1,2i\pm 1,3i\pm 1,4i\pm 1,\}$ belong to $l\mathcal{I}_{2,3}$ with $l\in \{1,2,3,4\}$.
The following proposition provides the answers.

\begin{proposition}
\label{Proposition:4.3}
For any $k\geq 0$,

\item[(A)] $i=6k+1$,
\begin{equation*}
\begin{array}{ll}
\text{(I)}\hspace{0.2cm} \text{(i)} \hspace{0.2cm} \text{if }k=2l, i+1\in 2\mathcal{I}_{2,3} & \text{(ii)} \hspace{0.2cm} \text{if }k=4l+3, i+1\in 4\mathcal{I}_{2,3}\\
& \\
\text{(II)}\hspace{0.2cm} \text{(i)}\hspace{0.2cm} \text{(a)}   \hspace{0.2cm} \text{if }k=3l, 2i+1\in 3\mathcal{I}_{2,3} & \text{(b)} \hspace{0.2cm} \text{if }k=3l+1, 2i+1\in 3\mathcal{I}_{2,3} \\
\hspace{0.7cm} \text{(ii)}\hspace{0.2cm} \text{(a)}   \hspace{0.2cm} \text{if }k=3l, 2i-1\in \mathcal{I}_{2,3} & \text{(b)} \hspace{0.2cm} \text{if }k=3l+1, 2i-1\in \mathcal{I}_{2,3} \\
& \\
\text{(III)}\hspace{0.2cm} \text{(i)} \hspace{0.2cm} \text{if }k=2l+1, 3i+1\in 2\mathcal{I}_{2,3} & \\
\hspace{0.9cm} \text{(ii)}\hspace{0.2cm} \text{(a)}   \hspace{0.2cm} \text{if }k=2l, 3i-1\in 2\mathcal{I}_{2,3} & \text{(b)} \hspace{0.2cm} \text{if }k=4l+1, 3i-1\in 4\mathcal{I}_{2,3} \\
& \\
\text{(IV)}\hspace{0.2cm} \text{(i)} \hspace{0.2cm} 4i+1\in \mathcal{I}_{2,3} & \\
\hspace{0.8cm} \text{(ii)}\hspace{0.2cm} \text{(a)}   \hspace{0.2cm} \text{if }k=3l, 4i-1\in 3\mathcal{I}_{2,3} & \text{(b)} \hspace{0.2cm} \text{if }k=3l+2, 4i-1\in 3\mathcal{I}_{2,3} \\
\end{array}
\end{equation*}

\item[(B)] $i=6k+5$,
\begin{equation*}
\begin{array}{ll}
\text{(I)}\hspace{0.2cm} \text{(i)} \hspace{0.2cm} \text{if }k=2l+1, i-1\in 2\mathcal{I}_{2,3} & \text{(ii)} \hspace{0.2cm} \text{if }k=4l, i-1\in 4\mathcal{I}_{2,3}\\
& \\
\text{(II)}\hspace{0.2cm} \text{(i)}\hspace{0.2cm}  2i+1\in \mathcal{I}_{2,3} &  \\
\hspace{0.7cm} \text{(ii)}\hspace{0.2cm} \text{(a)}   \hspace{0.2cm} \text{if }k=3l+1, 2i-1\in 3\mathcal{I}_{2,3} & \text{(b)} \hspace{0.2cm} \text{if }k=3l+2, 2i-1\in 3\mathcal{I}_{2,3} \\
& \\
\text{(III)}\hspace{0.2cm} \text{(i)}\hspace{0.2cm} \text{(a)}   \hspace{0.2cm} \text{if }k=4l+2, 3i+1\in 4\mathcal{I}_{2,3} & \text{(b)} \hspace{0.2cm} \text{if }k=2l+1, 3i+1\in 2\mathcal{I}_{2,3}  \\
\hspace{0.8cm} \text{(ii)}\hspace{0.2cm} \text{(a)}   \hspace{0.2cm} \text{if }k=2l, 3i-1\in 2\mathcal{I}_{2,3} & \text{(b)} \hspace{0.2cm} \text{if }k=4l+3, 3i-1\in 4\mathcal{I}_{2,3} \\
& \\
\text{(IV)}\hspace{0.2cm}\text{(i)}\hspace{0.2cm} \text{(a)}   \hspace{0.2cm} \text{if }k=3l, 4i+1\in 3\mathcal{I}_{2,3} & \text{(b)} \hspace{0.2cm} \text{if }k=3l+2, 4i+1\in 3\mathcal{I}_{2,3}\\
\hspace{0.8cm}  \text{(ii)} \hspace{0.2cm} 4i-1\in \mathcal{I}_{2,3}. & \\
\\
\end{array}
\end{equation*}

\end{proposition}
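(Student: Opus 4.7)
The approach is elementary and direct: the membership assertions reduce to checking divisibility together with a residue condition modulo $6$. Observe that $n \in l\mathcal{I}_{2,3}$ if and only if $l \mid n$ and $n/l \equiv 1$ or $5 \pmod 6$. Hence the entire proof amounts to substituting the specified parametrization of $k$ into $i = 6k+1$ or $i = 6k+5$, computing the relevant expression from the list $\{i \pm 1,\, 2i \pm 1,\, 3i \pm 1,\, 4i \pm 1\}$, extracting the factor $l$, and verifying that the cofactor is congruent to $1$ or $5$ modulo $6$.

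I will illustrate the template on a few representative cells. For (A)(I)(i), $i = 6k+1$ with $k = 2l$ gives $i = 12l+1$, so $i+1 = 2(6l+1)$ and $6l+1 \equiv 1 \pmod 6$, whence $i+1 \in 2\mathcal{I}_{2,3}$. For (A)(I)(ii), $k = 4l+3$ gives $i = 24l+19$ and $i+1 = 4(6l+5)$ with $6l+5 \in \mathcal{I}_{2,3}$. For (A)(II)(i)(a), $k = 3l$ gives $i = 18l+1$ and $2i+1 = 3(12l+1)$ with $12l+1 \equiv 1 \pmod 6$. For (A)(IV)(i), any $k$ yields $4i+1 = 24k + 5 \equiv 5 \pmod 6$, so $4i + 1 \in \mathcal{I}_{2,3}$ directly. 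For (B)(IV)(ii), $4i - 1 = 24k + 19 \equiv 1 \pmod 6$, giving $4i - 1 \in \mathcal{I}_{2,3}$. Each of the remaining subcases yields to the same one-line argument.

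The only real obstacle is clerical volume: parts (A) and (B) together contain sixteen subcases. The hypotheses on $k \pmod{2}$, $k \pmod 3$, or $k \pmod 4$ are precisely tuned to guarantee simultaneously that the claimed $l \in \{1,2,3,4\}$ exactly divides the arm expression (no higher power of $2$ or $3$ slips in) and that the resulting quotient avoids the forbidden residues $0, 2, 3, 4 \pmod 6$. A systematic way to organize the write-up is to tabulate the eight arm expressions against the two branches $i \equiv 1,\, 5 \pmod 6$, and to fill each cell with its one-line residue computation as above; no conceptually new ingredient is needed beyond the congruence bookkeeping.
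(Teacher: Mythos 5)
Your proposal is correct and matches the paper's intent: the paper simply states that the proof is straightforward and omits it, and your residue-and-divisibility template (checking that $n\in l\mathcal{I}_{2,3}$ iff $l\mid n$ and $n/l\equiv 1,5\pmod 6$) is exactly that straightforward verification. The sample cases you compute are all accurate, and the remaining subcases do reduce to the same one-line congruence check.
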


\begin{proof}
The proof is straightforward and omitted.

\end{proof}

From Proposition 4.3, there are 12 cases for each $i=6k+1$ and $i=6k+5$; see the following Table 4.1. Notably, in Tables 4.1, (i) and (ii) are symmetric in $lj\pm1$ of $12m+j$ and  $lj\mp1$ of $12n+(12-j-1)$.

For $i=6k+1$,
\begin{equation*}
\begin{tabular}{|l|c|c|c|c|c|c|c|c|c|c|}
\hline
$k$ &  $4i+1$ & $4i-1$ &  $3i+1$ & $3i-1$ &  $2i+1$ & $2i-1$ &  $i+1$ & $i-1$  & \text{up} & {\footnotesize\text{down}}  \\ \hline
$12m$ & $ \mathcal{I}_{2,3}$    & $3 \mathcal{I}_{2,3}$  &$4 \mathcal{I}_{2,3}$    &  $2 \mathcal{I}_{2,3}$  &  $3 \mathcal{I}_{2,3}$ & $ \mathcal{I}_{2,3}$  & $2 \mathcal{I}_{2,3}$&   & 4 & 3\\ \hline
$12m+1$ & $ \mathcal{I}_{2,3}$  &                         & $2 \mathcal{I}_{2,3}$  &  $4 \mathcal{I}_{2,3}$  &   $3 \mathcal{I}_{2,3}$    & $\mathcal{I}_{2,3}$    &                       & & 3 & 2 \\ \hline
$12m+2$ &$ \mathcal{I}_{2,3}$   & $3 \mathcal{I}_{2,3}$    &                        &  $2 \mathcal{I}_{2,3}$    &                        & $\mathcal{I}_{2,3}$    & $ 2 \mathcal{I}_{2,3}$                     &  & 4 & 1\\ \hline
$12m+3$ &$ \mathcal{I}_{2,3}$   & $3 \mathcal{I}_{2,3}$    & $2 \mathcal{I}_{2,3}$ &                         &    $3 \mathcal{I}_{2,3}$    & $\mathcal{I}_{2,3}$     & $ 4 \mathcal{I}_{2,3}$    &  & 4 &2 \\ \hline
$12m+4$ & $ \mathcal{I}_{2,3}$  &                           & $4 \mathcal{I}_{2,3}$   &  $2 \mathcal{I}_{2,3}$   &  $3 \mathcal{I}_{2,3}$     &$ \mathcal{I}_{2,3}$       & $2 \mathcal{I}_{2,3}$  &  & 3 & 3\\ \hline
$12m+5$ &$ \mathcal{I}_{2,3}$   &$3 \mathcal{I}_{2,3}$    & $2 \mathcal{I}_{2,3}$  &   $4 \mathcal{I}_{2,3}$   &                        &   $ \mathcal{I}_{2,3}$  &                       &  & 4 & 1\\ \hline
$12m+6$ & $ \mathcal{I}_{2,3}$  & $3 \mathcal{I}_{2,3}$   &                        &   $2 \mathcal{I}_{2,3}$   &  $3 \mathcal{I}_{2,3}$    &  $\mathcal{I}_{2,3}$     & $2 \mathcal{I}_{2,3}$    &  & 4 & 2\\ \hline
$12m+7$ & $ \mathcal{I}_{2,3}$  &                          & $ 2\mathcal{I}_{2,3}$   &                         &    $3 \mathcal{I}_{2,3}$  &   $\mathcal{I}_{2,3}$  &   $4 \mathcal{I}_{2,3}$   & & 3 & 2 \\ \hline
$12m+8$ &$ \mathcal{I}_{2,3}$   & $3 \mathcal{I}_{2,3}$   &  $4 \mathcal{I}_{2,3}$   &   $2 \mathcal{I}_{2,3}$   &                        &    $\mathcal{I}_{2,3}$   & $2 \mathcal{I}_{2,3}$  & & 4 & 2 \\ \hline
$12m+9$ &$ \mathcal{I}_{2,3}$   & $3 \mathcal{I}_{2,3}$   &  $2 \mathcal{I}_{2,3}$   &  $4 \mathcal{I}_{2,3}$  &    $3 \mathcal{I}_{2,3}$  &  $\mathcal{I}_{2,3}$  &                       & & 4 & 2\\ \hline
$12m+10$ & $ \mathcal{I}_{2,3}$  &                         &                        &    $2 \mathcal{I}_{2,3}$ &   $3 \mathcal{I}_{2,3}$   & $  \mathcal{I}_{2,3}$  &  $2 \mathcal{I}_{2,3}$  &  & 3 & 2\\ \hline
$12m+11$ &$ \mathcal{I}_{2,3}$ &$3 \mathcal{I}_{2,3}$     &  $2 \mathcal{I}_{2,3}$   &                         &                        &  $   \mathcal{I}_{2,3}$ &    $4 \mathcal{I}_{2,3}$  & & 4 &1 \\ \hline
\end{tabular}%
\end{equation*}

\begin{equation*}
\begin{array}{c}
\text{up: the number of upward arms } \\
\text{down: the number of downward arms}\\
\\
\text{Table 4.1 (i).}
\end{array}
\end{equation*}
and for $i=6k+5$,

\begin{equation*}
\begin{tabular}{|l|c|c|c|c|c|c|c|c|c|c|}
\hline
$k$ &  $4i+1$ & $4i-1$ &  $3i+1$ & $3i-1$ &  $2i+1$ & $2i-1$ &  $i+1$ & $i-1$ & \text{up}  & {\footnotesize\text{down}}  \\ \hline
$12n$ & $ 3\mathcal{I}_{2,3}$     & $ \mathcal{I}_{2,3}$    &                        &  $2 \mathcal{I}_{2,3}$  &  $ \mathcal{I}_{2,3}$ &                          &  & $4 \mathcal{I}_{2,3}$ & 4 & 1\\ \hline
$12n+1$ & $                  $    &  $ \mathcal{I}_{2,3}$   & $2 \mathcal{I}_{2,3}$  &                            &   $ \mathcal{I}_{2,3}$    & $3\mathcal{I}_{2,3}$ &   & $2 \mathcal{I}_{2,3}$  & 3& 2 \\ \hline
$12n+2$ &$ 3\mathcal{I}_{2,3}$    & $ \mathcal{I}_{2,3}$    & $4 \mathcal{I}_{2,3}$  &  $2 \mathcal{I}_{2,3}$    &   $ \mathcal{I}_{2,3}$ & $3\mathcal{I}_{2,3}$    & &                    & 4 &2 \\ \hline
$12n+3$ &$ 3\mathcal{I}_{2,3}$    & $ \mathcal{I}_{2,3}$    & $2 \mathcal{I}_{2,3}$  &  $4 \mathcal{I}_{2,3}$   &    $ \mathcal{I}_{2,3}$    &                       & & $ 2 \mathcal{I}_{2,3}$    & 4 & 2 \\ \hline
$12n+4$ & $                 $     &  $ \mathcal{I}_{2,3}$   &                        &  $2 \mathcal{I}_{2,3}$   &  $ \mathcal{I}_{2,3}$     &$ 3\mathcal{I}_{2,3}$    &   & $4 \mathcal{I}_{2,3}$  & 3 & 2 \\ \hline
$12n+5$ &$ 3\mathcal{I}_{2,3}$    &$ \mathcal{I}_{2,3}$     & $2 \mathcal{I}_{2,3}$  &                        &    $ \mathcal{I}_{2,3}$ &   $ 3\mathcal{I}_{2,3}$  &  & $2 \mathcal{I}_{2,3}$ & 4 &2 \\ \hline
$12n+6$ & $ 3\mathcal{I}_{2,3}$   & $ \mathcal{I}_{2,3}$    &  $4 \mathcal{I}_{2,3}$ &   $2 \mathcal{I}_{2,3}$   &  $ \mathcal{I}_{2,3}$    &                      &  &         & 4 &1 \\ \hline
$12n+7$ & $                   $   &   $ \mathcal{I}_{2,3}$  & $ 2\mathcal{I}_{2,3}$  &$4 \mathcal{I}_{2,3}$      &    $ \mathcal{I}_{2,3}$  &  3 $\mathcal{I}_{2,3}$  & &   $2\mathcal{I}_{2,3}$   & 3 & 3  \\ \hline
$12n+8$ &$ 3\mathcal{I}_{2,3}$    & $ \mathcal{I}_{2,3}$    &                        &   $2 \mathcal{I}_{2,3}$   &    $ \mathcal{I}_{2,3}$ &    3$\mathcal{I}_{2,3}$  & & $4 \mathcal{I}_{2,3}$  & 4 & 2  \\ \hline
$12n+9$ &$ 3\mathcal{I}_{2,3}$    & $ \mathcal{I}_{2,3}$    &  $2 \mathcal{I}_{2,3}$ &                             &    $ \mathcal{I}_{2,3}$  &                        &  & $2 \mathcal{I}_{2,3}$  & 4 & 1  \\ \hline
$12n+10$ & $                 $    &   $ \mathcal{I}_{2,3}$  & $4 \mathcal{I}_{2,3}$  & $2 \mathcal{I}_{2,3}$    &   $\mathcal{I}_{2,3}$   & $ 3 \mathcal{I}_{2,3}$  &  &               & 3 &2   \\\hline
$12n+11$ &$ 3\mathcal{I}_{2,3}$   &$ \mathcal{I}_{2,3}$     &  $2 \mathcal{I}_{2,3}$  &  $4 \mathcal{I}_{2,3}$    &  $ \mathcal{I}_{2,3}$  &  $  3 \mathcal{I}_{2,3}$ & &   $2 \mathcal{I}_{2,3}$ & 4 &3 \\ \hline
\end{tabular}%
\end{equation*}

\begin{equation*}
\text{Table 4.1 (ii).}
\end{equation*}

The following examples are five graphs that have the different numbers of upward arms and downward arms in Table 4.1.

\begin{example}
\label{Example:4.4}

\begin{equation*}
\begin{array}{lll}
& & \\
\begin{array}{c}
\psfrag{a}{{\footnotesize $293$}}
\psfrag{b}{{\footnotesize$145$}}
\psfrag{c}{{\footnotesize$109\cdot 2$}}
\psfrag{d}{{\footnotesize$97 \cdot 3$}}
\psfrag{e}{{\footnotesize $37\cdot 2$}}
\psfrag{f}{{\footnotesize$49\cdot 3$}}
\psfrag{g}{{\footnotesize$55\cdot 4$}}
\includegraphics[scale=0.8]{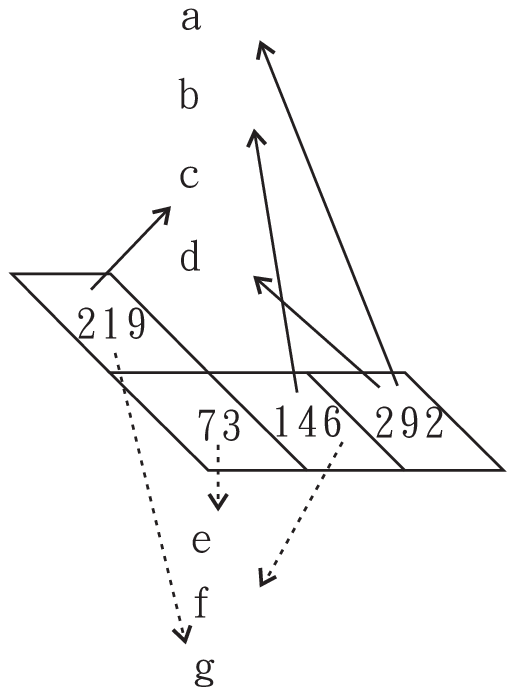}
\end{array}
&&
\begin{array}{c}
\psfrag{a}{{\footnotesize $77$}}
\psfrag{b}{{\footnotesize$37$}}
\psfrag{c}{{\footnotesize$29\cdot 2$}}
\psfrag{d}{{\footnotesize$25 \cdot 3$}}
\psfrag{e}{{\footnotesize $13\cdot 3$}}
\psfrag{f}{{\footnotesize$5\cdot 4$}}
\includegraphics[scale=0.8]{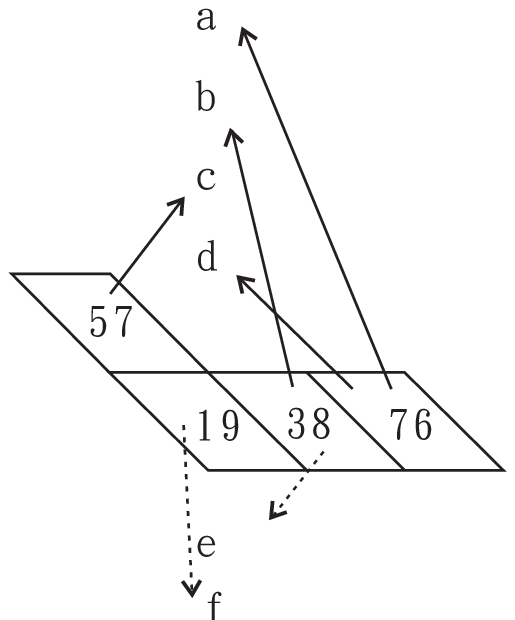}
\end{array} \\
& & \\
\text{(a) 4 upward arms and 3 downward arms} && \text{(b)  4 upward arms and 2 downward arms} \\
& & \\
\begin{array}{c}
\psfrag{a}{{\footnotesize $101$}}
\psfrag{b}{{\footnotesize$49$}}
\psfrag{c}{{\footnotesize$37\cdot 2$}}
\psfrag{d}{{\footnotesize$19 \cdot 4$}}
\psfrag{e}{{\footnotesize $17\cdot 3$}}
\psfrag{f}{{\footnotesize$13\cdot 2$}}
\includegraphics[scale=0.8]{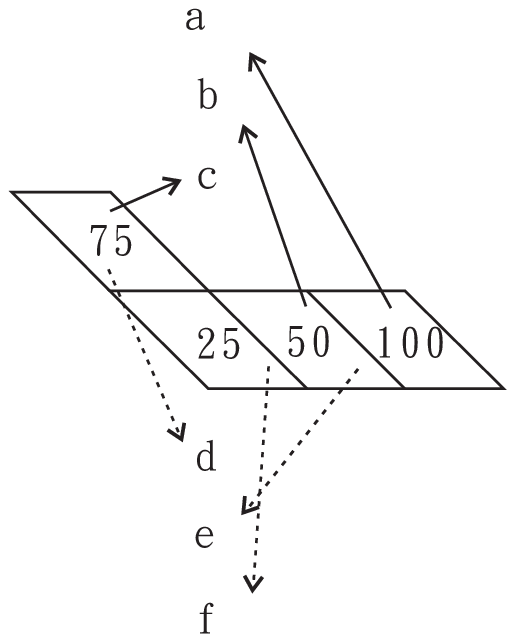}
\end{array}
&&
\begin{array}{c}
\psfrag{a}{{\footnotesize $19$}}
\psfrag{b}{{\footnotesize$11$}}
\psfrag{c}{{\scriptsize$7\cdot 2,7\cdot 3 $}}
\psfrag{d}{{\footnotesize$1 \cdot 4$}}
\includegraphics[scale=0.8]{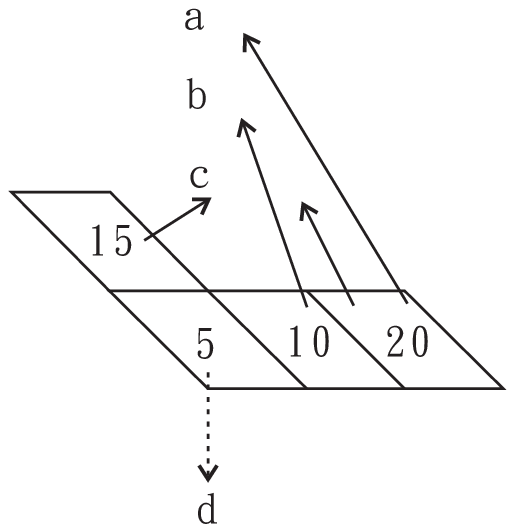}
\end{array} \\
& &\\
\text{(c) 3 upward arms and 3 downward arms} && \text{(d)  4 upward arms and 1 downward arm} \\
& &\\
\begin{array}{c}
\psfrag{a}{{\footnotesize $29$}}
\psfrag{b}{{\footnotesize$13$}}
\psfrag{c}{{\footnotesize$11\cdot 2$}}
\psfrag{d}{{\scriptsize$5 \cdot 4,5\cdot3$}}
\includegraphics[scale=0.8]{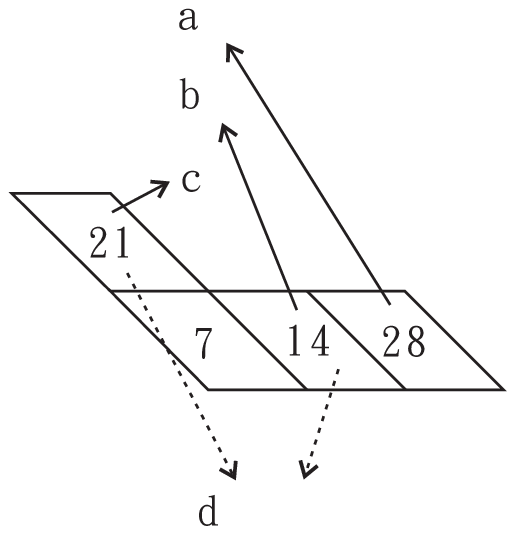}
\end{array}  & & \\
& &\\
\text{(e)  3 upward arms and 2 downward arms} &&  \\
\end{array}
\end{equation*}

\begin{equation*}
\text{Figure 4.4.}
\end{equation*}

\end{example}

Now, consider the splitting procedure (II) again. For the one-dimensional coupled system $\mathbb{X}_{2}^{A}$, the two-dimensional graph in Fig. 3.1 looks like a bamboo-blind. The horizontal lines $i\mathcal{I}_{2}$ are the bamboo sticks and the zigzag lines are the strings that tie the sticks together. For each $k\geq 1$, taking $\{iM_{k}\}_{i\in\mathcal{I}_{2}}$, the blind falls apart into infinitely-many pieces. These disjoint pieces are mutually independent and are used to estimate the lower bound of the entropy.

In the two-dimensional coupled system $\mathbb{X}_{2,3}^{A}$, for each $k\geq4$, the numbers in $G_{k}\equiv \left\{iM_{k}\right\}_{i\in\mathcal{I}_{2,3}}$ are taken as the vertices; the horizontal edges are segments in $iM_{k}$, and the vertical edges are segments that connect the consecutive numbers in $G_{k}$; see Fig. 4.2 (i) for $k=4$ and $1\leq i\leq 19$. Figure 4.2 (ii) is a projection of the graph in $\mathcal{I}_{2,3}$ for $k=4$ and $1\leq i\leq 41$.
From Propositions 4.2 and 4.3, for each $k\geq 4$, $G_{k}$ is a fully connected graph. This paper does not find any means to split $G_{k}$ into two parts:

\begin{equation*}
G_{k}=U_{k} \bigcup W_{k},
\end{equation*}
where $U_{k}$ is the set of mutually independent subgraphs of $G_{k}$, and $W_{k}=G_{k}\setminus U_{k}$, so (4.3) holds.

From Propositions 4.2 and 4.3, for $k\geq 4$, the connections among the vertices in $G_{k}$ are quite complicated. Then, the topology of $G_{k}$ is far away from $L_{k}\times \mathbb{Z}^{1}$, a standard three-dimensional lattice. Previous results concerning pattern generation problems cannot apply successfully \cite{5,6,7}. A better understanding of $G_{k}$ is required before dealing with $\mathbb{X}_{2,3}^{A}$.

%
%
%
%
%

%

\begin{equation*}
\end{equation*}

\textbf{Acknowledgement.} The numerical results of this paper are provided by Mr. Hung-Shiun Chen, a Ph.D. student of Song-Sun Lin at National Chiao Tung University.

\end{document}